\documentclass[a4paper,10pt]{amsart}
\usepackage{amsmath,amsthm,amssymb,tikz,calc,graphicx}
\usepackage[latin1]{inputenc}
\usepackage[T1]{fontenc}
\usepackage[all]{xy}
\usepackage{mathrsfs,pifont}
\usepackage{booktabs}
\usepackage{array}
\usepackage{longtable}
\usepackage{placeins}

\newtheorem{thm}[equation]{Theorem}

\newtheorem*{thm*}{Theorem}

\newtheorem{cor}[equation]{Corollary}
\newtheorem{lem}[equation]{Lemma}
\newtheorem{prop}[equation]{Proposition}

\theoremstyle{definition}

\newtheorem{rem}[equation]{Remark}

\newtheorem{defn-lem}[equation]{Definition-Lemma}
\newtheorem{rmk}[equation]{Remark}
\newtheorem*{rem*}{Remark}

\numberwithin{equation}{section}

\usepackage{bm}
\usepackage{upgreek}
\makeatletter
\newcommand{\bfgreek}[1]{\bm{\@nameuse{up#1}}}

\newcommand{\triv}{{\textbf{1}}}
\def\R{\mathbb R}

\def\Z{\mathbb Z}
\def\A{\mathbb A}
\def\Q{\mathbb Q}
\def\C{\mathbb C}

\def\EE{\mathcal E}
\def\cF{\mathcal F}
\def\Res{{\rm Res}}

\def\hra{\hookrightarrow}
\def\ra{\rightarrow}

\def\g{\mathfrak g}

\def\n{\mathfrak n}
\def\a{\mathfrak a}

\def\q{\mathfrak q}

\def\h{\mathfrak h}

\def\m{\mathfrak m}
\def\k{\mathfrak k}
\def\l{\mathfrak l}

\def\u{\mathfrak u}

\def\ww{\mathsf w}

\def\<{\langle}
\def\>{\rangle}

\def\J{\mathcal J}

\def\Hom{{\rm Hom}}

\def\GL{{\rm GL}}

\def\vv{{\sf v}} 

\makeatletter
\DeclareRobustCommand\smallop[2][1]{%
	\mathop{\vphantom{\oplus}\mathpalette\smallop@{{#1}{#2}}}\slimits@
}
\newcommand{\smallop@}[2]{\smallop@@#1#2}
\newcommand{\smallop@@}[3]{%
	\vcenter{%
		\sbox\z@{$#1\oplus$}%
		\hbox{\resizebox{\ifx#1\displaystyle#2\fi\dimexpr\ht\z@+\dp\z@}{!}{$\m@th#3$}}%
	}%
}
\makeatother

\makeatletter
\DeclareRobustCommand\bigop[2][1]{%
	\mathop{\vphantom{\bigoplus}\mathpalette\bigop@{{#1}{#2}}}\slimits@
}
\newcommand{\bigop@}[2]{\bigop@@#1#2}
\newcommand{\bigop@@}[3]{%
	\vcenter{%
		\sbox\z@{$#1\bigoplus$}%
		\hbox{\resizebox{\ifx#1\displaystyle#2\fi\dimexpr\ht\z@+\dp\z@}{!}{$\m@th#3$}}%
	}%
}
\makeatother

\newcommand{\bigtopdirsum}{%
	\tikz[baseline]{\draw[thick] 
		(-.025,-1ex) -- ++(0,3.2ex) 
		(.025,-1ex) -- ++(0,3.2ex) 
		(0,-.025)++(-1.6ex,.6ex) -- ++(3.2ex,0) 
		(0,.025)++(-1.6ex,.6ex) -- ++(3.2ex,0) 
		(0,.6ex) circle (1.6ex) }%
}

\newcommand{\bigtoplus}{\DOTSB\bigop[.86]{\bigtopdirsum}}

\begin{document}

\title{On the residual Eisenstein cohomology of unitary groups}
\author{Harald Grobner}
\thanks{The author has been supported by the research projects PAT-4628923 and PAT-2584625 of the Austrian Science Fund (FWF), the EU NextGeneration project (IIP\_UNIPU\_010159) and the Croatian Science Foundation project (HRZZ-IP-2022-10-4615).}
\address{Harald Grobner, Faculty of Mathematics, University of Vienna, Oskar-Morgenstern-Platz 1, A-1090 Vienna, Austria}
\email{harald.grobner@univie.ac.at}

\keywords{Eisenstein cohomology, unitary group, residues, residual representation, automorphic forms, arithmetic group}
\subjclass[2010]{11F70, 22E55}
\date{\today}

\maketitle

 
\begin{abstract}
We investigate the residual Eisenstein cohomology of an arbitrary unitary group $U(V)$ attached to an arbitrary quadratic extension of number fields $E/F$. Our focus lies on the contribution of the maximal parabolic $F$-subgroups of $U(V)$, for which we identify the cohomologically relevant poles of Eisenstein series and prove that the resulting residues all survive as non-trivial classes in automorphic cohomology in an explicit degree. To illustrate the range of phenomena involved, we study in detail the case of a unitary group over $F=\Q(\sqrt[3]{2})$ of $F$-rank $3$, for which we explicitly construct cuspidal automorphic representations, which satisfy all the assumptions of our main theorem and hence explicitly construct non-zero residual Eisenstein cohomology classes for this unitary group. The methods used in this construction are paradigmatic however, i.e., generalize to other unitary groups over other ground fields $F$ by the use of base change.
\end{abstract}

\vskip 10pt

\setcounter{tocdepth}{2}
\tableofcontents

\section*{Introduction}

\noindent The cohomology of arithmetic groups is a focal point of the global Langlands program. Here, one not only sees the program's key assertion -- namely, that Galois theory of number fields $F$, the geometry of motives, and the representation theory of automorphic forms are just three sides of the same (miraculous) coin -- demonstrated with remarkable clarity and (in the author's opinion) even simplicity, but even gains insight into {\it why} these symmetries of the theory should and must exist.\\\\
If the considered reductive group $G/F$ gives rise to non-compact such arithmetic quotients, i.e., if its $F$-rank is positive, then the existence of non-compactly supported cohomology classes reflects a deep principle: Geometrically speaking, it says that there must be algebraic differential forms, which restrict non-trivially to the boundary of the Borel-Serre compactification; automorphically, it says that there must be non-cuspidal cohomology classes, constructed via Eisenstein series. Again, both perspectives are intimately linked: Performing the summation process when forming an Eisenstein series just corresponds geometrically to lifting a class on the boundary to the whole compactified arithmetic quotient by performing the obvious averaging.\\\\ In some sense closest to cuspidal classes are those Eisenstein cohomology classes, which are still representable by square-integrable automorphic forms (just as the cuspidal forms are), i.e., those generated by residues of Eisenstein series, so called {\it residual Eisenstein cohomology classes}. By results of Borel, \cite{bor1}, the constant functions serve as a trivial example of such square-integrable, yet, residual forms. But while the contribution of constant forms to automorphic cohomology is relatively (!) well understood, cf.\ \cite{franke2}, the contribution of non-constant residual automorphic representations is much more delicate and still in large parts mysterious (nevertheless, see \cite{grob22}, \cite{rosp}, \cite{grobner-EisRes} and the recent \cite{mundy} for general results): Determining whether or not such automorphic residues actually give non-zero cohomology classes in automorphic cohomology requires a {\it simultaneous} control of the poles of Eisenstein series and of the restriction of the resulting classes to the Borel--Serre boundary, i.e., knowledge of whether or not their relevant constant terms still produce non-zero cohomology classes. \\\\
This paper carries out this analysis for unitary groups in a form which is
uniform in the ground field. Let \(E/F\) be an arbitrary quadratic
extension of number fields and let
$$
        G_n=U(V_n)
$$
be the unitary group attached to some non-degenerate $n$-dimensional  hermitian \(E\)-space
\(V_n\). We assume that \(G_n\) has positive \(F\)-rank $rk_F(G_n)\geq 1$, as otherwise there cannot be any non-cuspidal automorphic forms, but we impose no further
restriction on $V_n$ (in particular, we impose no restrictions on $V_n$ at the archimedean places) and no restriction
on \(F\). For a standard maximal \(F\)-parabolic subgroup \(P_k\) with
Levi factor
$$
        L_k\cong \operatorname{Res}_{E/F}\GL_k\times G_{n-2k},
$$
we consider cuspidal automorphic representations (more precisely, their associate classes $\varphi_{P_k}$)
$$
        \pi=\tau\widehat\otimes\sigma
$$
on \(L_k(\mathbb A_F)\), where \(\tau\) is a unitary cuspidal automorphic
representation of \(\GL_k(\mathbb A_E)\), and \(\sigma\) is a unitary
cohomological cuspidal automorphic representation of
\(G_{n-2k}(\mathbb A_F)\) with generic base change $BC(\sigma)$, together with a point $\lambda_0$ in the closed positive Weyl chamber of the split component of $L_k$. 
\\\\
Our first result, see Thm.\ \ref{thm:Poles}, is a pole criterion in the cohomological range. Writing $\lambda_0$ in the basis given by the global absolute value of the determinant on the $\GL_k$-factor, the only residual values which occur in the range relevant for cohomology are at the coordinates
$$
        s=\tfrac12
        \qquad\text{and}\qquad
        s=1 .
$$
At \(s=\frac12\), the simple pole is governed by $\tau$ being conjugate self-dual and \(\epsilon\)-distinguished, and the non-vanishing of the central
Rankin--Selberg value
$$
        L\!\left(\tfrac12,\tau\times BC(\sigma)^\vee\right).
$$
At \(s=1\), the simple pole occurs when \(\tau\) is one of the
cuspidal isobaric summands of the base change \(BC(\sigma)\). In both cases the above conditions -- to be explained with care and all details left out here in Thm.\ \ref{thm:Poles} -- are necessary and sufficient, hence a characterization, for the existence of the pole at $\lambda_0$ of the Eisenstein series in question.\\\\
Our second result, which is our main theorem, shows that these poles are genuinely cohomological, i.e., the attached residues all represent non-zero cohomology classes in the space of automorphic cohomology with respect to a previously fixed, but arbitrary finite-dimensional coefficient module $\EE_\mu$ of $G_\infty$. More precisely, for the associate class $\varphi_{P_k}$, represented by $\pi$ and $\lambda_0$, we consider the degree
$$
        q(\pi_{\lambda_0}):
        =
        q_{min}(\pi_\infty)
        +[F:\mathbb Q]\,k(2n-3k)-\ell(w),
$$
where $\ell(w)$ is the length of the unique Kostant representative \(w\in W^{P_k}\) determined by $\pi_\infty$ and $\lambda_0$ and $q_{min}(\pi_\infty)$ denotes the minimal degree of non-zero cohomology of $\pi_\infty$. (The latter is made completely explicit in Prop.\ \ref{prop:qminLk}, to which we refer the reader, who is interested in a detailled but lengthy formula.) Our main theorem now says that whenever the pole criterion above is satisfied at \(\lambda_0\), the resulting residues of the Eisenstein series all give rise to {\it non-zero} residual Eisenstein cohomology classes in degree \(q(\pi_{\lambda_0})\), i.e., yield non-zero classes in the relative Lie algebra cohomology of the space of $\EE_\mu$-valued automorphic forms $\mathcal A(G)\otimes\EE_\mu$: More precisely, it shows that the natural map 
$$H^{q(\pi_{\lambda_0})}(\g,K_G,\mathcal A_{res,\varphi_{P_k}}(G)\otimes\EE_\mu)\longrightarrow H^{q(\pi_{\lambda_0})}(\g,K_G,\mathcal A(G)\otimes\EE_\mu)$$
from the cohomology of the direct sum $\mathcal A_{res,\varphi_{P_k}}(G)$ of {\it all} the residual automorphic representations supported in the associate class $\varphi_{P_k}$ of $\pi$ and $\lambda_0$ to the space of automorphic cohomology is {\it injective}. We refer to Thm.\ \ref{thm:main} for all details, omitted here for sake of brevity, and an entirely precise statement.\\\\
Our third result produces actual examples of cohomological cuspidal automorphic representations $\pi=\tau\widehat\otimes\sigma$, which satisfy all criteria and assumptions to be made, in order make our above main theorem applicable. Constructing cuspidal automorphic representation with prescribed local and global behaviour (in particular, if the local components in question are not in the discrete series) in an unconditional way is a delicate task, which we carry out explicitly in our role model case of a quasisplit unitary group $U(V_6)$ over the field $F=\Q(\sqrt[3]{2})$. We refer to Thm.\ \ref{prop:nonempty-P2-example} for this result, which provides and unconditionally constructs non-zero residual Eisenstein cohomology classes in degree $q(\pi_{\lambda_0})=14$ with respect to trivial coefficients $\EE_\mu=\C$. The example is not merely illustrative, but paradigmatic: It shows that the hypotheses of the general theorem are simultaneously realizable even in a genuinely mixed archimedean situation (as in this example $U(V_6)_\infty\cong U(3,3)\times \GL_6(\C)$). Moreover, the methods used in our construction, which go back to fundamental ideas of Clozel \cite{clozelSLN}, certainly generalize to other unitary groups over other algebraic ground fields $F$, using base change \cite{mok, KMSW, zou}.\\\\
Our paper should also be seen in the context of the existing literature on Eisenstein cohomology for unitary groups, which contains several important results, notably for low-rank families over \(\mathbb Q\): See \cite{harderGU} for Harder's foundational work on $GU(2,1)/\Q$, \cite{grabschwun} for \(U(n,1)/\mathbb Q\), \cite{hayataschwermer} for \(SU(2,2)/\mathbb Q\), and more recently \cite{Ber-vdG22} for work on Eisenstein cohomology of $GU(2,1)/\Q(\sqrt{-3})$,  \cite{nairrai} for work related to \(SU(n,1)/\mathbb Q\) and \cite{bajcav}, again for the case of $SU(2,1)/\Q$. In contrast, the present paper isolates the residual mechanism for maximal parabolic subgroups in a setting which allows an arbitrary quadratic extension \(E/F\) of number fields, arbitrary \(F\)-rank and unitary groups attached to arbitrary non-degenerate hermitian forms (including, of course, arbitrary behaviour at the archimedean places). In particular, our setup uses no {\it Shimura-type} or signature hypothesis, but properly subsumes all of the latter.\\\\
On a final note, one reason for keeping our construction as explicit as possible is the connection of residual Eisenstein cohomology classes with Harder-type Eisenstein congruences. After the rational strucutures on $L^2$-automorphic cohomology, introduced in \cite{nairrai}, have been refined to integral structures, such classes are natural candidates for congruences between residual and cuspidal cohomology classes. In this sense, our paper provides the complex input for a study of Eisenstein congruences for unitary groups in the spirit of Harder's approach. We hope to return to these arithmetic aspects in future work.\\\\
\small
\noindent{\it Acknowledgments.}\enlargethispage{1cm}
This paper was completed after the passing of G\"unter Harder in 2025. Since Eisenstein cohomology is inseparable from Harder's work on the cohomology of arithmetic groups, it seems appropriate to record our gratitude for the perspective opened by his ideas, which G\"unter continued to share with characteristic generosity.
\normalsize

\section{Preliminaries on unitary groups and notation}\label{sect:preliminaries}

\subsection{Number fields}\label{sect:nf}
Throughout this paper $F$ denotes an algebraic number field, i.e., an arbitrary finite extension of $\Q$, and $E/F$ is an arbitrary quadratic extension with non-trivial Galois automorphism denoted by $c$. We write $S=S(F)$ for the set of places $\vv$ of $F$ and $S_\infty=S_\infty(F)$ (resp.\ $S_f = S_f(F)$) for its subset of archimedean (resp.\ non-archimdean) places. It disintegrates into $S_\infty=S_\R\cup S_\C$, where $S_\R$ denotes the subset of real places $\vv$, corresponding to the real field embeddings $\iota_\vv: F\hra\R$ and $S_\C$ denotes the subset of complex places $\vv$, corresponding to the pairs $\{\iota_\vv,\overline{\iota}_\vv\}$ of complex-conjugate non-real field embeddings $\iota_\vv: F\hra\C$. Write $S^{\sf sp}_\R:=\{\vv\in S_\R \ | \ E\otimes_{F}F_\vv\cong \R\times\R\}$ resp.\ $S^{\sf in}_\R:=\{\vv\in S_\R \ | \ E\otimes_{F}F_\vv\cong \C\}$ for the subsets of real places, which split in $E$, resp.\ stay inert in $E$. We will write $\Sigma$ (resp.\ $\Sigma_\infty$, resp.\ $\Sigma_f$) for the set of (archimedean, resp.\ non-archimedean) places $\ww$ of $E$. Then $S^{\sf in}_\R$ naturally identifies with a subset of $\Sigma_\infty$. If $\vv$ (resp.\ $\ww$) is a place of $F$ (resp.\ $E$), then we write $|\cdot|_\vv$ (resp.\ $|\cdot|_\ww$) for the respective normalized absolute value. \\\\
Let $\A_F$ (resp. $\A_E$) be the locally compact topological ring of ad\'eles of $F$ (resp.\ $E$). The corresponding absolute value of $\A_F$ (resp. $\A_E$) is denoted $\|\cdot\|_F:=\prod_{\vv\in S}|\cdot|_\vv$ (resp.\ $\|\cdot\|_E:=\prod_{\ww\in\Sigma}|\cdot|_\ww$). We let $\varepsilon_{E/F}: F^*\backslash\A^*_F\ra\C^*$ be the quadratic Hecke character associated with $E/F$ by class field theory and, given some $n,k\geq 0$, we define a quadratic character of $\GL_k(\A_F)$ by 
$$\epsilon:=\left\{\begin{array}{ll}
 \varepsilon_{E/F}\circ\textrm{det}_{\GL_k} & \textrm{if $n$ is even} \\
 \triv_{\GL_k(\A_F)} & \textrm{if $n$ is odd,}
\end{array}
\right.$$
suppressing its dependence on $n,k$ in the notation. Here, $\triv_H$ stands for the trivial character of a given subgroup $H$ of $\GL_k(\A_E)$. Also, we may and will choose a unitary conjugate self-dual Hecke character \(\eta_{E/F} :E^*\backslash \A^*_E\ra\C^*\), which extends $\varepsilon_{E/F}$, i.e., such that $\left.\eta_{E/F}\right|_{\mathbb A_F^\times}=\varepsilon_{E/F}$. One may arrange that at all complex places $\ww\in\Sigma_\infty$ it is of the form $z\mapsto z^{u}\bar{z}^{-u}$ with $u\in\tfrac12+\Z$. This follows analogously to \cite[\S 6.9.2]{bel-chen}.

\subsection{Algebraic groups}\label{sect:paras} 
Let $(V_n,\<\cdot,\cdot\>)$ be a non-degenerate $c$-hermitian form on an $E$-vector space $V_n$ of dimension $n\geq 1$ and denote by
$$
G:=G_n:=U(V_n),
$$
the attached unitary group. It is a connected reductive linear algebraic group over $F$ whose $R$-points for any commutative $F$-algebra $R$ are given by
$$G(R)=\{g\in {\rm Aut}_{E\otimes_F R}(V\otimes_F R) \ | \ \langle gv, gw\rangle=\langle v,w\rangle \}\subset \GL_n(R).$$
We abbreviate $r:=rk_F(G)$ for its $F$-rank, which we assume to be non-zero, i.e., $V$ that is isotropic, in hindsight to the existence of non-cuspidal (e.g., residual) automorphic representations of $G(\A_F)$.\\\\ 
We fix once and for all a minimal parabolic $F$-subgroup $P_0$ of $G$ with with Levi decomposition $P_0=L_0N_0$ and let $A_0$ be the maximal $F$-split torus in the center $Z_{L_0}$ of $L_0$. This choice defines the standard parabolic $F$-subgroups $P$ with Levi decomposition $P=L_PN_P$, where $L_P\supseteq L_0$ and $N_P\subseteq N_0$. We let $A_P$ be the maximal $F$-split torus in the center $Z_{L_P}$ of $L_P$, satisfying $A_P\subseteq A_0$. Every such $F$-parabolic $P$ is the stabilizer of a flag of totally isotropic $E$-subspaces $\{0\}\subseteq X_1\subset... \subset X_{t_P}\subseteq V$ and choosing a compatible Witt decomposition $V=X\oplus V_{\ell_P}\oplus X^\vee$ over $E$, one hence gets 
\begin{equation}\label{eq:LP}
L_P\cong \prod_{i=1}^{t_P}\Res_{E/F}\GL_{k_i}\times U(V_{\ell_P})
\end{equation}
where $k_i=\dim_E X_i/X_{i-1}$ and $\ell_P=\dim_EV_{\ell_P}$, whence $n=\ell_P +2\sum_{i=1}^{t_P} k_i$. The number $t_P$ of factors of general linear groups here equals the parabolic rank of $P$, i.e., the dimension of $\check\a_P:=X^*(A_P)\otimes_\Z\R$ or, likewise, $\a_P:=X_*(A_P)\otimes_\Z\R$ as an $\R$-vector space, where $X^*$ (resp. $X_*$) denotes the group of $F$-rational characters (resp. co-characters). This entails that $A_G$ is trivial, which can also be seen be noting that the center of $G$ is $Z_G\cong U(V_1)=\ker (N_{E/F}: \Res_{E/F}(\mathbb G)\ra\mathbb G)$. Our assumption that $r=rk_F(G)\geq 1$ is hence equivalent to assuming that all arithmetic subgroups of $G(F)$ are non-cocompact, though of finite covolume in $\Res_{F/\Q}(G)(\R)$, cf.\ \cite{serrearith,
 borelhch, borelbook}.\\\\ 
 We fix once and for all a choice of a maximal compact subgroup $K_{\A_F}=\prod_{\vv\in S_F}K_\vv$ of $G(\A_F)$, which is in good position with respect to our fixed choice of standard parabolic $F$-subgroups, cf. \cite{moewal} I.1.4 for the standard (resp.\ \cite{grob_book}, \S 9.2 for a more detailed) reference. With respect to $K_{\A_F}$ we obtain for each parabolic $F$-subgroup $P$ the {\it Harish-Chandra height function} $H_P: G(\A_F)\ra\a_P$, cf.\ \cite{grob_book}, p.\ 120. The kernel $L(\A_F)^{(1)}$ of $H_P|_{L(\A)}$ is naturally complemented within $L(\A)$ by a connected, central Lie group, $A^\R_P\cong \R_{>0}^{t_P}$ with Lie algebra isomorphic to $\a_P$. The resulting quotients $A^\R_P L(F)\backslash L(\A)$ have finite invariant volume. See \cite{grob_book}, Prop.\ 9.11 for this claim and further, more original references.\\\\ 
We write $\{P\}$ for the {\it associate class of $P$}, i.e., the set of all standard parabolic $F$-subgroups $Q$ of $G$, whose Levi subgroup $L_Q$ is conjugate via $G(F)$ to $L_P$. As the opposite parabolic $\overline P$ of $P$ is the stabilizer of the opposite flag, every standard parabolic $F$-subgroup $P$ of $G$ is {\it self-associate}, i.e., conjugate to $\overline P$ by an element of $G(F)$. Consequently, $\{P\}$ consist only of $P$ itself, if $P$ is maximal parabolic.\\\\ A standard parabolic $F$-subgroup $P$ acts on its unipotent radical $N_P$ by the adjoint representation. The weights $\alpha$ of this action with respect to the torus $A_P$ are denoted $\Delta(P,A_P)$ and $\rho_P$ denotes the half-sum of these weights, counted with their multiplicity $m(\alpha)$ in $\n_P=Lie(N_P)$ (to ensure local-global compatibility of modulus characters). We will not distinguish between $\rho_P$ and its derivative, so we may also view $\rho_P$ as an element of $\check\a_P$. In particular, $\Delta^+_F:=\Delta(P_0,A_0)$ defines a choice of positive $F$-roots $\Delta_F$ of $G$ and we let $\Delta^0_F:=\{\alpha_1,...,\alpha_r\}$ be its basis of simple $F$-roots. Explicitly, the maximal $F$-split torus may be written as
$$A_0=\left\{\operatorname{diag}(t_1,\dots,t_r, id_{n-2r},c(t_1)^{-1},\dots,c(t_r)^{-1})\right\},$$
and let $\varepsilon_i\in X^\ast(A_0)$ be given by $\varepsilon_i(a)=t_i$ for $a\in A_0$. Then
$$
\Delta_F^+ =
\{\varepsilon_i\pm\varepsilon_j \ 1\leq i<j\leq r\}
\;\cup\;
\{\varepsilon_i \ 1\leq i\leq r\}
\;\cup\;
\{2\varepsilon_i\ 1\leq i\leq r\},
$$
with multiplicities in $\n_{0}$ given by
$$
m(\varepsilon_i\pm\varepsilon_j)=2,\qquad
m(\varepsilon_i)=2(n-2r)=2\ell_{P_0},\qquad
m(2\varepsilon_i)=1.
$$
Consequently, the abstract root system $\Delta_F$ is of reduced type $C_{r}$ if $\ell_{P_0}=0$ and of non-reduced type $BC_{r}$ otherwise. With respect to $\Delta^+_F$, we shall use the notation $\check\a_P^{+}$ (resp.\ $\overline{\check\a_P^{+}}$) for the open (resp.\ closed) positive Weyl chamber in $\check\a_P$.\\\\ The standard parabolic $F$-subgroups correspond one-to-one to subsets $\theta\subseteq\Delta^0_F$, characterized by the property that $\alpha\in\Delta^0_F\setminus \theta$ are precisely the simple $F$-roots, which do not vanish on $A_P$. If $P$ is maximal, i.e., $t_P=1$, there is hence precisely one such simple root $\alpha_k$, $1\leq k\leq r$, and we write $P=P_k$ with Levi subgroup $L_k \cong  \Res_{E/F}\GL_{k}\times G_\ell $ with $\ell=\ell_k=n-2k$, whose $F$-points may be viewed as

\begin{equation}\label{eq:LPk}
L_k(F)\cong
\left\{
\begin{pmatrix}
h & 0 & 0 \\
0 & u & 0 \\
0 & 0 & ({}^{c}h^t)^{-1}
\end{pmatrix}
:\;
h \in \GL_k(E),\; u \in G_{n-2k}(F)
\right\}.
\end{equation}
Therefore, 
$$\rho_{P_k}=(n-k)\,(\varepsilon_1+\cdots+\varepsilon_k).$$ 
Since the $F$-rational characters of $L_k$ are generated by $N_{E/F}\circ \det_{\GL_k/E}$ as a $\Z$-module, a generator of  $X^*(L_k)$ is given by $\gamma_k:=2(\varepsilon_1+\cdots+\varepsilon_k)$ and it follows that 
\begin{equation}\label{halfint}
s\cdot\gamma_k\in \tfrac12 X^*(L_k)\Rightarrow s\in\tfrac12\Z,
\end{equation} 
Going adelic and identifying an element $g\in L_k(\A_F)$ with a tuple $(h,u)\in\GL_k(\A_E)\times G_{n-2k}(\A_F)$ by \eqref{eq:LPk}, the above becomes $\|\det(h)\|^s_E=e^{\langle s\gamma_k,H_{P_k}(g)\rangle}$, for all $s\in\C$.\\\\
As all standard parabolic $F$-subgroups of $G$ are self-associate, all maximal parabolics $P_k$ (cf.\ \cite{moewal}, Lem.\ I.4.10, resp.\ their Remark, p.\ 162) serve as a potential candidate for a parabolic support of a residual automorphic representation of $G(\A_F)$.

\subsection{Real Lie groups and algebras}\label{sect:rlgrps}
The real locus $G_\infty:=\Res_{F/\Q}(G)(\R)$ of $G$ is given by 
\begin{equation}\label{eq:Ginfty}
G_\infty=\prod_{\vv\in S_\infty}G(F_{\vv})\cong \prod_{\vv\in S^{\sf in}_\R} U(p_\vv,q_\vv) \times \prod_{\vv\in S^{\sf sp}_\R} \GL_n(\R) \times \prod_{\vv\in S_\C} \GL_n(\C)\end{equation}
where for $\vv \in S^{\sf in}_\R$ we denoted by $(p_\vv,q_\vv)$, $p_\vv\geq q_\vv\geq 0$, the signature of the complex-conjugate hermitian form induced by $\<\cdot,\cdot\>$ on the $\C$-vector space $V_\vv := V\otimes_{E,\imath_\vv} \C$ and by $U(p_\vv,q_\vv)$ the attached real unitary group. It is a {\it real reductive} Lie group (see \cite{grob_book}, Rem.\ 2.1 for a precise positioning of this notion) whose component group satisfies $|\pi_0(G_\infty)|=2^{|S^{\sf sp}_\R|}$. The maximal compact subgroup $K_\infty$ of $G_\infty$, which is given by our choice of $K_{\A_F}$, fits into this by  
$$K_{\infty}= \prod_{\vv\in S_\infty}K_{\vv}\cong \prod_{\vv\in S^{\sf in}_\R}\left( U(p_\vv)\times U(q_\vv)\right) \times \prod_{\vv\in S^{\sf sp}_\R} O(n) \times \prod_{\vv\in S_\C}U(n).$$ Here, for any $m\geq 1$, we abbreviated as usual $U(m)=U(m,0)$ for the compact real unitary group of rank $m$. For any Lie subgroup $H$ of $G_\infty$, we let $K_{H}:=K^\circ_\infty\cap H$, so $K_G$ simply denotes the connected component of the identity matrix in $K_\infty$. It equals $K_\infty$ if and only if there is no split real place of $F$.\\\\
Lower case gothic letters denote the Lie algebra of the corresponding real Lie group (e.g., $\g_{\infty}=Lie(G_\infty)$, $\g_{\vv}=Lie(G(F_\vv))$, $\k_{H}=Lie(K_H)$, $\a_{P,\infty}=Lie(A_{P,\infty})$, etc.\ ...), and we add a subscript ``$\C$'' to denote its complexification (e.g., $\g_{\infty,\C}=\g_\infty\otimes_\R\C$). The real Lie algebra $\check\a_P$ (resp.\ $\a_P$) is viewed as being diagonally embedded into $\check\a_{P,\infty}$ (resp.\ $\a_{P,\infty}$) and we denote by $\mathcal Z(\g)$ the center of the universal enveloping algebra $\mathcal U(\g)$ of $\g_{\infty,\C}$.\\\\
Let $\h_\infty=\oplus_{\vv\in S_\infty}\h_\vv$ be a Cartan subalgebra of $\g_\infty$ that contains $\a_{0,\infty}$ (and hence all $\a_{P,\infty}$, $\a_{P,\vv}$ and $\a_P$). The choice of positivity on the set of $F$-roots of $G$ is extended to a choice of positivity on the set of absolute, complex roots $\Delta_\C:=\Delta(\g_{\infty,\C},\h_{\infty,\C})$. The half sum of the positive absolute roots is denoted $\rho=(\rho_\vv)_{\vv\in S_\infty}\in\check\h_\infty$. \\\\In this paper, we always let $\EE=\EE_\mu$ be a finite--dimensional irreducible algebraic representation of $G_\infty$ on a complex vector space which we assume to be given by a dominant integral highest weight $\mu=(\mu_\vv)_{\vv\in S_\infty}\in\check\h_\infty$. In order to avoid potential confusion, we remark that as $G_\infty$ is being viewed as a {\it real} Lie group, $\mu_\vv$ has in fact two coordinate vectors $\mu_{\iota_\vv}$ and $\mu_{\bar\iota_\vv}$ at a complex place $\vv\in S_\C\subseteq S_\infty$.

\subsection{Cohomological representations}
Let $H\subseteq G_\infty$ be the group of real points of an algebraic group over $\R$, such that $K_\infty\cap H$ is maximal among the compact subgroups of $H$. We say that a representation $\pi$ of $H$ is {\it cohomological}, if there is an irreducible finite-dimensionale algebraic representation $\cF$ of $H$ on a complex vector space, such that $H^*(\h, K^\circ_H,\pi\otimes \cF)\neq \{0\}$, i.e, that its relative Lie algebra cohomolgy does not vanish $H^*(\h, \k_H,\pi\otimes \cF)\neq \{0\}$: See \ \cite{bowa}, I.1.2 and I.5.2. We let $q_{min}(\pi)$ be the minimal degree, in which $\pi$ has non-vanishing $(\h, K^\circ_H)$-cohomology, or, equivalently $(\h, \k_H)$-)cohomology.\\\\In particular, a representation $\pi_\infty$ of $G_\infty$ is cohomological, if there is a highest weight module $\EE_\mu$ as above, such that $H^*(\g_{\infty},K_{G},\pi_\infty\otimes \EE_\mu)\neq \{0\}$. If $\pi_\infty\cong\otimes_{\vv\in S_\infty}\pi_\vv$ factors into representations of $G_\vv$ over the archimedean places, the K\"unneth-rule shows that the condition for $\pi_\infty$ of being cohomological is a purely local one, to be treated for each individual $\vv\in S_\infty$, cf.\ \ \cite{bowa}, I.5.1. Hence, in view of \eqref{eq:Ginfty}, we may consider the cases of real unitary groups $U(p,q)$ and of real and complex general linear groups $\GL_m(\R)$ and $\GL_m(\C)$ separately. The cohomological irreducible unitary representations, generally classified in \cite{vozu, voganunit}, are well and explicitly known for these families of groups (at least, if one admits the use of translation functors, cf.\ \cite{knappvogan}, Chp.\ VII or \cite{bowa}, Sect.\ VI.0): See \cite{enright} (for $\GL_m(\C)$), \cite{speh, speh81}  (for $\GL_m(\R)$), \cite{voganupq, salamribsu} (for $U(p,q)$) and alternatively and complementary \cite{grob-ragh}, Sect.\ 4 (for a very explicit study of the case of $\GL_{2m}(\R)$) and \cite{nairprasad}, Sect.\ 12 (for all the above mentioned groups, featuring results phrased in the language of Arthur-Johnson-packets). We now recall the tempered representations among those:

\subsubsection{Tempered cohomological representations of real unitary groups}
Let $U(p,q)$, $p\geq q\geq 0$, be a real unitary group and let $\sigma$ be an irreducible tempered representation of $U(p,q)$, which is cohomological with respect to an irreducible finite-dimensional (algebraic) representation $\cF$. Then $\sigma$ is isomorphic to one of the ${p+q \choose p}$ inequivalent discrete series representations, having the same infinitesimal character as $\cF^\vee$ (\cite{vozu}, p.\ 58, \cite{bowa}, Thm.\ I.5.3.(ii), \cite{knappbased}, Thm.\ 9.20, Thm.\ 12.21). Its cohomology is centered in the middle-degree
$$H^d(\u(p,q),U(p)\times U(q),\sigma\otimes\cF)\cong\left\{\begin{array}{ll}
 \C & \textrm{if $d=pq$} \\
 0 & \textrm{else}
\end{array}
\right.$$
See \cite{bowa}, II Thm.\ 5.4 or \cite{voganupq}, Thm.\ 3.2. The minimal degree, in which $\sigma$ has non-trivial cohomology, is hence, independent of $\sigma$ and $\cF$, given by $q_{min}(\sigma)=pq$.

\subsubsection{Tempered cohomological representations of general linear groups}\label{sect:tempGLcoh}
We refer to \cite{grob-ragh}, Sect.\ 5.5, for a concise and comprehensive summary, which is tailor-made for our purposes, and to further and more original references given therein. Let $\tau$ be a cohomological irreducible tempered representation of $\GL_m(\C)$, resp.\ $\GL_m(\R)$, $m\geq 1$. By the classification of the cohomological unitary dual of general linear groups, mentioned above, this is equivalent to saying that $\tau$ is a cohomological irreducible unitary {\it generic} representation of the respective group. In fact, the latter equivalence does not need the assumption that the respective coefficient module ist algebraic: An irreducible unitary representation $\tau$, which has non-zero $(\g\l_m(\C), U(m))$- (resp.\ $(\g\l_m(\R), SO(m))$-)cohomology with respect to some irreducible finite-dimensionale representation $\cF$ on a complex vector space, is generic if and only if it is tempered. Distinguishing the complex and the real case, we obtain the following:\\\\
{\it $F=\C$:} Then $\tau$ is fully induced from $m$ pairwise distinct characters $\chi_i: \C^*\ra U(1), z\mapsto z^{a_i}\overline{z}^{-a_i}$ with $a_i\in \tfrac12 \Z$,
$$\tau\cong {\rm Ind}_{B_m(\C)}^{\GL_m(\C)}[\chi_1\otimes...\otimes\chi_m]=\chi_1\times...\times\chi_m.$$
Here, $B_m$ denotes the standard Borel subgroup of $\GL_m/\Q$ of upper triangular matrices and induction is normalized by the square-root of its modulus character to ensure that unitarity is preserved. The $(\g\l_m(\C), U(m))$-cohomology of such a $\tau$ is hence concentrated in degrees $\tfrac{m(m-1)}{2}\leq d\leq \tfrac{m(m+1)}{2}$, cf.\ \cite{bowa}, Cor.\ III.5.2.(iii). \\\\
{\it $F=\R$:} Let $a=\lfloor\frac{m}{2}\rfloor$, $b=m-2a$ and let $P\supset B_m$ be the $\R$-parabolic subgroup of $\GL_m(\R)$ with Levi factor $M=\prod_{i=1}^{a} GL_{2}(\R)\times(\R^*)^b$ (were we understand that the factor $\R^*$ only appears, if $b=1$, i.e., if $m$ is odd). Then 
$$\tau\cong\textrm{Ind}^{\GL_m(\R)}_{P(\R)}[\delta_1\otimes...\otimes\delta_{a}\otimes\epsilon \ ]=\delta_1\times...\times\delta_{a}\times\epsilon.$$
Here, the $\delta_i$ are pairwise distinct discrete series representations of $\GL_2(\R)$ for $1\leq i\leq a$ and $\epsilon$ is a power of the sign character (if it appears, i.e., if $m$ is odd). The $(\g\l_m(\R), SO(m))$-cohomology of such a $\tau$ is hence concentrated, in dependence of the parity of $m$, in degrees $\tfrac{m^2-b}{4} \leq d\leq \tfrac{m(m+2)+b}{4}$, cf.\ \cite{bowa}, Prop.\ III.5.3.\\\\
In summary, the minimal degrees of cohomology, in which an irreducible tempered representation $\tau$ of $\GL_m(\C)$, resp.\ $\GL_m(\R)$, $m\geq 1$, may have non-trivial cohomology are again independent of $\tau$ and given by
$$q_{min}(\tau)=\left\{\begin{array}{ll}
 \tfrac{m(m-1)}{2} & \textrm{for $\GL_m(\C)$} \\\\
 \big\lfloor\tfrac{m}{2}\big\rfloor\big\lceil\tfrac{m}{2}\big\rceil & \textrm{for $\GL_m(\R)$} 
\end{array}
\right.$$ 
Let now $L_k$, be the Levi subgroup of a maximal parabolic $F$-subgroup $P_k$, $1\leq k\leq r$. As a simple consequence of the above short discussion and the K\"unneth rule for realtive Lie algebra cohomology, we obtain

\begin{prop}\label{prop:qminLk}
Let $\pi_\infty=\tau_\infty\hat\otimes\sigma_\infty$ be a cohomological irreducible tempered representation of $L_{k,\infty}=(\Res_{E/\Q}\GL_n)(\R)\times (\Res_{F/\Q})(U(V_{n-2k}))(\R)$. Then
$$q_{min}(\pi_\infty)=\sum_{\vv\in S^{\sf in}_\R}(p_\vv-k)(q_\vv-k) + |S^{\sf in}_\R| \cdot\tfrac{k(k-1)}{2}+ |S^{\sf sp}_\R| \cdot\left(\big\lfloor\tfrac{n-2k}{2}\big\rfloor\big\lceil\tfrac{n-2k}{2}\big\rceil+2\big\lfloor\tfrac{k}{2}\big\rfloor\big\lceil\tfrac{k}{2}\big\rceil\right) + |S_\C| \cdot\tfrac{n^2-4kn-n+6k^2}{2}$$
\end{prop}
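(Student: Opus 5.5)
The plan is to reduce the statement to the local computations of the two preceding subsubsections by a place-by-place factorization and the K\"unneth rule. (In the statement, the first factor of $L_{k,\infty}$ should read $(\Res_{E/\Q}\GL_k)(\R)$.)

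\textbf{Step 1: factorize $L_{k,\infty}$.} Write
$$L_{k,\infty}=\prod_{\vv\in S_\infty}\GL_k(E\otimes_F F_\vv)\times U(V_{n-2k})(F_\vv).$$
I then identify each local factor according to the type of $\vv$.
\begin{itemize}
\item For $\vv\in S^{\sf in}_\R$ we have $E\otimes_F F_\vv\cong\C$, so the factor is $\GL_k(\C)\times U(p_\vv-k,q_\vv-k)$. This uses the Witt decomposition $V=X\oplus V_{n-2k}\oplus X^\vee$, in which $X\oplus X^\vee$ is split of signature $(k,k)$ at $\vv$. In particular $q_\vv\geq k$, because $k\leq r$ forces $V_\vv$ to contain a $k$-dimensional isotropic subspace. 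Hence the unitary factor is a genuine (possibly compact) $U(p_\vv-k,q_\vv-k)$.
\item For $\vv\in S^{\sf sp}_\R$ we have $E\otimes_F F_\vv\cong\R\times\R$, so the factor is $\GL_k(\R)\times\GL_k(\R)\times\GL_{n-2k}(\R)$.
\item For $\vv\in S_\C$ we have $E\otimes_F F_\vv\cong\C\times\C$, so the factor is $\GL_k(\C)\times\GL_k(\C)\times\GL_{n-2k}(\C)$.
\end{itemize}

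\textbf{Step 2: reduce to simple factors.} An irreducible tempered representation of this product is a completed tensor product of irreducible tempered representations of the simple factors. It is cohomological for some $\cF$ exactly when each tensor factor is cohomological for the corresponding tensor factor of $\cF$ (\cite{bowa}, I.5.1). The K\"unneth rule then gives
$$H^*(\l_{k,\infty},K^\circ_{L_k},\pi_\infty\otimes\cF)\cong\bigotimes H^*(\cdot,\cdot,\text{local factors}).$$
The lowest nonvanishing degree of a graded tensor product of nonzero graded spaces is the sum of the lowest degrees, so $q_{min}(\pi_\infty)$ is the sum of the $q_{min}$ of the simple factors. Since the text notes that $(\h,K^\circ_H)$- and $(\h,\k_H)$-cohomology have the same minimal degree, using connected components causes no harm.

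\textbf{Step 3: insert the local values.} I use $q_{min}=pq$ for $U(p,q)$, $\tfrac{m(m-1)}2$ for $\GL_m(\C)$, and $\lfloor\tfrac m2\rfloor\lceil\tfrac m2\rceil$ for $\GL_m(\R)$.
\begin{itemize}
\item An inert real place contributes $(p_\vv-k)(q_\vv-k)+\tfrac{k(k-1)}2$.
\item A split real place contributes $\lfloor\tfrac{n-2k}2\rfloor\lceil\tfrac{n-2k}2\rceil+2\lfloor\tfrac k2\rfloor\lceil\tfrac k2\rceil$.
\item A complex place contributes $k(k-1)+\tfrac{(n-2k)(n-2k-1)}2$, which expands to $\tfrac{n^2-4kn-n+6k^2}{2}$.
\end{itemize}
Summing over $S_\infty$ yields the formula.

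\textbf{Main obstacle.} The only delicate point is bookkeeping at inert real places, namely checking that the signature of $V_{n-2k}$ at $\vv$ is exactly $(p_\vv-k,q_\vv-k)$. This follows from the compatible Witt decomposition fixed in \S\ref{sect:paras}.
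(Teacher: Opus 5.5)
Your proposal is correct and follows the same route as the paper, which obtains the formula as an immediate consequence of the K\"unneth rule applied place by place. It inserts the local minimal degrees $pq$ for $U(p,q)$, $\tfrac{m(m-1)}{2}$ for $\GL_m(\C)$ and $\lfloor\tfrac m2\rfloor\lceil\tfrac m2\rceil$ for $\GL_m(\R)$. You also rightly note that the first factor of $L_{k,\infty}$ should be $(\Res_{E/\Q}\GL_k)(\R)$, and your signature bookkeeping $(p_\vv-k,q_\vv-k)$ via the Witt decomposition supplies a detail the paper leaves implicit.
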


\subsection{Automorphic forms for $G_n(\A_F)$ and $\GL_n(\A_E)$}
Let $\mathcal A(G)$ (resp.\ $\mathcal A^\infty(G)$) be the space of all automorphic forms (resp.\ smooth-automorphic forms) on $G(\A_F)=G_n(\A_F)$, cf.\ \cite[Sect.\ 4.5]{bojac} and \cite[Sect.\ 11.5 and Def.\ 11.10]{grob_book}, respectively. It follows from the general theory, that $\mathcal A(G)\cap L^2(G(F)\backslash G(\A_F)=\mathcal A(G)\cap L^2_{dis}(G(F)\backslash G(\A_F))$, since the continuous part of the $L^2$-spectrum does not contain non-zero $K_\infty$- and $\mathcal Z(\g)$-finite elements. Hence, the (irreducible) square-integrable automorphic representations may be identified with the (irreducible) $(\g_\infty,K_\infty, G(\A_F))$-submodules of the space of globally smooth, $K_\infty$- and $\mathcal Z(\g)$-finite elements in the discrete part of the $L^2$-spectrum, i.e., with (irreducible) $(\g_\infty,K_\infty, G(\A_F))$-submodules of $L^2_{dis}(G(F)\backslash G(\A_F))^{\infty_\A}_{(K_\infty,\mathcal Z(\g))}$. Here we used the notation of \cite{grob_book}, Thm.\ 13.12 and Cor.\ 13.20, to which we refer for a detailed account, while we recall that $A_G$ is trivial, so $G(F)\backslash G(\A_F)$ is of finite volume.\\\\ 
It will be convenient -- and in view of \cite{grob_book}, Lem.\ 15.10 justified -- to allow ourselves not to distinguish between an irreducible (unitary) $G(\A_F)$-representation on a subspace $\mathcal H$ of the Hilbert space $L^2_{dis}(G(F)\backslash G(\A_F))$, the irreducible smooth $G(\A_F)$-representation on the limit-Fr\'echet-space of its globally smooth vectors $\mathcal H^{\infty_\A}$ and its underlying irreducible $(\g_\infty,K_\infty, G(\A_F))$-module $\mathcal H^{\infty_\A}_{(K_\infty)}$ of $K_\infty$-finite vectors. In other words, we will sometimes make use of the freedom not to distinguish between an irreducible square-integrable automorphic representation of $G(\A_F)$, its smooth $LF$-space completion or its (non-smooth) Hilbert space completion in the $L^2$-spectrum. See \cite{grob_book}, Sect.\ 10 and Thm.\ 12.10, {\it ibidem}, and \cite{grob_zun}, Thm.\ 3.7. We only remark that the middle ``$LF$-perspective'' has the advantage over the classical ``$K_\infty$-finite perspective'' of not being dependent on the specific choice of a maximal compact subgroup, but being intrinsic to the underlying group scheme $G/F$ and providing continuous, smooth representations of $G(\A_F)$ (and not merely a $(\g_\infty,K_\infty, G(\A_F))$-module); while it has the advantage over the ``Hilbert space-perspective'' that its elements are proper functions (and not merely classes of functions), which is why, for instance, the evaluation mapping $ev_g$, for $g\in G(\A_F)$, is defined on $\mathcal A^\infty(G)$ (but clearly not on subspaces of $L^2_{dis}(G(F)\backslash G(\A_F))$).
If not otherwise specified, we shall from now on assume that square-integrable automorphic representations are assumed to be irreducible.\\\\
Irreducible square-integrable automorphic representations of $G(\A_F)$ are in connection with the near equivalence classes of irreducible automorphic representations of $\GL_n(\A_E)$, i.e., after passing to a system of representatives of the latter, with the isobaric automorphic representations of $\GL_n(\A_E)$ by base change. To this end, we recall the following

\begin{thm}\label{thm:lift}
Let $\pi$ be a square-integrable automorphic representation of $G_n(\A_F)$. Then, $\pi$ admits a global {\it base change} to an automorphic representation $BC(\pi) =: \Pi$ of $\GL_n(\A_E)$. This representation $\Pi$ is an isobaric sum $\Pi=\Pi_1\boxplus...\boxplus\Pi_l$ of conjugate self-dual (with respect to the non-trivial Galois automorphism $c$ of $E/F$) square-integrable automorphic representations $\Pi_i$ of $\GL_{n_i}(\A_E)$, $n=n_1+....+n_l$ some partition of $n$ . It is uniquely characterized by the property that for every achimedean place and at every non-archimedean place $\vv\in S$, where $\pi_\vv$ is unramified, $\Pi_\vv:=\otimes_{{\sf w} | {\sf v}}\Pi_{\sf w}$ is obtained from $\pi_\vv$ by local base change, cf.\ \cite[Prop.\ 1.3.3]{KMSW}. If $\pi_\infty$ is cohomological, then $\Pi_\infty=\otimes_{\ww\in \Sigma_\infty}\Pi_\ww$ is a cohomological representation of $\Res_{E/\Q}(\GL_n)(\R)$. 
\end{thm}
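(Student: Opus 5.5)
The plan is not to prove this from scratch but to assemble it from the endoscopic classification of the discrete spectrum of unitary groups. That classification is due to Mok \cite{mok} for quasisplit $G_n$ and to Kaletha--Minguez--Shin--White \cite{KMSW} for arbitrary inner forms, with the refinements of \cite{zou} where needed.

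\emph{Step 1: attach a global parameter.} Given a square-integrable $\pi$ of $G_n(\A_F)$, I will use \cite{mok, KMSW} to attach to it a global Arthur parameter
$$\psi=\boxplus_{i}\,\mu_i\boxtimes\nu_{b_i},\qquad \textstyle\sum_i m_ib_i=n.$$
Here each $\mu_i$ is a conjugate self-dual cuspidal representation of $\GL_{m_i}(\A_E)$ of the appropriate parity (sign $(-1)^{n-1+b_i}$), the pairs $(\mu_i,b_i)$ are pairwise distinct, and $\nu_{b}$ is the $b$-dimensional representation of $SL_2(\C)$. I then define $\Pi_i$ to be the Speh representation $\mathrm{Speh}(\mu_i,b_i)$. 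By Moeglin--Waldspurger this is a square-integrable residual (or cuspidal, if $b_i=1$) automorphic representation of $\GL_{m_ib_i}(\A_E)$, and it is conjugate self-dual because $\mu_i$ is. Setting $\Pi:=\boxplus_i\Pi_i$, with $n_i=m_ib_i$, gives the isobaric shape claimed in the statement.

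\emph{Step 2: local compatibility and uniqueness.} At a finite place $\vv$ where $\pi_\vv$ is unramified, the local packet $\Pi_{\psi_\vv}$ contains $\pi_\vv$ with Satake parameter determined by $\psi_\vv$. The stabilized trace formula, together with the fundamental lemma used in \cite{mok, KMSW}, shows that $\Pi_\vv=\otimes_{\ww|\vv}\Pi_\ww$ is the unramified local base change of $\pi_\vv$, in the sense of \cite[Prop.~1.3.3]{KMSW}. At archimedean places the same holds by the construction of local archimedean packets and their compatibility with local base change. Uniqueness then follows from the strong multiplicity one theorem of Jacquet--Shalika for isobaric representations: an isobaric sum of discrete-spectrum representations of $\GL_n(\A_E)$ is determined by its local components at almost all places.

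\emph{Step 3: the cohomological assertion, which I expect to be the main obstacle.} Suppose $H^*(\g_\infty,K_G,\pi_\infty\otimes\EE_\mu)\neq0$. By Wigner's lemma, $\pi_\infty$ then has the regular integral infinitesimal character of $\EE_\mu^\vee$. Local base change at each $\vv\in S_\infty$ transports this to an infinitesimal character of $\Pi_\vv$ that is regular and algebraic: it is the infinitesimal character of the restriction of $\EE_\mu$ to $\Res_{E/\Q}\GL_n$, with the base-changed highest weight. The difficulty is that regular algebraic infinitesimal character alone does not force cohomology for non-tempered unitary representations.

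For that step I would argue through the archimedean parameter. Since $\pi_\infty$ is cohomological, $\psi_\infty$ is of Adams--Johnson type, so $\pi_\infty$ is an $A_\q(\lambda)$ by Vogan--Zuckerman \cite{vozu}. Its base change $\Pi_\vv$ is then the Langlands quotient attached to $\psi_\vv$, an isobaric sum of Speh representations of $\GL_{n_i}$ with regular algebraic infinitesimal character. Using the explicit description of the cohomological unitary dual of $\GL_m(\R)$ and $\GL_m(\C)$ \cite{speh, enright}, as organized in \cite[Sect.~12]{nairprasad}, I would identify this Langlands quotient as a cohomologically induced module, and hence as cohomological. The Künneth rule over $\ww\in\Sigma_\infty$ then gives that $\Pi_\infty$ is cohomological.
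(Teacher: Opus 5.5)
Your proposal is correct and is essentially the paper's argument. The paper likewise obtains existence, the isobaric shape and the local characterization of $\Pi$ purely by citing the endoscopic classification \cite{mok, KMSW, zou}, and it gets the preservation of cohomology by citing the archimedean literature \cite{lab, clozelihes, johnson}, most directly \cite{nairprasad}, Sect.~12.

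Two small corrections. First, the parity of $\mu_i$ should be $(-1)^{n-b_i}$, not $(-1)^{n-1+b_i}$: this makes $\mu_i\boxtimes\nu_{b_i}$ of parity $(-1)^{n-1}$, consistent with Thm.~\ref{thm:qtcusp} for $b_i=1$. Second, your Step~3 is less of an obstacle than you say. Each $\Pi_\ww$ is irreducible and unitary, being fully induced from the unitary $\Pi_{i,\ww}$, and it has the infinitesimal character of an algebraic representation. So \cite{enright} resp.\ \cite{salamribsu}, Thm.~1.8, already forces it to be cohomological, contrary to your remark that this does not suffice for non-tempered unitary representations.
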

\begin{proof}
Existence of base change in the above form is due to the work of many people, cf.\ \cite{lab, harris-labesse, kim-krish04, kim-krish05, morel, shin, ckpssh}, to name a few, culminating in \cite{mok} (for quasisplit unitary groups) and \cite{KMSW, zou} (for the general case: see Thm$.^*$ 1.7.1 and Thm.\ 5.0.5 in the former and Thm.\ 2.1 and Prop.\ 4.1 in the latter reference)\footnote{A certain {\it caveat} is to be made: The references \cite{mok, KMSW, zou} depend in the work of Arthur, cf.\ \cite{arthur}, which itself still depends on a proof of the twisted weighted fundamental lemma (the ``local intertwining relation'' being settled in the quasi-split case by the recent fundamental work \cite{AGIKMS}.)}. It is well-known that the local archimedean base change of a cohomological representation is again cohomological, see \cite{lab} \S 5.1, \cite{clozelihes}, Sect.\ 3.3, (for tempered representations of real unitary groups), \cite{johnson}, or, perhaps most tailor-made, \cite{nairprasad}, Sect.\ 12.
\end{proof}

\noindent Recall that by well-known local results of Bernstein, Jacquet (non-archimedean) and Vogan (archimedean), $\Pi\cong{\rm Ind}_{P_{n_1,...,n_l}(\A_E)}^{\GL_n(\A_E)}[\Pi_1\otimes...\otimes\Pi_l]$ is fully induced from the conjugate self-dual square-integrable automorphic representations $\Pi_i$ (again, global parabolic induction is assumed to be normalized by the Hecke character given by $\rho_{P_{n_1,...n_l}}$, in order to preserve unitarity), cf.\ \cite{grob_book}, \S 4.3.5 for detailled references and a general exposition.\\\\ 
We shall define the  {\it Asai $L$-functions} (of sign $\pm$) attached to a cuspidal automorphic representation $\tau\cong \otimes'_\ww \tau_\ww$ of $\GL_n(\A_E)$ as the completed automorphic $L$-function 
$$L(s,\tau,{\rm As}^{\pm}):=\prod_{\ww\in \Sigma} L(s,\tau_\ww,{\rm As}^{\pm})$$
given by the product over all local Langlands $L$-functions $L(s,\tau_\ww,{\rm As}^{\pm})$ attached to the according Asai-representation ${\rm As}^{\pm}$ of the $L$-group of $\GL_n/E$. We refer to \cite{ggp}, pp.\ 26 and 82--83 or \cite{grbacshahidi}, Sect.\ 2.A, for details and to \cite{henniartAsai}, Thm.\ 1.5, for the compatibility of this definition with the other, partly older definitions in the literature. Thm.\ \ref{thm:lift} is now complemented by

\begin{thm}\label{thm:qtcusp}
Let $\pi$ be a cohomological, square-integrable automorphic representation of $G_n(\A_F)$ and write $\Pi=\Pi_1\boxplus...\boxplus\Pi_l$ for its base change to $\GL_n(\A_E)$. Then the following assertions are equivalent:
\begin{enumerate}
\item The summands $\Pi_i$ are cuspidal for all $1\leq i\leq l$.
\item $\Pi$ is tempered at all archimedean places and quasi-tempered at all non-archimedean places $\ww\in\Sigma$. Here, the latter means that for each non-archimedean $\ww\in\Sigma$, there is a partition $n=m_1+...+m_u$, discrete series representations $\delta_{i}$ of $\GL_{m_i}(E_\ww)$, and real numbers $a_{i}$ with $|a_{i}|<\tfrac12$, such that $\Pi_\ww\cong\delta_{1} |.|_\ww^{a_{1}}\times...\times\delta_{u} |.|_\ww^{a_{u}}$. 
\end{enumerate}
If one of the above equivalent assertions holds, then $\pi$ is necessarily cuspidal, $\pi_\infty$ is tempered and the isobaric summands $\Pi_i$ are all distinct, $\Pi_i\ncong\Pi_j$, for $i\neq j$. Moreover, for each $1\leq i\leq l$, the partial Asai $L$-function $L^T(s,\Pi_i,{\rm As}^{(-1)^{n-1}})$ has a pole at $s=1$ for a sufficiently large, but finite set of places $\Sigma_\infty\subseteq T\subset \Sigma$.
\end{thm}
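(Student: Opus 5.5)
The plan is to prove (1)$\Rightarrow$(2) and (2)$\Rightarrow$(1) separately, and then derive the supplementary assertions from Arthur's endoscopic classification as transferred to unitary groups in \cite{mok, KMSW}.

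For (1)$\Rightarrow$(2), assume every $\Pi_i$ is cuspidal. Each $\Pi_i$ is also conjugate self-dual and hence unitary. By Thm.\ \ref{thm:lift}, $\Pi_\infty$ is cohomological, so each $\Pi_{i,\ww}$, $\ww\in\Sigma_\infty$, is cohomological, since infinitesimal characters add under isobaric sums. Cuspidal representations are generic, so $\Pi_{i,\ww}$ is a unitary generic cohomological representation. By the dichotomy recalled in Sect.\ \ref{sect:tempGLcoh}, it is therefore tempered. One could alternatively invoke Clozel's purity lemma here. At a non-archimedean place, the local component of a unitary cuspidal representation of $\GL_{n_i}$ is generic and unitary. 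Tadić's classification of the generic unitary dual, combined with the Luo--Rudnick--Sarnak bound $|a|<\tfrac12$, shows that $\Pi_{i,\ww}$ has the quasi-tempered form stated in (2). Parabolic induction of these local components to $\GL_n(E_\ww)$ keeps this form.

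For (2)$\Rightarrow$(1), I would use Mœglin--Waldspurger. Each square-integrable $\Pi_i$ is a Speh representation $\mathrm{Speh}(\rho_i,b_i)$ with $\rho_i$ cuspidal on $\GL_{n_i/b_i}$. Suppose some $b_i\ge 2$. At an archimedean place, the local component of $\mathrm{Speh}(\rho_i,b_i)$ is the Langlands quotient of $\rho_{i,\ww}|\cdot|^{(b_i-1)/2}\times\dots\times\rho_{i,\ww}|\cdot|^{-(b_i-1)/2}$. Its Langlands parameter has non-zero real exponents $\pm\tfrac{b_i-1}{2}$, up to the archimedean bounds on $\rho_i$, which are strictly smaller than $\tfrac12$. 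So it is not tempered. Since $\Pi_\ww$ is the irreducible induced representation with this component as a factor, $\Pi_\ww$ would not be tempered either, contradicting (2). A single place already gives the contradiction. The non-archimedean condition is only needed for the converse direction.

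For the supplementary assertions, $\pi$ lies in an Arthur packet with parameter $\psi=\boxplus_i \Pi_i\boxtimes S_1$, which is generic. Square-integrable members of packets with a generic parameter are cuspidal: a residual representation has a parameter with some $S_b$, $b\geq2$, coming from the constant term. The summands are distinct because of the multiplicity-free condition in the definition of discrete parameters in \cite{mok, KMSW}. Temperedness of $\pi_\infty$ follows because local base change at archimedean places is injective on $L$-parameters and preserves boundedness. Finally, a discrete parameter requires each simple summand $\Pi_i$ to be conjugate-self-dual of the parity $(-1)^{n-1}$ dictated by the chosen embedding of the $L$-group of $G_n$, i.e., the twist by $\eta_{E/F}$ when $n$ is even. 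Equivalently, the partial Asai $L$-function $L^T(s,\Pi_i,{\rm As}^{(-1)^{n-1}})$ has a pole at $s=1$. This uses $L^T(s,\Pi_i\times\Pi_i^{c})=L^T(s,{\rm As}^+)L^T(s,{\rm As}^-)$, which has a simple pole because $\Pi_i\cong\Pi_i^{c\vee}$.

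The main obstacle is the cuspidality of $\pi$ together with the parity statement. Both rest on the precise form of the endoscopic classification, namely the characterisation of discrete global parameters and the fact that residual spectrum corresponds to non-generic parameters. I would cite these rather than prove them, noting the caveat in the footnote to Thm.\ \ref{thm:lift}.
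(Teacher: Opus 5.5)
Your proposal is correct, but it argues differently from the paper in two places.

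\textbf{(2)$\Rightarrow$(1).} The paper writes each $\Pi_i$ via M\oe glin--Waldspurger. It then uses the quasi-tempered condition at the non-archimedean places, together with uniqueness of Langlands data, to force $b_i=1$. You use only archimedean temperedness: the exponents $c+\tfrac{b_i-1}{2}$ with $|c|<\tfrac12$ are strictly positive once $b_i\geq 2$. Both work, and yours shows that the finite-place part of (2) is not needed for this direction.

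\textbf{Supplementary assertions.} The paper's chain is as follows. $\Pi$ is a generic elliptic parameter whose archimedean localization is bounded, so $\pi_\infty$ is tempered by \cite{KMSW}, Thm.\ 1.6.1(5). Then $\pi$ is cuspidal by Wallach's theorem \cite{wallach}. You reverse the order:
\begin{itemize}
\item Cuspidality comes first, from the global fact that a residual representation cannot lie in a generic packet. You state this only as a slogan, and it is the whole content, so write it out. A residual $\pi$ has cuspidal support $(\tau_1\hat\otimes\cdots\hat\otimes\sigma,\lambda)$ with $\lambda\neq0$ real. Hence at almost all places the Satake parameter of $BC(\pi)$ contains some $\tau_{j,\ww}\|\cdot\|^{\lambda_j}$ with $\lambda_j\neq 0$. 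By Jacquet--Shalika's uniqueness of isobaric decompositions, this cannot match $\Pi_1\boxplus\cdots\boxplus\Pi_l$ with all $\Pi_i$ unitary cuspidal.
\item Temperedness of $\pi_\infty$ comes separately, from the archimedean Langlands classification and the archimedean compatibility in Thm.\ \ref{thm:lift}. What you actually use is not injectivity of base change on $L$-parameters. It is the fact that a parameter of $W_{F_\vv}$ is bounded once its restriction to the subgroup $W_{E_\ww}$, of index at most two, is bounded.
\end{itemize}

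\textbf{Trade-offs.} The paper's route is shorter and gets cuspidality for free from temperedness. However, it relies on the local statement that packets of bounded generic parameters consist of tempered representations. Your route avoids Wallach's theorem and that local input. It pays with the description of the residual spectrum through cuspidal supports and with strong multiplicity one for isobaric sums. You also justify the distinctness of the $\Pi_i$ explicitly, via multiplicity-freeness of elliptic parameters, which the paper leaves implicit.

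\textbf{Minor points on (1)$\Rightarrow$(2).} The Luo--Rudnick--Sarnak bound is superfluous, since Tadi\'c's classification of the generic unitary dual already gives $|a_i|<\tfrac12$. Applying the generic-and-unitary implies tempered dichotomy directly to $\Pi_\ww$, as the paper does, is cleaner than passing to each $\Pi_{i,\ww}$. The individual $\Pi_{i,\ww}$ need only be cohomological with respect to a non-algebraic coefficient module.
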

\begin{proof}
This is well-known. Lacking a good reference, we sketch the argument. Suppose that (1) holds. Since $\Pi\cong{\rm Ind}_{P_{n_1,...,n_l}(\A_E)}^{\GL_n(\A_E)}[\Pi_1\otimes...\otimes\Pi_l]$ is fully induced and since cuspidal automorphic representations of $\GL_m(\A_E)$, $m\geq 1$, are all globally generic (cf. \cite{shal} corollary on p. 190), $\Pi_\ww$ is quasi-tempered at all non-archimedean places $\ww\in\Sigma$ by the classification of the generic unitary dual of the general linear group over a non-archimedean local field, see \cite{kudla}, \S 4.3 - 4.4, for a general exposition and further references. At an archimedean place $\Pi_\ww$ is tempered, since it is cohomological, cf.\ Thm.\ \ref{thm:lift}, unitary and (being induced from generic representations itself) generic, see \S \ref{sect:tempGLcoh} above. This shows (2).\\\\ We continue to assume that (1) holds. Then, $\Pi$ serves as a generic, elliptic global Arthur-parameter $\phi$ in the sense of \cite{KMSW}, \S 1.3.4 or \cite{zou}, \S 2.1. This has two consequences: On the one hand, its localization $\phi_\vv$ (cf.\ \cite{KMSW}, Prop.\ 1.3.3) at any place $\vv\in S$ , where $\Pi_\vv$ is tempered, is bounded, because so is the local Langlands-parameter attached to $\Pi_{\vv}$ by the Local Langlands Correspondence, \cite{HT, henniart}. Hence, item (5) of Thm.\ 1.6.1 of \cite{KMSW} implies that each square-integrable automorphic representation of $G(\A_{F})$ attached to $\phi$ by \cite{KMSW}, Thm.\ 5.0.5 is tempered at every such place $\vv\in S$. See also \cite{zou}, Thm.\ 2.5.1.(2). In particular, 
\begin{equation}\label{eq:temp}
\Pi_\vv \ \textrm{tempered}\Rightarrow \pi_\vv \ \textrm{tempered.}
\end{equation}
Since we just observed that $\Pi_\infty$ is tempered, we obtain that $\pi_\infty$ is tempered, which in turn implies that $\pi$ is necessarily cuspidal, \cite{wallach}, Thm.\ 4.3.\\\\ 
On the other hand, it follows directly from the nature of the global parameter $\phi$, that the partial Asai $L$-functions $L^T(s,\Pi_i,{\rm As}^{(-1)^{n-1}})$ have a pole at $s=1$ for a sufficiently large, but finite set of places $\Sigma_\infty\subseteq T\subset \Sigma$ for each $1\leq i\leq l$, cf.\ \cite{zou}, Thm.\ 2.1 and Sect.\ 2.1 {\it ibidem}. 
So (1) implies also the other claims made in the statement of the theorem. \\\\
To complete the proof, let us finally assume that (2) holds. By the classification of the discrete spectrum of $\GL_m(\A_E)$ by M\oe glin-Waldspurger, \cite{MW}, every square-integrable automorphic summand $\Pi_i$ of $\Pi$ is itself an isobaric sum of the form 
$$\Pi_i=\Theta_i \|\cdot \|_E^{\tfrac{b_i-1}{2}}\boxplus \Theta_i \|\cdot \|_E^{\tfrac{b_i-3}{2}} \boxplus ...\boxplus \Theta_i \|\cdot \|_E^{-\tfrac{b_i-3}{2}} \boxplus\Theta_i \|\cdot \|_E^{-\tfrac{b_i-1}{2}},$$ 
for $\Theta_i$ unitary cuspidal automorphic and $b_i\geq 1$ an integer. Recalling the already used fact that the local components of $\Theta_i$ are quasi-tempered, because they are generic and unitary, assumption (2) (together with the uniqueness of the square-integrable local Langlands datum) forces $b_i=1$, i.e., that each summand $\Pi_i$ of $\Pi$ is cuspidal, $\Pi_i=\Theta_i$, as claimed by (1).
 \end{proof}

\begin{rmk}
If $E/F$ is a CM-extension, then it was proved in \cite{GHL}, Prop.\ 3.4, that for a cohomological cuspidal automorphic representation $\pi$ of $G(\A_F)$, the equivalent conditions of Thm.\ \ref{thm:qtcusp} are furthermore equivalent to
\begin{enumerate}
\item[(4)] $\pi_\vv$ is tempered at every place $\vv\in S$. 
\item[(5)] $\pi$ contributes to cute\footnote{This word being an acronym for {\it cu}spidal {\it te}mpered, see \cite{GHL}.} coherent cohomology of $G(F)\backslash G(\A_F)/K_\infty$.
\end{enumerate}
A key-ingredient is \cite{car}, Thm.\ 1.2, which established the Ramanujan conjecture for the (cohomological twists of the) isobaric summands $\Pi_i$ of $\Pi=BC(\pi)$. Indeed, whenever one works over a number field $E$, for which the Ramanujan conjecture is known for conjugate self-dual cuspidal automorphic representations of $\GL_n(\A_E)$, then our observation \eqref{eq:temp} made in course of the proof of Thm.\ \ref{thm:qtcusp} will imply that $\pi$ is tempered at every place $\vv\in S$, once its base change $\Pi$ has only cuspidal automorphic summands.
\end{rmk}

\section{Poles of Eisenstein series}
\subsection{Parabolic and cuspidal supports}\label{sect:pc}
Let $\J$ be the ideal of $\mathcal Z(\g)$, which annihilates the contragredient representation $\EE_\mu^\vee$ of $\EE_\mu$. It is of finite codimension in $\mathcal Z(\g)$ and we denote by $\mathcal A^\infty_\mathcal J(G)$ the attached space of smooth-automorphic forms, which are annihilated by some power of $\mathcal J$. Unlike  $\mathcal A^\infty(G)$, it carries an $LF$-space structure in a natural way and equipped with right-translation of functions, this turns $\mathcal A^\infty_\mathcal J(G)$ into a smooth $G(\A)$-representation, cf.\ \cite{grob_book}, Thm.\ 11.17 or \cite{grob_zun}, Prop.\ 2.15. For an associate class  $\{P\}$ of parabolic $F$-subgroups, we denote by $\mathcal A^\infty_{\J,\{P\}}(G)$ its $G(\A)$-subrepresentation on the $LF$-space of all $\varphi\in\mathcal{A}^\infty_\J(G)$, which are negligible along every parabolic $F$-subgroup $Q\notin\{P\}$, i.e., for which the respective constant term $\varphi_Q$ satisfies
$$\lambda_{Q,g,\phi}(\varphi):=\int_{L_Q(F)\backslash L_Q(\A_F)^{(1)}} \varphi_Q(lg) \,\overline{\phi(l)} \,dl =0,$$
for all $g\in G(\A_F)$ and all cuspidal smooth-automorphic forms $\phi\in \mathcal A_{cusp}^\infty(L_Q)$. Then, $\mathcal{A}^\infty_\J(G)$ decomposes as a $G(\A)$-representation, hence, also topologically as
\begin{equation}\label{eq:parabsuppdec}
\mathcal{A}^\infty_\J(G)\cong\bigtoplus_{\{P\}}\mathcal A^\infty_{\J,\{P\}}(G),
\end{equation}
cf. \cite{grob_book}, Thm.\ 14.17. (Here and in what follows the use of the symbol $\bigtoplus$ has proved to be a convenient way to indicate that the respective isomorphism is also an isomorphism of underlying locally convex vector spaces.)\\\\ In \cite{grob_book}, \S 15 and \cite{grob_zun}, \S 4.6, the various summands $\mathcal A^\infty_{\mathcal J,\{P\}}(G)$ were decomposed even further. To this end, we recall the notion of an {\it associate class} $\varphi_{P}$ of cuspidal smooth-automorphic representations of the Levi subgroups of the elements in the class $\{P\}$: These associate classes are parameterized by pairs of the form $(\pi,\Lambda)$, where
\begin{enumerate}
\item $\pi$ is a unitary cuspidal (smooth-)automorphic representation of $L(\A_F)$, whose central character vanishes on the group $A^\R_P$
\item $\Lambda:A^\R_P\rightarrow\C^*$, a complementary Lie group character such that
\item the Weyl group orbit of the infinitesimal character $\chi_{\pi}$ of $\pi_\infty$ and the derivative $\lambda_0:=d\Lambda\in\check\a_{P,\C}$ of $\Lambda$ annihilate the ideal $\mathcal J$. We refer to \cite{schwfr}, 1.2, for the original and \cite{grob_book}, \S 15.2, for a more tailor-made and detailed source. Here we only remark that this condition implies that $-(\mu+\rho)$ lies in the Weyl group orbit of $\chi_{\pi}+\lambda_0$ 
\end{enumerate}
Hence, as $A^\R_P$ is a direct factor of $L(\A_F)$, we may and will view each associate class $\varphi_{P}$ equally well as being represented by a (potentially non-unitary) cuspidal (smooth-)automorphic representation 
$$\pi_{\lambda_0}:=\pi\otimes e^{\langle \lambda_0,H_{P}(\cdot)\rangle}$$ of $L(\A_F)$ subject to the above conditions. We record the following, technically important result:

\begin{lem}\label{lem:varphis}
Let $\varphi_{P}$ be an associate class of cuspidal automorphic representations of $L(\A_F)$, represented by $\pi_{\lambda_0}=\pi\otimes e^{\langle \lambda_0,H_{P}(\cdot)\rangle}$. Write $\pi=\tau_1\hat\otimes...\hat\otimes\tau_{t_P}\hat\otimes\sigma$ according to the decomposition 
$L(\A_F)\cong \prod_{i=1}^{t_P}\GL_{k_i}(\A_E)\times U(V_{\ell_P})(\A_F)$, cf.\ \eqref{eq:LP} Then, each $\tau_{i,\infty}$ is tempered and $\sigma_\infty$ is cohomological.
\end{lem}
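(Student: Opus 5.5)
The plan is to use condition (3) in the definition of an associate class $\varphi_P$: the Weyl group orbit of $\chi_\pi+\lambda_0$ contains $-(\mu+\rho)$, which is regular and integral. This means the infinitesimal character of $\pi_{\lambda_0,\infty}$, viewed as a representation of $L_\infty$, is that of a finite-dimensional representation $\cF_w$ of $L_\infty$. By Kostant's theorem, $\cF_w$ may be taken to have highest weight $w(\mu+\rho)-\rho$ restricted to $L$, for the unique $w\in W^P$ with $-w(\mu+\rho)|_{\check\a_P}=\lambda_0$ up to the usual sign conventions. Concretely, I would first fix such a $w$ and observe that $\chi_{\pi_\infty}$ coincides with the infinitesimal character of $\cF_w^\vee$ restricted to ${}^0L_\infty = L_\infty\cap L(\A_F)^{(1)}$. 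In particular, $\chi_{\pi_\infty}$ is regular and integral, and the same holds for each factor separately: for each $\tau_{i,\infty}$ on $\GL_{k_i}(E\otimes_\Q\R)$ (modulo its central $A^\R_P$-part) and for $\sigma_\infty$ on $U(V_{\ell_P})_\infty$.

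Next I would treat the $\GL$-factors. Each $\tau_i$ is a unitary cuspidal automorphic representation of $\GL_{k_i}(\A_E)$ (after removing the $A^\R_P$-twist, which is absorbed into $\lambda_0$). By Shalika's result \cite{shal}, $\tau_i$ is globally generic, so every archimedean component $\tau_{i,\ww}$ is irreducible, unitary and generic. An irreducible unitary generic representation of $\GL_m(\R)$ or $\GL_m(\C)$ is a fully induced product of discrete series or characters twisted by $|\cdot|^{a}$ with $|a|<\tfrac12$. A regular integral infinitesimal character forces all exponents $a$ to lie in $\tfrac12\Z$ and hence to vanish; in the $\GL_m(\R)$-case one also has to exclude complementary-series pairs of the form $\chi|\cdot|^{a}\times\chi|\cdot|^{-a}$. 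Consequently $\tau_{i,\ww}$ is tempered, and it is then cohomological by the discussion in \S\ref{sect:tempGLcoh}: unitary generic with regular integral infinitesimal character is the same as tempered cohomological, as recalled there. This step needs some care with half-integrality, since the shift by $\rho$ and by $\lambda_0$ may produce half-integers at complex places. I would absorb this into the statement that differences of infinitesimal-character coordinates are integers, which is what regularity and integrality give.

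For $\sigma$, the argument is Wigner's lemma combined with Vogan--Zuckerman, and this is where I expect the main obstacle to lie. Knowing that $\chi_{\sigma_\infty}$ equals the infinitesimal character of the finite-dimensional $\cF_{w}^{\vee}|_{U(V_{\ell_P})_\infty}$ does not by itself imply that $H^*(\g,K,\sigma_\infty\otimes\cF)\ne0$. My plan is to use unitarity of $\sigma_\infty$: by the Vogan--Zuckerman classification \cite{vozu}, together with Salamanca-Riba's theorem \cite{salamribsu}, an irreducible unitary $(\g,K)$-module whose infinitesimal character is regular integral, equal to that of a finite-dimensional representation, is an $A_\q(\lambda)$-module, and hence cohomological with respect to that module. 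Equivalently, I can cite \cite{bowa}, I.5.3 and \cite{vozu} directly. This settles the claim for $\sigma_\infty$ place by place, via the Künneth rule.

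Finally I would double-check the one delicate point: the regularity of $\chi_{\pi}+\lambda_0$ on the full group has to descend to regularity of each factor's infinitesimal character on $L$. This holds because the $L$-infinitesimal character is the restriction of a $W$-conjugate of $\mu+\rho$, and a restriction of a regular weight to a Levi subsystem remains regular with respect to the Levi roots.
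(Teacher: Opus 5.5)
Your proof is correct, but for the $\GL$-factors it takes a different route from the paper.

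\emph{The paper's route.} It derives everything from cohomology. It restricts $\pi_\infty$ to the connected group $M^\circ_\infty$ and applies \cite{salamribsu}, Thm.\ 1.8, once, to the connected semisimple group $\prod_i{\rm SL}_{k_i,\infty}\times SU(V_{\ell_P})_\infty$. The compact central torus is disposed of via the infinitesimal character. Both claims then follow from K\"unneth: $\sigma_\infty$ is cohomological directly. The factor $\tau_{i,\ww}$ is tempered because its restriction to ${\rm SL}_{k_i}(E_\ww)$ contains a generic cohomological unitary summand, which is tempered, and temperedness survives induction to ${\rm SL}^\pm_{k_i}(\R)$ resp.\ ${\rm SL}_{k_i}(\C)$.

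\emph{Your route.} You get temperedness of $\tau_{i,\ww}$ without any cohomology. Genericity (Shalika) and the description of the generic unitary dual of $\GL_m(\R)$, $\GL_m(\C)$ reduce you to excluding complementary-series pairs $\delta|\cdot|^{a}\times\delta|\cdot|^{-a}$ with $0<a<\tfrac12$. Such a pair puts two coordinates differing by $2a\notin\Z$ into one $\g\l$-component of the infinitesimal character, contradicting the integrality coming from condition (3).

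\emph{Comparison.} Your argument is more elementary: it avoids the ${\rm SL}/{\rm SL}^\pm$ detour and uses only integrality, not regularity. The paper's route buys a single, uniform application of Salamanca-Riba to a group that visibly satisfies its hypotheses.

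\emph{Two details for $\sigma_\infty$.} Both proofs rest on \cite{salamribsu} here. Applying it place by place, as you propose, needs the same routine reduction the paper makes. At split real places $U(V_{\ell_P})(F_\vv)\cong\GL_{\ell_P}(\R)$ is disconnected with non-compact center, and at complex places $\GL_{\ell_P}(\C)$ has non-compact center. So pass to the identity component and note that the split center acts by the character forced by condition (3). Also, \cite{bowa}, I.5.3 together with \cite{vozu} is not an equivalent substitute for \cite{salamribsu}. Unitarity and the right infinitesimal character only give $H^q=\Hom_{K}(\wedge^q\mathfrak p,\sigma_\infty\otimes\cF)$, and non-vanishing of this is exactly what Salamanca-Riba's theorem supplies.
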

\begin{proof}
Recall our choice of the ideal $\mathcal J$ of $\mathcal Z(\g)$, which annihilates the contragredient representation $\EE_\mu^\vee$ of $\EE_\mu$. Condition (3) from above hence ensures that the archimedean component of $\pi_{\lambda_0}\otimes e^{\langle \rho_P,H_{P}(\cdot)\rangle}$ has the same infinitesimal character as the contragredient of an irreducible summand $\mathcal F$ of the semisimple $L_\infty$-representation $H^*(\n_{P,\infty},\EE_\mu)$. The latter is algebraic, because so is the action of $L_\infty$ on $\EE_\mu$ and on $\n_{P,\infty}$.\\\\ 
To analyze this further, let $M_\infty=\prod_{\vv\in S_\infty} M_\vv$ be the subgroup of $L_\infty$, defined by $M_\vv:=\bigcap \ker |\xi_\vv|$, the intersection running over all continuous characters $\xi_\vv:L(F_\vv)\rightarrow\C^*$. It is a reductive Lie group with compact center and admits a direct complement in $L_\infty=M_\infty\times A_\infty$, with $A_\infty\supseteq A_P^\R$. Therefore, the restriction of $(\pi_{\lambda_0}\otimes e^{\langle \rho_P,H_{P}(\cdot)\rangle})_\infty$ to the connected component $M_\infty^\circ$ of $M_\infty$ (which is of finite index in the latter) equals $\pi_\infty |_{M_\infty^\circ}$, which is furthermore isomorphic to the finite direct sum of irreducible unitary representations
$$\pi_\infty |_{M_\infty^\circ}\cong \pi_1\oplus...\oplus\pi_y$$
of the connected Lie group $M_\infty^\circ$. Up to a finite cover, $M^\circ_\infty$ is the product of its derived semisimple subgroup 
$$\mathcal D(M^\circ_\infty)\cong \prod_{i=1}^{t_P} {\rm SL}_{k_i,\infty}\times SU(V_{\ell_P})_\infty,$$ 
and a compact central torus $U(1)^{(t_P+1)|S^{\sf in}_\R|+(2t_P+1)|S_\C|}$. Any irreducible unitary Harish-Chandra module for $M^\circ_\infty$ therefore decomposes, after passing to this finite cover, as the product of an irreducible unitary Harish-Chandra module for the derived group $\mathcal D(M^\circ_\infty)$ and a unitary character of the compact center. Analogously, $\cF|_{M_\infty^\circ}$ breaks as the finite direct sum of pairwise isomorphic irreducible finite-dimensional representations of $M_\infty^{\circ}$, which we shall all, as they are all pairwise isomorphic, simply denote by $\cF^{\circ}$. The the summands $\pi_1$, ..., $\pi_y$ all have the same infinitesimal character as $(\cF^\circ)^\vee$, which follows from condition (3) from above: See \cite{bowa}, Rem.\ 1 on p.\ 64. As a consequence, \cite{salamribsu}, Thm.\ 1.8 applies to the connected derived semisimple factor $\mathcal D(M^\circ_\infty)$ and shows that $\pi_1|_{\mathcal D(M^\circ_\infty)}$, ..., $\pi_y|_{\mathcal D(M^\circ_\infty)}$ all have non-zero $(\mathcal D(\m_\infty), K_{\mathcal D(M^\circ_\infty)})$-cohomology with respect to $\cF^{\circ}|_{\mathcal D(M^\circ_\infty)}$. Also, the just mentioned match of the infinitesimal characters shows that the compact connected central torus $U(1)^{(t_P+1)|S^{\sf in}_\R|+(2t_P+1)|S_\C|}$ acts trivially on $\pi_j\otimes \cF^{\circ}$, $1\leq j\leq y$, whence $\pi_\infty$ has non-trivial $(\m_\infty, K_{M^\circ_\infty})$-cohomology with respect to $\cF^\circ$.\\\\ 
Since $\prod_{i=1}^{t_P}{\rm SL}_{k_i,\infty}=\prod_{i=1}^{t_P}\prod_{\ww\in\Sigma_\infty} {\rm SL}_{k_i}(E_\ww)$ applying the K\"unneth-rule yields that for each $\ww\in \Sigma_\infty$ and each $1\leq i\leq t_P$, $\tau_{i,\ww}|_{{\rm SL}_{k_i}(E_\ww)}$ is the finite direct sum of irreducible unitary cohomological representations of ${\rm SL}_{k_i}(E_\ww)$. At least one of them, call it $\tau'_{i,\ww}$, is generic, which follows from genericity of $\tau_{i,\ww}$, and such a $\tau'_{i,\ww}$ is therefore tempered, see \S \ref{sect:tempGLcoh} (translated from ${\rm GL}_{k_i}(E_\ww)$ to the case of ${\rm SL}_{k_i}(E_\ww)$, which only needs a short argument, if $E_\ww\cong\R$, cf.\ \cite{speh81}, Sect.\ 1.3--1.5, for the latter). Inducing $\tau'_{i,\ww}$ from ${\rm SL}_{k_i}(E_\ww)$ to its finite cover, given by the maximal semisimple subgroup of $M_\vv\cap {\rm GL}_{k_i}(E_\ww)$ (which is ${\rm SL}^\pm_{k_i}(\R)$, if $\ww$ lies above a split real place $\vv\in S^{\sf sp}_\R$ and ${\rm SL}_{k_i}(\C)$ else), preserves temperedness, so, recalling that the compact central torus of $M^\circ_\vv$ as well as $A_\vv$ act by a (unitary) character on $\tau_{i,\ww}$, we are led to conclude that $\tau_{i,\ww}$ is tempered for every $\ww\in \Sigma_\infty$ and every $1\leq i\leq t_P$, whence so are all the representations $\tau_{i,\infty}$, $1\leq i\leq t_P$. This shows the first claim.\\\\
For the second claim, observe that the irreducible $U(V_{\ell_P})_\infty$-factor of $\pi_\infty$, which is just $\sigma_\infty$ by definition, is the same as of $(\pi_{\lambda_0}\otimes e^{\langle \rho_P,H_{P}(\cdot)\rangle})_\infty$, because $A_P$ has trivial intersection with $U(V_{\ell_P})$. But since $\pi_\infty$ has non-zero $(\m_\infty, K_{M^\circ_\infty})$-cohomology with respect to $\cF^{\circ}$, the $L_\infty$-representation $(\pi_{\lambda_0}\otimes e^{\langle \rho_P,H_{P}(\cdot)\rangle})_\infty$ is cohomological with respect to $\cF$, which follows from condition (3) from above, see again \cite{bowa}, Rem.\ 1 on p.\ 64. Therefore, using the K\"unneth-rule shows that the $U(V_{\ell_P})_\infty$-representation $\sigma_\infty$ is cohomological.
\end{proof}

Attached to $\pi$ let $I^G_{P}(\pi)$ be the space of all smooth, left $L(F)N(\A)A^\R_P$-invariant functions $f: G(\A)\ra\C$, such that for every $g\in G(\A)$ the function $l\mapsto f(lg)$ on $L(\A)$ is contained in the $\pi$-isotypic component $\pi^{m(\pi)}$ of the $LF$-space $\mathcal{A}^\infty_{cusp}(L)$. For a function $f\in I^G_{P}(\pi)$, $\lambda\in\check\a_{P,\C}$ and $g\in G(\A)$ an {\it Eisenstein series} may be formally defined as
\begin{equation}\label{eq:eis}
E_{P}(f,\lambda)(g):=\sum_{\gamma\in P(F)\backslash G(F)}
f(\gamma g) e^{\<\lambda+\rho_P,H_{P}(\gamma g)\>}.
\end{equation}
If $f$ is $K_\infty$-finite, the so-defined Eisenstein series is known to converge absolutely and uniformly on compact subsets of $G(\A)\times \{\lambda\in\check\a_{P,\C}| \Re e(\lambda)\in\rho_P+\check\a^{+}_P\}$, cf.\ \cite{moewal}, Prop.\ II.1.5. For such $\lambda$, $E_{P}(f,\lambda)$ is a smooth-automorphic form. In turn, given $g\in G(\A_F)$, the map $\lambda\mapsto E_{P}(f,\lambda)(g)$ can be continued to a meromorphic function on all of $\check\a_{P,\C}$, whose singularities (i.e., poles) lie along affine hyperplanes of the form $R_{\alpha, C}:=\{\xi\in\check\a_{P,\C}| (\xi,\alpha)=C\}$ for some constant $C$ and some root $\alpha\in\Delta(P,A_P)$, called ``root-hyperplanes'' (cf.\ \cite{moewal}, Thm.\ IV.1.8, Prop. IV.1.11 (a); \cite{langlandsLNM}, pp.\ 170--171; or, \cite{BL}, Thm.\ 2.3). This entails the assertion that for each $\xi_0\in \check\a_{P,\C}$, there is a minimal integer $k\geq 0$, such that the function 
$$q_{\xi_0}(\lambda):=\prod_{\alpha\in\Delta(P,A_P)}\langle \lambda-{\xi_0},\check\alpha\rangle^k,$$
is a non-zero, holomorphic function in $\lambda\in \check\a_{P,\C}$, with the property that the assignment $ \check\a_{P,\C}\ra\C$, which sends $\lambda \mapsto q_{\xi_0}(\lambda) E_P(f,\lambda)(g)$ is holomorphic in a small neighbourhood of ${\xi_0}$ for all $K_\infty$-finite $f\in I_P^G(\pi)$ and $g\in G(\A)$. As it is clear from the definition of $I_P^G(\pi)$, evaluation of functions at the identity element $g=id\in G(\A_F)$ identifies each $\phi\in {\rm Ind}_{P(\A_F)}^{G(\A_F)}[\pi^{m(\pi)}\otimes e^{\langle \lambda,H_{P}(\cdot)\rangle}]$ with a unique summand of the form $f\cdot e^{\<\lambda+\rho_P,H_{P}(\cdot)\>}$ as in \eqref{eq:eis}.\\\\
Let ${S}(\check\a_{P,\C})$ be the symmetric algebra of $\check\a_{P,\C}$, viewed as the space of differential operators $\partial$ with constant coefficients on $\check\a_{P,\C} $. Then, at $\lambda_0=d\Lambda$ as above, the formal assignment defined by
\begin{equation}\label{eq:Eismap}
{\rm Eis_{\pi,\lambda_0}}(f\otimes \partial):=\partial(q_{\lambda_0}(\lambda) E_P(f,\lambda))|_{\lambda=\lambda_0}
\end{equation}
is well-defined on the $K_\infty$-finite elements $f\in I^G_P(\pi)_{(K_\infty)}$ and $\partial\in{S}(\check\a_{P,\C})$ and we let 
\begin{equation}\label{eq:defcuspsup}
\mathcal A^\infty_{\mathcal J,\{P\},\varphi_P}(G):={\rm Cl}_{\mathcal A^\infty_{\mathcal J}(G)}({\rm Eis_{\pi,\lambda_0}}(I^G_P(\pi)_{(K_\infty)}\otimes {S}(\check\a_{P,\C})))
\end{equation}
denote the topological closure of its image in $\mathcal{A}^\infty_{\J}(G)$. Its definition is independent of the choice of the representatives $P$ and $(\pi,\Lambda)$, thanks to the functional equations satisfied by the Eisenstein series considered, cf.\ \cite{moewal}, Thm.\ IV.1.10 and  \cite{schwfr} 1.2-1.4. Moreover, $\mathcal A^\infty_{\mathcal J,\{P\},\varphi_P}(G)$ is a smooth-automorphic subrepresentation, lying inside the $G(\A_F)$-representation on the $LF$-space $\mathcal A^\infty_{\mathcal J,\{P\}}(G)$. It is the main result of \cite{grob_zun} that one has in fact a direct sum decomposition into closed $G(\A)$-subrepresentations
\begin{equation}\label{eq:cuspsuppdec}
	\mathcal{A}^\infty_{\J,\left\{P\right\}}(G)=\bigtoplus_{\varphi_P}\mathcal A^\infty_{\mathcal J,\{P\},\varphi_P}(G), 
\end{equation}
the topological direct sum ranging over all associate classes of cuspidal smooth-automorphic subrepresentations of $L(\A_F)$ as above. See also \cite{grob_book}, Thm.\ 15.21.\\\\ 
By their very definition, see \eqref{eq:defcuspsup} above, in order to describe the $G(\A)$-representations $\mathcal A^\infty_{\mathcal J,\{P\},\varphi_P}(G)$, one has to describe the (limits of) Cauchy filters of partial derivatives of holomorphic residues of Eisenstein series $E_{P}(f,\lambda)$ as above. In particular, it is decisive to know, whether for a given associate class $\varphi_P$ there are smooth $K_\infty$-finite sections $f\in I^G_P(\pi)$ such that the attached Eisenstein series $E_{P}(f,\lambda)$ have a pole at the point $\lambda_0\in\check\a_{P,\C}$. To this end, one has to study the poles of so-called global standard intertwining operators, as defined in \cite{moewal}, II.1.6. Indeed, combining \cite{moewal}, Lem.\ I.4.10 with Prop.II.1.7 and Sect,\ IV.4.1, and recalling that all standard parabolic $F$-subgroups $P$ of $G$ are self-associate, one sees that the poles of an Eisenstein series $E_{P}(f,\lambda)$ are contained in the union of all poles of the standard global intertwining operators appearing in its constant term along $P$ itself.

\subsection{Intertwining operators}\label{sect:intops}
We shall render the above now more precise in the case of maximal parabolic $F$-subgroups. Therefore, let now $P=P_k$, $1\leq k\leq r$, be a maximal parabolic $F$-subgroup of $G$ with Levi decomposition $P_k=L_kN_k$, cf.\ \S \ref{sect:paras}, and let $\varphi_{P_k}$ be an associate class of cuspidal automorphic representations, represented by a cuspidal automorphic representation $\pi_{\lambda_0}=\pi\otimes e^{\langle \lambda_0,H_{P}(\cdot)\rangle}$ of $L_k(\A_F)$. We write $\lambda_0=s_0\gamma_k$ with respect to our fixed generator $\gamma_k=2(\varepsilon_1+\cdots+\varepsilon_k)$ of $X^*(L_k)$. Moreover, since $L_k \cong  \Res_{E/F}\GL_{k}\times G_{n-2k}$, cf.\ \eqref{eq:LPk}, we may write, $\pi=\tau\hat\otimes\sigma$, where $\tau$ is a unitary cuspidal automorphic representation of $\GL_k(\A_E)$ and $\sigma$ is a unitary cuspidal automorphic representation of $G_{n-2k}(\A_F)$. The quotient group $W_k:=N_{G(F)}(A_{P_k}(F))/L_k(F)$ has two elements and we choose and fix a representative $w_k\in G(F)$ for its unique non-trivial element. We let $w_k(\pi)$ be the representation $w_k(\pi)(g)=\pi(w_k gw_k^{-1})$, which, due to multiplicity one for the cuspidal spectrum of $\GL_k(\A_E)$, turns out to be equal to $({}^c\tau^\vee)\hat\otimes\sigma$, where ${}^c\tau^\vee$ denotes the conjugate dual representation of $\tau$ (with respect to the non-trivial Galois automorphism $c$ of $E/F$). (It is hence independent of the very choice of $w_k$.)\\\\ For $s\in\C$ with $\Re e(s)\gg 0$  we consider the usual global intertwining operator
$$M(s,\pi): {\rm Ind}_{P_k(\A_F)}^{G(\A_F)}[\pi\otimes e^{\langle s\gamma_k,H_{P_k}(\cdot)\rangle}]\longrightarrow {\rm Ind}_{P_k(\A_F)}^{G(\A_F)}[w_k(\pi)\otimes e^{\langle -s\gamma_k,H_{P_k}(\cdot)\rangle}]$$
$$\phi \mapsto M(s,\pi)\phi: g\mapsto \int_{(N(\A_F)\cap w_k N(\A_F)w_k^{-1})\backslash N(\A_F)} \phi(w_k^{-1}ng) \ dn,$$
cf.\ \cite{moewal}, II.1.6, already shortly mentioned at the end of \S \ref{sect:pc} above. It is well-known that $M(s,\pi)$ extends to a meromorphic operator in $s$ to all of $\C$ (\cite{moewal}, Thm.\ IV.1.8 or Sect.\ IV.3 therein) and that -- since it is the only non-trivial intertwining operator appearing in the constant term of $E_{P_k}(f,s\gamma_k)$ along $P_k$ -- the poles of the Eisenstein series $E_{P_k}(f,s\gamma_k)$ indeed coincide with the ones of $M(s,\pi)(f\cdot e^{\<s\gamma_k+\rho_{P_k},H_{P_k}(\cdot)\>})$ (\cite{moewal}, Lem.\ I.4.10 and Prop.\ II.1.7.(i)). (The behaviour of holomorphy of $M(s,\pi)$ as a function in $s$ is hence again independent of the choice of $w_k$, whence we have omitted it in its notation.)\\\\
It is clear that $M(s,\pi)$ breaks as a restricted tensor product $M(s,\pi)=\otimes'_\vv M(s,\pi_\vv)$ with respect to the normalized spherical vectors $f^\circ_\vv\in {\rm Ind}_{P_k(F_\vv)}^{G(F_\vv)}[\pi_\vv\otimes e^{\langle s\gamma_k,H_{P_k}(\cdot)\rangle}]$ (resp.\ $\tilde f^\circ_\vv\in {\rm Ind}_{P_k(F_\vv)}^{G(F_\vv)}[w_k(\pi)_\vv\otimes e^{\langle -s\gamma_k,H_{P_k}(\cdot)\rangle}]$) at those non-archimdean places $\vv\in S$, where $\pi$ (or, equivalently, $w_k(\pi)$) is unramified. 

\begin{prop}\label{prop:Msp}
Let $P_k=L_kN_k$ be a standard maximal parabolic $F$-subgroup of $G$, $1\leq k\leq r$, and let $\pi=\tau\hat\otimes\sigma$ be a unitary cuspidal automorphic representation of $L_k(\A_F)\cong \GL_k(\A_E)\times G_{n-2k}(\A_F)$. If $\tau_\infty$ is tempered and $\sigma$ satisfies the conditions of Thm.\ \ref{thm:qtcusp}, i.e., $\sigma$ is cohomological and $\Pi:=BC(\sigma)$ is the isobaric sum $\Pi=\Pi_1\boxplus...\boxplus\Pi_l$ of cuspidal automorphic representations $\Pi_i$, then the poles of $M(s,\pi)$ in the region $\Re e(s)\geq \tfrac12$ are precisely the ones of  
$$ L(s,\tau\times \Pi^\vee) \ L(2s, \tau, {\rm As}^{(-1)^n}).$$
\end{prop}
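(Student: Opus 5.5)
The plan is the Langlands--Shahidi method. First, on a large finite set $T\supseteq S_\infty$ of places outside of which $\pi$, $E/F$ and all data are unramified, the Gindikin--Karpelevich formula (Langlands' computation of constant terms, cf.\ \cite{moewal}, II.1.7) gives
$$M(s,\pi)f^\circ = \frac{L^T(s,\tau\times \Pi^\vee)\,L^T(2s,\tau,{\rm As}^{(-1)^n})}{L^T(s+1,\tau\times \Pi^\vee)\,L^T(2s+1,\tau,{\rm As}^{(-1)^n})}\,\tilde f^\circ .$$
Here I use the adjoint action of ${}^LL_k$ on ${}^L\n_k$, which has exactly two irreducible pieces for unitary groups: the standard tensor of $\GL_k$ with the base-changed $\GL_{n-2k}$, and the Asai representation of sign $(-1)^n$ (matching the $\epsilon$-convention). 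The unramified $L$-factors of the $G_{n-2k}$-part are expressed through $BC(\sigma)=\Pi$ via Thm.\ \ref{thm:lift}. Unitarity of $\sigma$ and conjugate self-duality let me write $\Pi^\vee$ or ${}^c\Pi$ interchangeably. I then pass from partial to complete $L$-functions by inserting the local factors at $T$. This yields the normalization
$$M(s,\pi)=r(s)\,N(s,\pi),\qquad r(s)=\frac{L(s,\tau\times\Pi^\vee)L(2s,\tau,{\rm As}^{(-1)^n})}{L(s+1,\tau\times\Pi^\vee)L(2s+1,\tau,{\rm As}^{(-1)^n})}\,\epsilon\text{-factors},$$
with $N(s,\pi)=\otimes_\vv N(s,\pi_\vv)$ the local normalized operators.

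Next I treat the denominator. Since $\Pi=\boxplus\Pi_i$ with each $\Pi_i$ cuspidal (Thm.\ \ref{thm:qtcusp}), the Rankin--Selberg product factors as $L(s,\tau\times\Pi^\vee)=\prod_i L(s,\tau\times\Pi_i^\vee)$. By Jacquet--Shalika, Shahidi and Shalika, each factor is nonvanishing on $\Re e(s)\geq1$, so $L(s+1,\tau\times\Pi^\vee)$ has neither zeros nor poles for $\Re e(s)\geq\tfrac12$ (indeed for $\Re e(s)\ge 0$). The Asai $L$-function $L(2s+1,\tau,{\rm As}^\pm)$ is non-vanishing for $\Re e(2s+1)\geq 1$ by Shahidi's nonvanishing theorem for Langlands--Shahidi $L$-functions; the Asai $L$-function arises from the Siegel-type parabolic of a quasi-split unitary group. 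Hence in $\Re e(s)\ge\tfrac12$ the poles of $r(s)$ are exactly those of the numerator. One caveat: the archimedean and bad local factors in the numerator only contribute poles in $\Re e(s)<\tfrac12$, which requires temperedness of $\tau_\infty$ and boundedness toward Ramanujan at finite places (Luo--Rudnick--Sarnak bounds suffice for $\tau$, and $\Pi_\vv$ is quasi-tempered by Thm.\ \ref{thm:qtcusp}).

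The main step is to show that the local normalized operators $N(s,\pi_\vv)$ are holomorphic and non-vanishing for $\Re e(s)\geq\tfrac12$, at every place, for suitably chosen sections (non-vanishing on some $K$-finite vector). At archimedean places I use temperedness of $\tau_\vv$ and of $\sigma_\vv$ (Thm.\ \ref{thm:qtcusp}), together with the standard results of Arthur and Shahidi for tempered data. Then $N(s,\pi_\vv)$ is holomorphic and non-zero on $\Re e(s)>0$. At finite places $\sigma_\vv$ and $\tau_\vv$ need not be tempered, but $\tau_\vv$ is generic unitary with exponents $<\tfrac12$, and $\sigma_\vv$ has base change that is quasi-tempered. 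I would invoke Kim's result on holomorphy of normalized operators for generic data with small exponents, passing through the generic member of the local $L$-packet and the local Langlands parametrization of \cite{KMSW, mok}. This is the delicate point. The worry is that $\sigma_\vv$ itself may be non-generic even though its packet has a generic member. I would try to handle this via Arthur's normalization of intertwining operators for packets (the local intertwining relation of \cite{AGIKMS, KMSW}), which gives holomorphy of $N$ in the region where the parameter exponents are less than $\tfrac12$. Combining all three steps, the poles of $M(s,\pi)$ in $\Re e(s)\geq\tfrac12$ coincide with those of $L(s,\tau\times\Pi^\vee)L(2s,\tau,{\rm As}^{(-1)^n})$.
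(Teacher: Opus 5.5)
Your plan is the paper's proof: normalize $M(s,\pi)$ by the completed local factors $r(s,\pi_\vv)$ (the extra $\epsilon$-factors are harmless), show the local normalized operators are holomorphic and nonzero on $\Re e(s)\geq\tfrac12$, and use that the denominator is holomorphic and non-vanishing there. At archimedean places this comes from temperedness of $\pi_\vv$, at finite places from quasi-temperedness of $\tau_\ww$ and $\Pi_\ww$. The finite-place step you flag as delicate (possibly non-generic $\sigma_\vv$) is exactly what the paper imports from \cite{CKT}, Prop.\ A.7 (Ta\"ibi--Waldspurger), with \cite{zou} replacing \cite{mok} when $G_{n-2k}$ is not quasi-split at $\vv$.

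Your ``caveat'' on bad local factors is not needed, because the statement is about completed $L$-functions. It is also not justified as stated: the local Rankin--Selberg factor can have poles with real part up to the sum of the exponents of $\tau_\ww$ and $\Pi_\ww$, and Luo--Rudnick--Sarnak plus quasi-temperedness do not force that sum below $\tfrac12$.
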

\begin{proof}
In order to describe the poles of $M(s,\pi)$, we follow the general approach to normalize all the local intertwining operators $M(s,\pi_\vv)$. At a place $\vv\in S$, define $\tau_\vv:=\otimes_{\ww|\vv}\tau_\ww$ and $\Pi_\vv:=\otimes_{\ww|\vv}\Pi_\ww$ and let
$$r(s,\pi_\vv):=\frac{ L(s,\tau_\vv\times \Pi^\vee_\vv) \ L(2s, \tau_\vv, {\rm As}^{(-1)^n})}{ L(s+1,\tau_\vv\times \Pi^\vee_\vv) \ L(2s+1, \tau_\vv, {\rm As}^{(-1)^n})}.$$
We observe that if $\vv$ splits in $E$, i.e., if there are two places $\ww_1$ and $\ww_2$ above $\vv$, then this explicitly yields the expression
\begin{equation}\label{eq:Ls}
r(s,\pi_\vv)=\frac{ L(s,\tau_{\ww_1}\times \Pi^\vee_{\ww_1}) \ L(s,\tau_{\ww_2}\times \Pi^\vee_{\ww_2}) \ L(2s, \tau_{\ww_1}\times\tau_{\ww_2},)}{  L(s+1,\tau_{\ww_1}\times \Pi^\vee_{\ww_1}) \ L(s+1,\tau_{\ww_2}\times \Pi^\vee_{\ww_2}) \ L(2s+1, \tau_{\ww_1}\times\tau_{\ww_2},)}
\end{equation}
of local Rankin-Selberg $L$-functions. Regardless of the nature of place $\vv\in S$, we define the local normalized intertwining operator at $\vv$ as
$$N(s,\pi_\vv):=r(s,\pi_\vv)^{-1}\cdot M(s,\pi_\vv).$$
Given our assumptions on $\sigma$, the local components $\Pi_\ww$ of $\Pi=BC(\sigma)$ are quasi-tempered at all non-achimedean places $\ww\in \Sigma$, see Thm.\ \ref{thm:qtcusp} above, and the same is true for $\tau_\ww$, as it is the local component of a unitary cuspidal automorphic representation (whence irreducible unitary and generic). Hence, if $\vv\in S$ is non-archimedean, then it was shown in Prop.\ A.7 of \cite{CKT} (see also their Rem.\ A.6) that $N(s,\pi_\vv)$ is holomorphic and not identically vanishing for $\Re e(s)\geq \tfrac12$. (In the proof of the aforementioned result, Ta\"ibi and Waldspurger refer to the local results of Mok, \cite{mok}, hence it is -- strictly speaking -- only applicable, if $G_{n-2k}$ is quasi-split over $F_\vv$. However, referring to \cite{zou} instead, their argument applies {\it verbatim} to all non-archimedean places.)\\\\
If $\vv\in S$ is archimedean, then Thm.\ \ref{thm:qtcusp} implies that $\pi_\vv\cong\tau_\vv\hat\otimes \sigma_\vv$ is a tempered representation of $L_k(F_\vv)$. Therefore, $M(s,\pi_\vv)$ is holomorphic and not identically zero for $\Re e(s)>0$, as it is the intertwining operator, whose image is the Langlands quotient attached to the Langlands datum $(P_k(F_\vv), \pi_\vv, s\cdot\gamma_k)$, cf.\  \cite{bowa}, IV.4.2--IV.4.4. The normalizing factor $r(s,\pi_\vv)$ -- essentially a finite product of $\Gamma$-functions -- is then easily seen to be holomorphic and non-zero for $\Re e(s)>0$: Indeed, let us first assume that $\vv\in S^{\sf in}_\R$, which, by \eqref{eq:Ls}, is the only instance, where the archimedean Asai-$L$-functions does not collapse to the Rankin-Selberg $L$-function. As usual, we abbreviate $\Gamma_{\mathbb R}(s):=\pi^{-\tfrac{s}{2}}\Gamma(\tfrac{s}{2})$, $\Gamma_{\mathbb C}(s):=2(2\pi)^{-s}\Gamma(s)$. Writing $\tau_\ww= z^{a_1}\overline{z}^{-a_1}\times ...\times z^{a_k}\overline{z}^{-a_k}$ and $\Pi_\ww=z^{b_1}\overline{z}^{-b_1}\times ...\times z^{b_{n-2k}}\overline{z}^{-b_{n-2k}}$ with $a_i,b_i\in \R$, cf.\ \S \ref{sect:tempGLcoh}, we get 
\begin{equation}\label{eq:Larch}
L(s,\tau_\ww\times\Pi_\ww^\vee)
=
\prod_{i=1}^k\prod_{j=1}^{n-2k}
\Gamma_{\mathbb C}
\left(
s+{|a_i-b_j|}
\right).
\end{equation}
and
$$
L(s,\tau_\ww,{\rm As}^{(-1)^n})
=
\prod_{1\le i<j\le k}
\Gamma_{\mathbb C}
\left(
s+{|a_i-a_j|}
\right)
\prod_{i=1}^k
\Gamma_{\mathbb R}(s+n-2\lfloor\tfrac{n}{2}\rfloor),
$$
which immediately implies that $r(s,\pi_\vv)$ is holomorphic and non-vanishing for $\Re e(s)>0$, if $\vv\in S^{\sf in}_\R$. Using \eqref{eq:Ls} and \eqref{eq:Larch} one reduces the remaining cases of split real and complex places $\vv\in S_\infty$ to the well-known properties of local archimedean Rankin-Selberg $L$-functions, which we leave to the reader. In summary, we have shown that the normalized intertwining operator $N(s,\pi_\vv)$ is holomorphic and not identically zero for $\Re e(s)>0$ at all archimedean places $\vv\in S$.\\\\
Forming the product over all $\vv\in S$ and observing that at a place, where $\pi_\vv$ is unramified, the normalized intertwining operator $N(s,\pi_\vv)$ just maps the normalized spherical vector $f^\circ_\vv\in {\rm Ind}_{P_k(F_\vv)}^{G(F_\vv)}[\pi_\vv\otimes e^{\langle s\gamma_k,H_{P_k}(\cdot)\rangle}]$ to its counterpart $\tilde f^\circ_\vv\in {\rm Ind}_{P_k(F_\vv)}^{G(F_\vv)}[w_k(\pi)_\vv\otimes e^{\langle -s\gamma_k,H_{P_k}(\cdot)\rangle}]$, we obtain that for $\Re e(s)\geq\frac12$ the poles of $M(s,\pi)$ are the same as of
$$r(s,\pi):=\prod_{\vv\in S}r(s,\pi_\vv)=\frac{ L(s,\tau\times \Pi^\vee) \ L(2s, \tau, {\rm As}^{(-1)^n})}{ L(s+1,\tau\times \Pi^\vee) \ L(2s+1, \tau, {\rm As}^{(-1)^n})}.$$
Since the denominator 
$$L(s+1,\tau\times \Pi^\vee) \ L(2s+1, \tau, {\rm As}^{(-1)^n})=\prod_{i=1}^l L(s+1,\tau\times \Pi_i^\vee) \cdot L(2s+1, \tau, {\rm As}^{(-1)^n})$$ 
is holomorphic and non-zero in the region considered, $\Re e(s)\geq\frac12$, see \cite{cogdel}, Thm.\ 4.2 and Thm.\ 4.3 (for the Rankin-Selberg $L$-functions) and \cite{grbacshahidi}, Thm.\ 4.3 (for the Asai $L$-functions), the result finally follows.
\end{proof}

\begin{rem}
We point out that, if $n-2k=0$, then the above result is simply to be read that the first factor in the product $ L(s,\tau\times \Pi^\vee) \ L(2s, \tau, {\rm As}^{(-1)^n})$ collapses into $1$. See also \cite{grbacshahidi}, Thm.\ 2.1, where the case of $k=\lfloor \frac{n}{2}\rfloor$ was considered in details. This is due to our convenient (but maybe slightly less conventional) choice of a basis vector of $ \check\a_{P,\C}$, which we have used in order to obtain the complex parameter $s$ in the intertwining operator $M(s,\pi)$: For this we have chosen $\gamma_k$ of instead of $\tilde\alpha_k:=(\rho_{P_k},\alpha^\vee_k)^{-1}\cdot\rho_{P_k}$, as it would be suggested by \cite{shahidi2}, p.\ 552. Our choice amounts to a ``uniform shape'' of the normalizing factors $r(s,\pi)$, i.e., independent of $k$, and enabled us to use \cite{CKT}, where the same approach was taken (in fact, as also in the aforementioned \cite{grbacshahidi}). We remark that passing from our choice of basis vector $\gamma_k$ to $\tilde\alpha_k$ would amount to a rescaling the argument $s$ of $M(s,\pi)$ by the (unusual) factor $\tfrac14$, if $k=r$ and $\ell_{P_0}\geq 1$, and by $\tfrac12$, else, since
\begin{equation*}\label{eq:alphak}
\tilde\alpha_k=\left\{\begin{array}{ll}
 \tfrac12 (\varepsilon_1+\cdots+\varepsilon_k) & \textrm{if $k=r$ and $\ell_{P_0}\geq 1$} \\
 \varepsilon_1+\cdots+\varepsilon_k& \textrm{else.}
\end{array}
\right.
\end{equation*}
\end{rem}
\noindent 
Next, for $n,k\geq 1$ as above, recall the quadratic character $\epsilon: \GL_k(F)\backslash \GL_k(\A_F)\ra\C^*$, from \S \ref{sect:nf}. As usual, a cuspidal automorphic representation $\tau$ of $\GL_k(\A_E)$ is called $\epsilon$-{\it distinguished}, if 
$$\Hom_{\GL_k(\A_F)}(\tau|_{\GL_k(\A_F)},\epsilon)\neq\{0\}.$$ 
With this notion at hand, we get

\begin{cor}\label{cor:holMspi}
Under the assumptions of Prop.\ \ref{prop:Msp}, the poles of $M(s,\pi)$ for a real $s{\geq\frac12}$ are at $s=\tfrac12,1$. The pole at $s=\tfrac12$ occurs, if and only if $\tau$ is conjugate self-dual, $\epsilon$-distinguished and $L(\tfrac12,\tau\times \Pi^\vee)\neq 0$; whereas the pole at $s=1$, occurs, if and only if $\tau=\Pi_i$ for some $1\leq i\leq l$. Both poles are, if they occur, simple.
\end{cor}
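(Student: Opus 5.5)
By Prop.\ \ref{prop:Msp}, for real $s\geq\tfrac12$ the poles of $M(s,\pi)$ are exactly those of
$$L(s,\tau\times \Pi^\vee)\, L(2s,\tau,{\rm As}^{(-1)^n})=\prod_{i=1}^l L(s,\tau\times\Pi_i^\vee)\cdot L(2s,\tau,{\rm As}^{(-1)^n}).$$
The plan is to locate the poles of each factor on the real half-line $s\geq\tfrac12$ separately. For the Rankin--Selberg factors we use \cite{cogdel}: for unitary cuspidal $\tau$ and $\Pi_i$, the completed $L$-function $L(s,\tau\times\Pi_i^\vee)$ is entire unless $\Pi_i\cong\tau\otimes\|\cdot\|_E^{it}$. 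Its only poles are then simple and sit at $s=it$ and $s=1+it$. A real pole with $s\geq\tfrac12$ therefore forces $t=0$ and $s=1$, i.e.\ $\tau\cong\Pi_i$. Since the $\Pi_i$ are pairwise distinct by Thm.\ \ref{thm:qtcusp}, at most one factor in the product contributes, and that contribution is simple.

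For the Asai factor we use \cite{grbacshahidi}, Thm.\ 4.3, in combination with the standard factorization
$$L(s,\tau\times{}^c\tau)=L(s,\tau,{\rm As}^+)\,L(s,\tau,{\rm As}^-).$$
The function $L(2s,\tau,{\rm As}^{\pm})$ has no poles for real $s>\tfrac12$. At $2s=1$ it can have at most a simple pole, and only when $\tau\cong{}^c\tau^\vee$. Among the two signs, exactly one Asai $L$-function has the pole there. Which one is governed by distinction: $L(s,\tau,{\rm As}^+)$ has a pole at $s=1$ if and only if $\tau$ is $\GL_k(\A_F)$-distinguished, and $L(s,\tau,{\rm As}^-)$ has one if and only if $\tau$ is $\varepsilon_{E/F}\circ\det$-distinguished (Flicker, Kable). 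Matching these with the sign $(-1)^n$ and the definition of $\epsilon$ shows that $L(2s,\tau,{\rm As}^{(-1)^n})$ has a simple pole at $s=\tfrac12$ exactly when $\tau$ is conjugate self-dual and $\epsilon$-distinguished. This bookkeeping of signs is the one place where care is needed. The normalization is that the sign $(-1)^n$ attached to $\epsilon$ must agree with the convention used for $r(s,\pi)$.

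Combining these facts gives the behaviour at the two points. At $s=1$ the Asai factor $L(2,\tau,{\rm As}^{\pm})$ is holomorphic and non-zero, by absolute convergence of the Euler product together with the non-vanishing of the archimedean $\Gamma$-factors. The remaining factors $L(1,\tau\times\Pi_j^\vee)$ with $\Pi_j\not\cong\tau$ are non-zero by \cite{cogdel}, Thm.\ 4.3. Hence the pole at $s=1$ is simple and occurs if and only if $\tau=\Pi_i$ for some $i$.

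At $s=\tfrac12$ no Rankin--Selberg factor has a pole, since those poles lie only at $s=0,1$. The order of the pole there is therefore the order of the Asai pole, which is $1$ or $0$, minus the vanishing order of the entire function $L(s,\tau\times\Pi^\vee)$ at $s=\tfrac12$. The main obstacle is this step: a simple Asai pole could a priori be cancelled by a central zero, and conversely the stated criterion needs exactly $L(\tfrac12,\tau\times\Pi^\vee)\neq0$. Both directions follow from this computation, because the product has a pole at $s=\tfrac12$ if and only if the Asai pole exists and the central value is non-zero. Since any pole has order at most that of the Asai factor, it is simple. Finally, no real $s\geq\tfrac12$ other than $\tfrac12$ and $1$ can be a pole, which completes the argument.
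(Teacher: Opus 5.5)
Your argument has the same structure as the paper's proof. You factor the normalizing $L$-function, use \cite{cogdel} for the Rankin--Selberg factors, and use \cite{grbacshahidi}, Thm.\ 4.3, with Flicker's distinction criterion for the Asai factor. The Rankin--Selberg analysis, simplicity via the distinctness of the $\Pi_i$, non-vanishing on $[1,\infty)$ and the order count at $s=\tfrac12$ are all fine.

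The gap is the step you postpone as ``bookkeeping of signs'': it does not work. Your own (correct) dictionary says that $L(s,\tau,{\rm As}^+)$ has a pole at $s=1$ iff $\tau$ is $\triv$-distinguished, and $L(s,\tau,{\rm As}^-)$ iff $\tau$ is $\varepsilon_{E/F}\circ\det$-distinguished. With the sign $(-1)^n$ this gives $\triv$-distinction for $n$ even and $\varepsilon_{E/F}\circ\det$-distinction for $n$ odd. That is precisely the opposite of the definition of $\epsilon$ in Sect.\ \ref{sect:nf}, so with $\epsilon$ as defined there the criterion at $s=\tfrac12$ is false.

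A minimal counterexample is the quasi-split $U(V_2)$ ($V_2$ a hyperbolic plane), $n=2$, $k=1$, $n-2k=0$, $\tau=\triv$. Here $L(2s,\triv,{\rm As}^+)=\zeta_F(2s)$ has a pole at $s=\tfrac12$; the residue of the Eisenstein series at $\lambda=\rho_{P_1}=\tfrac12\gamma_1$ is the constant function. Yet $\triv$ is not $\varepsilon_{E/F}$-distinguished. Arthur parameters predict the same thing. The residue has parameter $\tau\boxtimes S_2\boxplus\Pi$, and each summand must have sign $(-1)^{n-1}$. Since $S_2$ is symplectic, $\tau$ must have sign $(-1)^n$: conjugate-orthogonal ($\triv$-distinguished) for $n$ even, conjugate-symplectic ($\varepsilon_{E/F}\circ\det$-distinguished) for $n$ odd.

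The paper's proof contains the identical slip. It correctly derives ``$\varepsilon_{E/F}$-distinction $\Leftrightarrow$ pole of ${\rm As}^-$'' and then asserts that this ``shows the just made claim about $\epsilon$-distinction''. What needs correcting is the statement, not the method. Your argument and the paper's both prove the corollary once the parity in the definition of $\epsilon$ is swapped: $\epsilon=\triv$ for $n$ even and $\epsilon=\varepsilon_{E/F}\circ\det$ for $n$ odd. Equivalently, read ``$\epsilon$-distinguished'' as ``$L(s,\tau,{\rm As}^{(-1)^n})$ has a pole at $s=1$''. You should carry out the matching explicitly and state this correction, rather than asserting that the signs agree.

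A minor point: Flicker's criterion concerns partial $L$-functions. To pass to completed ones, either cite \cite{MOY}, Thm.\ 1.1, as the paper does, or note that the omitted local factors are holomorphic and non-zero at $s=1$. This holds because the local components are quasi-tempered, resp.\ tempered.
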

\begin{proof}
Let $s\in\R_{\geq 0}$, and let us factorize as above 
\begin{equation}\label{eq:Lss}
L(s,\tau\times \Pi^\vee) \ L(2s, \tau, {\rm As}^{(-1)^n})=\prod_{i=1}^l L(s,\tau\times \Pi_i^\vee) \cdot L(2s, \tau, {\rm As}^{(-1)^n}).
\end{equation}
We first look at the factors on the right hand side of \eqref{eq:Lss}: It is well-known that $L(s,\tau\times\Pi_i^\vee)$ is entire, if $k\neq n_i$, and otherwise has simple poles precisely at $s=0,1$, which occur, if and only if $\tau=\Pi_i$, see \cite{cogdel}, Thm.\ 4.2. By Thm.\ 4.3, {\it ibidem} it is non-vanishing for $s\geq 1$. Similarly, $L(s, \tau, {\rm As}^{(-1)^n})$ is entire, if $\tau$ is not conjugate self-dual and otherwise has a simple poles precisely at $s=0,1$, which occur, if and only if $\tau$ is $\epsilon$-distinguished, see \cite{flickerzinoviev} in combination with \cite{grbacshahidi}, Thm.\ 4.3. Indeed, for this characterization of the existence of poles through $\epsilon$-distinction, we observe that by the result of Flicker-Zinoviev, in the form recalled for instance in
\cite{MOY}, Thm.\ 1.1.\ -- where, in contrast to the main result of \cite{flickerzinoviev}, the case of {\it completed} $L$-functions is treated -- a cuspidal
automorphic representation $\theta$ of \(\mathrm{GL}_k(\mathbb A_E)\) is \(\triv_{\mathrm{GL}_k(\mathbb A_F)}\)-distinguished, if and only if \(L(s,\theta, {\rm As}^+)\) has a pole at \(s=1\). Recalling our choice of a Hecke character \(\eta_{E/F} :E^*\backslash \A^*_E\ra\C^*\), which extends $\varepsilon_{E/F}$, i.e., such that $\left.\eta_{E/F}\right|_{\mathbb A_F^\times}=\varepsilon_{E/F}$, from Sect.\ \ref{sect:nf}, the \(\varepsilon_{E/F}\)-distinction of \(\tau\) is equivalent to the \(\triv_{\mathrm{GL}_k(\mathbb A_F)}\)-distinction of \(\tau\otimes\eta_{E/F}^{-1}\), while $L(s,\tau\otimes\eta_{E/F}^{-1},{\rm As}^+)=L(s,\tau,{\rm As}^-)$, which shows the just made claim about $\epsilon$-distinction. We recall furthermore that by \cite{grbacshahidi}, Thm.\ 4.3, $L(s, \tau, {\rm As}^{(-1)^n})$ is also known to be non-vanishing for $s\geq 1$.\\\\ 
Now, let $s\geq \tfrac12$. Looking at the left hand side of \eqref{eq:Lss}, by what we have just observed, the factor $L(s,\tau\times \Pi^\vee)$ has a pole precisely at $s=1$ and it occurs, if and only if $\tau=\Pi_i$ for some $1\leq i\leq l$. Furthermore, by what we have just recalled, this singularity will survive in the product \eqref{eq:Lss}, i.e., the pole of $L(s,\tau\times \Pi^\vee)$ at $s=1$ will not be cancelled by some zero of the second factor in \eqref{eq:Lss}. Moreover, this pole is simple, which is a consequence of our Thm.\ \ref{thm:qtcusp}, which implies that $\Pi$ being the cohomological base change of $\sigma$, it has no repeating isobaric summands, i.e., $\Pi_i\neq\Pi_j$, for $i\neq j$.\\\\ In addition, our above observations show that $L(2s, \tau, {\rm As}^{(-1)^n})$ has a pole precisely at $s=\tfrac12$, and this singularity occurs, if and only if $\tau={}^c\tau^\vee$ is $\epsilon$-distinguished. Clearly, it remains a pole of $L(s,\tau\times \Pi^\vee) \ L(2s, \tau, {\rm As}^{(-1)^n})$, if and only if $L(\tfrac12,\tau\times \Pi^\vee)\neq 0$, in which case it is simple by the just quoted \cite{grbacshahidi}, Thm.\ 4.3.\\\\ Applying Prop.\ \ref{prop:Msp} now shows the claim. 
\end{proof}

\begin{rem}
We refer to \cite{ckpssh, mok, zou} for the relation of $\epsilon$-distinction and base change. 
\end{rem}

\begin{thm}\label{thm:Poles}
For a standard maximal parabolic $F$-subgroup $P_k$, $1\leq k\leq r$, let $\varphi_{P_k}$ be an associate class, represented by a cuspidal automorphic representation $\pi_{\lambda_0}=\pi\otimes e^{\langle \lambda_0,H_{P}(\cdot)\rangle}$ of $L_k(\A_F)$ as in Sect.\ \ref{sect:pc}. We write $\pi=\tau\hat\otimes\sigma$ for a unitary cuspidal automorphic representation $\tau$ of $\GL_k(\A_E)$ and a unitary cohomological (cf.\ Lem.\ \ref{lem:varphis}) cuspidal automorphic representation $\sigma$ of $G_{n-2k}(\A_F)$ and we assume that $\Pi=BC(\sigma)$ satisfies the equivalent conditions of Thm.\ \ref{thm:qtcusp}, i.e., $\Pi=\Pi_1\boxplus...\boxplus\Pi_l$ for cuspidal automorphic representations $\Pi_i$, $1\leq i\leq l$. 

Then, there exists a smooth $K_\infty$-finite section $f\in I^G_P(\pi)$, such that the Eisenstein series $E_{P}(f,\lambda)$ has a pole at the attached point $\lambda_0\in\check\a_{P,\C}$, if and only if either
\begin{enumerate}
\item \underline{$\lambda_0=\tfrac12\gamma_k$}: $\tau$ is conjugate self-dual and $\epsilon$-distinguished and $L(\tfrac12,\tau\times \Pi^\vee)\neq 0$; or
\item \underline{$\lambda_0=\gamma_k$}: $\tau=\Pi_i$ for some $1\leq i\leq l$.
\end{enumerate}
In both cases, the poles of $E_P(f,\lambda)$ at $\lambda=\lambda_0$ are simple.
\end{thm}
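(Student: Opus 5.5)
The plan is to reduce the statement to Cor.\ \ref{cor:holMspi}, using the fact recalled at the end of \S\ref{sect:pc} and \S\ref{sect:intops}: for maximal $P=P_k$ the poles of $E_{P_k}(f,s\gamma_k)$ coincide with those of $M(s,\pi)(f\cdot e^{\langle s\gamma_k+\rho_{P_k},H_{P_k}(\cdot)\rangle})$. This rests on \cite{moewal}, Lem.\ I.4.10 and Prop.\ II.1.7.(i). Indeed, the constant term along $P_k$ is $f_s+M(s,\pi)f_s$, and a pole of the Eisenstein series is detected by its cuspidal constant term along the only associate parabolic, which is $P_k$ itself.

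\emph{Step 1: hypotheses.} By Lem.\ \ref{lem:varphis}, $\tau_\infty$ is tempered and $\sigma$ is cohomological. Together with the assumption that $\Pi=BC(\sigma)$ has cuspidal summands, this is exactly what Prop.\ \ref{prop:Msp} and Cor.\ \ref{cor:holMspi} require.

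\emph{Step 2: locate the cohomological range of $\lambda_0$.} Condition (3) of \S\ref{sect:pc} places $-(\mu+\rho)$ in the Weyl orbit of $\chi_\pi+\lambda_0$, so $\lambda_0$ is real. Since we take the representative in the closed positive chamber, $\lambda_0=s_0\gamma_k$ with $s_0\ge 0$ real. The integrality of $\mu+\rho$ and of the infinitesimal characters of $\pi_\infty$ should force $s_0\gamma_k\in\tfrac12X^*(L_k)$, and then \eqref{halfint} gives $s_0\in\tfrac12\Z$. I expect this integrality bookkeeping to be the main obstacle. Rather than computing it explicitly, I would argue it abstractly: the infinitesimal character of $\pi_{\lambda_0}\otimes e^{\langle\rho_{P_k},H_{P_k}\rangle}$ equals that of an algebraic $L_\infty$-module $\mathcal F^\vee$, and its restriction to the split centre $A_{P_k}$ is an algebraic character, so $\lambda_0+\rho_{P_k}\in X^*(L_k)\otimes\tfrac12\Z$. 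Because $\rho_{P_k}=\tfrac{n-k}{2}\gamma_k$ is already in $\tfrac12X^*(L_k)$, this yields the claim.

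\emph{Step 3: case analysis in $s_0$.} If $s_0=0$, then $M(s,\pi)$ is holomorphic at $s=0$ by unitarity of the induced representation on the unitary axis, so there is no pole. If $s_0\ge\tfrac12$, Cor.\ \ref{cor:holMspi} says the only possible poles are at $s=\tfrac12$ and $s=1$, occurring exactly under conditions (1) and (2) respectively, and both are simple. Values $s_0\ge\tfrac32$ lie in the region of absolute convergence $\Re e(\lambda)\in\rho_P+\check\a_P^+$, or at least are excluded by Cor.\ \ref{cor:holMspi}, so they give no pole.

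\emph{Step 4: existence of the section $f$.} Corollary \ref{cor:holMspi} gives a pole of the operator $M(s,\pi)$, meaning its leading Laurent coefficient is a nonzero operator. Hence there is a $K_\infty$-finite $f$, which may be taken factorizable, with $M(s,\pi)f_s$ having a simple pole at $s_0$. This relies on the normalized operators $N(s,\pi_\vv)$ being holomorphic and not identically zero, as shown in the proof of Prop.\ \ref{prop:Msp}, and on $K_\infty$-finite vectors being dense. The constant term $f_s$ is holomorphic, so the pole cannot cancel against it, and $E_P(f,\lambda)$ has a simple pole at $\lambda_0$. Conversely, the order of the pole of $E_P$ is bounded by that of $M$, which gives simplicity in both cases.
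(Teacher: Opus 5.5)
Your overall architecture matches the paper's. You reduce to Cor.~\ref{cor:holMspi} via the coincidence of the poles of $E_{P_k}(f,s\gamma_k)$ with those of $M(s,\pi)$, you dispose of $s_0=0$ by holomorphy on the unitary axis (\cite{moewal}, Prop.~IV.1.11), and you get a section with a genuine simple pole from the non-vanishing of the normalized operators.

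The gap is in Step~2, on which the whole reduction rests. Algebraicity of the action of the centre on $\mathcal F$ does not give $\lambda_0+\rho_{P_k}\in\tfrac12X^*(L_k)$. The restriction map $X^*(L_k)\to X^*(A_{P_k})$ has image of index $2k$, because $\gamma_k(t)=N_{E/F}(\det(t\cdot 1_k))=t^{2k}$ on $A_{P_k}$. Comparing real parts of the central characters of $(\pi_{\lambda_0}\otimes e^{\langle\rho_{P_k},H_{P_k}(\cdot)\rangle})_\infty$ and $\mathcal F^\vee$ therefore only gives $2k(s_0+\tfrac{n-k}{2})\in\Z$ at a real place, and less at complex places. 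The same holds even if you use the full centre of the $\GL_k$-factor. So you get $s_0\in\tfrac1{2k}\Z$ at best, which is exactly the shape of \eqref{sw} before any further constraint is imposed; it produces values such as $\tfrac14$ for $k=2$.

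The missing ingredient is the unitarity of $\pi_\infty$. It forces the coefficient module to satisfy $\EE_{\mu_w}|_{\m_{k,\infty}}\cong\bar\EE^{\vee}_{\mu_w}|_{\m_{k,\infty}}$ (\cite{bowa}, I Cor.~4.2; \cite{bocas}, Lem.~1.3). This is a purity condition on the semisimple part of the highest weight. It is this condition that upgrades $\tfrac1{2k}\Z$ to $\lambda_0\in\tfrac12X^*(L_k)$, whence $s_0\in\tfrac12\Z$ by \eqref{halfint}. This is how the paper argues.

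The gap matters. Without half-integrality you cannot rule out $s_0\in(0,\tfrac12)$, and in that strip neither Prop.~\ref{prop:Msp} nor Cor.~\ref{cor:holMspi} says anything. The non-archimedean normalized operators are only known to be holomorphic for $\Re e(s)\ge\tfrac12$, since $\tau_\ww$ and $\Pi_\ww$ are merely quasi-tempered. So your argument does not establish the ``only if'' direction.

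A minor point: in Step~3 the region of absolute convergence is $s>\tfrac{n-k}{2}$, not $s\ge\tfrac32$. It is your fallback to Cor.~\ref{cor:holMspi} that actually handles $s_0\ge\tfrac32$.
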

\begin{proof}
Let us write $\lambda_0=s_0\cdot\gamma_k$. Since the ideal $\J$ of $\mathcal Z(\g)$ entering the definition of $\varphi_{P_k}$, cf.\ Sect.\ \ref{sect:pc}, was chosen to be the one, which annihilates the contragredient representation $\EE_\mu^\vee$ of $\EE_\mu$, we obtain that $s_0$ is a non-negative half-integer, i.e., $s_0\in \tfrac12 \Z_{\geq 0}$: In order to see this, we firstly  observe that, as explained on p.\ 772 and 774 of \cite{schwfr} or p.\ 1077 of \cite{grobner-EisRes}, $\lambda_0\in\overline{\check\a_{P_k}^{+}}$, so $s_0$ must be real and non-negative, i.e., $s_0\in\R_{\geq 0}$. Secondly, recalling the two facts that (i) $\pi\otimes e^{\langle \lambda_0+\rho_P,H_{P}(\cdot)\rangle}$ is cohomological with respect to an irreducible direct summand of the semisimple $L_{k,\infty}$-module $H^*(\n_{k,\infty},\EE_\mu)$, which was shown in the course of the proof of Lem.\ \ref{lem:varphis}, and that (ii) $\pi_\infty$ is irreducible and unitary (and hence conjugate self-dual), we obtain from \cite{bowa} I, Cor.\ 4.2 and \cite{bocas}, Lem.\ 1.3 that $\lambda_0\in \tfrac12 X^*(L_k)$. (Invoking condition (3) from Sect.\ \ref{sect:pc} above, the detailed argument for the latter claim is also carried out in the proof of Thm.\ 2.1 of \cite{CKT}.) Finally, our \eqref{halfint} implies $s_0\in \tfrac12 \Z_{\geq 0}$ as desired.\\\\ 
Keeping this in mind, we invoke \cite{moewal}, Prop.\ IV.1.11, which shows that $M(s,\pi)$ and (hence) $E_{P_k}(f,s\gamma_k)$ is holomorphic at $s=0$. We are hence reduced to studying the behaviour of holomorphy of the Eisenstein series $E_{P_k}(f,s\gamma_k)$ at points $s=s_0\geq\tfrac12$. However, we already observed that the poles of the Eisenstein series $E_{P_k}(f,s\gamma_k)$ indeed coincide with the ones of $M(s,\pi)(f\cdot e^{\<s\gamma_k+\rho_{P_k},H_{P_k}(\cdot)\>})$. Therefore, the result follows from Cor.\ \ref{cor:holMspi} (and Lem.\ \ref{lem:varphis}, which ensures that $\tau_\infty$ is tempered and $\sigma_\infty$ is cohomological, and hence Cor.\ \ref{cor:holMspi} can be applied to $\pi$) and the fact, already observed in course of the proof of Prop.\ \ref{prop:Msp}, that $N(s,\pi)$ and hence $M(s,\pi)$, does not vanish identically.
\end{proof}

\section{Kostant data}

\subsection{Highest weight modules and evaluation points}\label{sect:weylgrps}

For the local set of absolute roots $\Delta(\g_{\vv,\C},\h_{\vv,\C})$ at $\vv\in S_\infty$, cf\ Sect.\ \ref{sect:rlgrps}, we write $W_\vv:=W(\g_{\vv,\C},\h_{\vv,\C})$ for the corresponding Weyl group generated by the attached reflections. Given a standard parabolic $F$-subgroup $P$ of $G$, the set of {\it Kostant representatives} $W_\vv^{P}$ at $\vv\in S_\infty$ is then defined as the set of all $w_\vv\in W_\vv$ such that $w_\vv^{-1}(\alpha)>0$ for all simple roots $\alpha_\vv\in\Delta(\l_{\vv,\C},\h_{\vv,\C})$, cf.\ \cite{bowa}, III.1.4. For $\vv\in S_\C$, $\g_{\vv,\C}\cong \g\l_n(\C)\oplus \g\l_n(\C)$, whence $W_\vv^{P}$ splits as a product of two sets $W_\vv^{P}=W_{\iota_\vv}^{P}\times W_{\bar\iota_\vv}^{P}$ of formally the same shape (namely of the one at a $\vv\in S_\R$, if there is any such place). We get

\begin{lem}\label{lem:Kostant}
Let $P_k$ be the standard maximal parabolic $F$-subgroup of $G$ corresponding to $\alpha_k\in\Delta^0_F$, $1\leq k\leq r$. Let $\vv\in S_\R$. Then, $W_\vv=W(\g_{\vv,\C},\h_{\vv,\C})\cong \mathfrak S_n$ may be identified with the symmetric group of permutations of the set $\{1,...,n\}$ and 
\begin{equation}\label{eq:Kostant}
W_\vv^{P_k}=
\left\{
w \in \mathfrak S_n \;\middle|\;
\begin{aligned}
& w^{-1}(1) < \cdots < w^{-1}(k), \\
& w^{-1}(k+1) < \cdots < w^{-1}(n-k), \\
& w^{-1}(n-k+1) < \cdots < w^{-1}(n)
\end{aligned}
\right\}.
\end{equation}
Hence, $W_\vv^{P_k}$ is parametrized as a subset of $\mathfrak S_n$ by pairs $(I,J)$ of disjoint subsets
$I:=\{i_1, \cdots, i_k\}\subseteq\{1,\dots,n\}$ and $J:=\{j_1,...,j_{k}\}\subseteq\{1,\dots,n\}$, with $i_1<...<i_k$ and $j_1<...<j_k$, and complement
$\{1,\dots,n\}\setminus (I\cup J)=:\{r_1, ..., r_{n-2k}\}$, with $r_1< ...< r_{n-2k}$ via
$$
w_{I,J}(i_\ell)=\ell,\qquad w_{I,J}(j_\ell)=n-k+\ell, \qquad w_{I,J}(r_\ell)=k+\ell.
$$
If $\vv\in S_\C$, then $W_\vv^{P_k}$ is given by the product of two identical, but independent copies of sets of type \eqref{eq:Kostant}.
\end{lem}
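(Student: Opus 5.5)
We take the definition of Kostant representatives, $w\in W_\vv^{P_k}$ if and only if $w^{-1}(\alpha)>0$ for every simple root $\alpha$ of the Levi $\l_{k,\vv,\C}$, and make it explicit after complexification. The first step is to identify the complexified data at a real place $\vv\in S_\R$. Whether $\vv$ is split or inert, $\g_{\vv,\C}\cong\g\l_n(\C)$, and we may take $\h_{\vv,\C}$ to be the diagonal Cartan subalgebra. The Weyl group is then $\mathfrak S_n$, acting on the coordinates $e_1,\dots,e_n$ of $\check\h_{\vv,\C}$, and the absolute roots are $e_a-e_b$, $a\neq b$. We order the basis as in the matrix realization \eqref{eq:LPk}, i.e.\ adapted to the Witt decomposition $X\oplus V_{n-2k}\oplus X^\vee$. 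The positivity chosen in \S\ref{sect:rlgrps} extends the $F$-positivity, so we may choose it such that the positive roots are $e_a-e_b$ with $a<b$; on $\a_{0}$ the root $e_a-e_b$ restricts to $\varepsilon_a\pm\varepsilon_b$, $\varepsilon_a$, $2\varepsilon_a$ or $0$, in accordance with $\Delta_F^+$. In the inert case one must check that $\h_{\vv,\C}$ can be chosen diagonal in a basis compatible with the flag, with the anisotropic middle block contributing a compact part. This is the only point needing care, and I expect it to be the main (though mild) obstacle: one has to verify that the fixed compatible ordering really makes the upper triangular Borel contain $\p_{k,\vv,\C}$. I would argue via the hyperbolic basis $x_i,\,x_i^\vee$ together with an orthogonal basis of the middle block.

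With this set up, $\l_{k,\vv,\C}\cong\g\l_k(\C)\oplus\g\l_{n-2k}(\C)\oplus\g\l_k(\C)$ is the block-diagonal subalgebra with blocks $\{1..k\}$, $\{k+1..n-k\}$ and $\{n-k+1..n\}$. Here the last block is the complexification of the $({}^ch^t)^{-1}$ part, which over $\C$ becomes an independent $\g\l_k$. Its simple roots are therefore $e_a-e_{a+1}$ with $a,a+1$ in the same block. Since $w^{-1}(e_a-e_{a+1})=e_{w^{-1}(a)}-e_{w^{-1}(a+1)}$, this is positive exactly when $w^{-1}(a)<w^{-1}(a+1)$. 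Imposing this for all consecutive pairs inside each block gives exactly the three chains of inequalities in \eqref{eq:Kostant}.

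For the parametrization we set $I=w^{-1}(\{1..k\})$, $J=w^{-1}(\{n-k+1..n\})$ and let the complement be $w^{-1}(\{k+1..n-k\})$. The monotonicity conditions say precisely that $w^{-1}$ restricted to each block is the increasing bijection onto the corresponding set. This yields $w=w_{I,J}$ as stated. Conversely, every pair $(I,J)$ of disjoint $k$-subsets defines such a $w$, so the map $(I,J)\mapsto w_{I,J}$ is a bijection onto $W_\vv^{P_k}$.

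For $\vv\in S_\C$ we have $\g_{\vv,\C}\cong\g\l_n(\C)\oplus\g\l_n(\C)$, one factor for each of $\iota_\vv,\bar\iota_\vv$. The Cartan subalgebra, the roots and the Levi subalgebra all split accordingly, and simple roots of $\l$ lie in one factor or the other. The Kostant condition therefore decouples, giving $W_\vv^{P_k}=W_{\iota_\vv}^{P_k}\times W_{\bar\iota_\vv}^{P_k}$, where each factor is described by \eqref{eq:Kostant} by the argument above.
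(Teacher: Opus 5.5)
Your proposal is correct and is the computation the paper has in mind: the paper gives no argument beyond citing the analogous direct computation in \cite{gotsgro}, Prop.~2.1 (done there for odd orthogonal groups), and your blockwise unpacking of $w^{-1}(e_a-e_{a+1})>0$ is that computation, simpler here because $\g_{\vv,\C}\cong\g\l_n(\C)$ is of type $A$. The inert-place point you flag is harmless: $\a_{0,\vv}$ together with a compact Cartan subalgebra of its centralizer is diagonal in the basis $x_1,\dots,x_r$, an orthogonal basis of the middle block, $x_r^\vee,\dots,x_1^\vee$. In that basis $\mathfrak p_{k,\vv,\C}$ is block upper triangular and \emph{contains} the upper triangular Borel subalgebra; your text has this inclusion the wrong way round.
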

\begin{proof}
This follows as in \cite{gotsgro}, Prop.\ 2.1. 
\end{proof}
\noindent We let $W^{P}:=\prod_{\vv\in S_\infty} W^{P_\vv}$ and define an affine action of $W^P$ on $\check\h_{\infty,\C}$ by 
$$w\cdot\mu:=\mu_w:=w(\mu+\rho)-\rho.$$ 
Clearly, $\mu_w=(\mu_{w_\vv})_{\vv\in S_\infty},$ where $\mu_{w_\vv}:=w_\vv\cdot\mu_\vv:=w_\vv(\mu_\vv+\rho_\vv)-\rho_\vv$. Given a dominant integral weight $\mu=(\mu_\vv)_{\vv\in S_\infty}\in \check\h_{\infty}$ and $w\in W^P$, we let $\EE_{\mu_w}=\otimes_{\vv\in S_\infty} \EE_{\mu_{w_\vv}}$ be the irreducible representation of $L_{P,\infty}$ of highest weight $\mu_w$. These are precisely the irreducible direct summands of the semisimple $L_{P,\infty}$-module $H^*(\n_{P,\infty},\EE_\mu)$, mentioned already in the proofs of Lem.\ \ref{lem:varphis} and Thm.\ \ref{thm:Poles} above, which is a celebrated result of Kostant, cf.\ \cite{kostant}, Thm.\ 5.14.
In order to qualify to be a highest weight module, with respect to which a twisted representative $(\pi_{\lambda_0}\otimes e^{\langle \rho_P,H_{P}(\cdot)\rangle})_\infty$ of an associate class $\varphi_P$ of cuspidal automorphic representations of $L_P(\A)$ has non-trivial cohomology, \cite{bowa} I, Cor.\ 4.2 and \cite{bocas}, Lem.\ 1.3, enforce that we must have
$$\EE_{\mu_w}|_{\m_{P,\infty}}\cong \bar \EE^{\vee}_{\mu_w}|_{\m_{P,\infty}},$$
where $\bar \EE^{\vee}_{\mu_w}|_{\m_{P,\infty}}$ denotes the complex conjugate, contragredient representation of the $\m_{P,\infty}$-module $\EE_{\mu_w}|_{\m_{P,\infty}}$. (Recall that $\m_{P,\infty}$ denotes the real Lie algebra of $M_{P,\infty}=\bigcap_\xi \ker |\xi|$, $\xi:L_{P,\infty}\rightarrow\C^*$ running through the continuous characters.) In particular, we obtain
\begin{equation}\label{eq:complselfcont}
\EE_{\mu_{w_\vv}}|_{\m_{P,\vv}}\cong \bar \EE^{\vee}_{\mu_{w_\vv}}|_{\m_{P,\vv}}
\end{equation}
for all archimedean places $\vv\in S_\infty$.\\\\
Analogously, for $w\in W^P$ we let
$$\lambda_w:=-w(\mu+\rho)|_{\a_{P_\infty}},$$
which factors als $\lambda_w=(\lambda_{w_\vv})_{\vv\in S_\infty},$ where $\lambda_{w_\vv}:=-w_\vv(\mu_\vv+\rho_\vv)|_{\a_{P_\vv}}$. In order to serve as (the derivative) of the second entry of an associate class of cuspidal automorphic representations of $L_P(\A_F)$, we must have 
\begin{equation}\label{eq:proj=}
{\rm pr}_{\check\a_{P_\vv}\ra \check\a_{P}}(\lambda_{w_\vv})={\rm pr}_{\check\a_{P_{\vv'}}\ra \check\a_{P}}(\lambda_{w_{\vv'}})
\end{equation}
for all archimedean places $\vv,\vv'\in S_\infty$, i.e., the union
$$\bigcup_{\vv\in S_\R}\{\lambda_{w_{\iota_\vv}}\}\cup \bigcup_{\vv\in S_\C}\{\tfrac12 (\lambda_{w_{\iota_\vv}}+\lambda_{w_{\bar\iota_\vv}})\}$$
is a singleton, or, otherwise put, contains only one single vector, which is in fact an element of $\overline{\check\a_P^{+}}$. See also \cite{grobner-EisRes}, Prop.\ 10. By abuse of notation, we will again denote this vector by $\lambda_w$.\\\\
Let now $P=P_k$ be maximal parabolic, $1\leq k\leq r$, and write $\lambda_w= s_w\cdot \gamma_k$ for $w\in W^{P_k}$. Condition \eqref{eq:proj=} now translates into the following equality of sets,
$$\bigcup_{\vv\in S_\R}\{s_{w_{\iota_\vv}}\}\cup \bigcup_{\vv\in S_\C}\{\tfrac12 (s_{w_{\iota_\vv}}+s_{w_{\bar\iota_\vv}})\}=\{s_w\},$$
where $s_{w_{\iota_\vv}}$ is defined as the respective coefficient in  
$$-w_{\iota_\vv}(\mu_{\iota_\vv}+\rho_{\iota_\vv})|_{\a_{P_{\iota_\vv}}}=s_{w_{\iota_\vv}}\cdot \gamma_k.$$
Using our explicit Lem.\ \ref{lem:Kostant}, resp.\ \eqref{eq:Kostant}, for every ${\iota_\vv}: F\hra\C$, we obtain $W_{\iota_\vv}\cong S_n$ and a \(w_{\iota_\vv}\in W_{\iota_\vv}^{P_k}\) is given by a partition
$$
\{1,\dots,n\}=I\sqcup R\sqcup J
$$
with
$$
|I|=|J|=k,\qquad |R|=n-2k.
$$
Writing explicitly,
$$
\mu_{\iota_\vv}=(\mu_{{\iota_\vv},1},\dots,\mu_{{\iota_\vv},n}) \quad {\rm and}\quad \rho_{\iota_\vv}=\left(\tfrac{n-1}{2},\tfrac{n-3}{2},\dots,\tfrac{1-n}{2}\right)
$$
for a dominant integral highest weight of an irreducible finite-dimensional complex representation of $\g_{{\iota_\vv},\C}\cong \g\l_n(\C)$ and the half sum of positive absolute local roots at ${\iota_\vv}$, respectively, we get
$$
s_{w_{\iota_\vv}}
=
\frac{1}{2k}
\left(\sum_{j\in J}(\mu_{{\iota_\vv},j}-j)
-
\sum_{i\in I}(\mu_{{\iota_\vv},i}-i)\right),
$$
i.e., if we finally let $w\in W^{P_k}$ be represented in suggestive notation by an $S_\infty$-tuple of such partitions, $((I_\vv,J_\vv)_{\vv\in S_\R},((I_{\iota_\vv},J_{\iota_\vv}),(I_{\bar\iota_\vv},J_{\bar\iota_\vv}))_{\vv\in S_\C})$, then \fontsize{7.5}{10}\selectfont
\begin{equation}\label{sw}
\boxed{s_{w}
=
\begin{cases}
\displaystyle
\frac{1}{2k}
\left(\sum_{j_\vv\in J_\vv}(\mu_{{\vv},j_\vv}-j_\vv)
-
\sum_{i_\vv\in I_\vv}(\mu_{{\vv},i_\vv}-i_\vv)\right),
&\forall \vv\in S_\R,\\[16pt]
\displaystyle
\frac{1}{4k}
\left(\sum_{j_{\iota_\vv}\in J_{\iota_\vv}}(\mu_{{\iota_\vv},j_{\iota_\vv}}-j_{\iota_\vv})+\sum_{j_{\bar\iota_\vv}\in J_{\bar\iota_\vv}}(\mu_{{\bar\iota_\vv},j_{\bar\iota_\vv}}-j_{\bar\iota_\vv})
-
\left(\sum_{i_{\iota_\vv}\in I_{\iota_\vv}}(\mu_{{\iota_\vv},i_{\iota_\vv}}-i_{\iota_\vv})
+
\sum_{i_{\bar\iota_\vv}\in I_{\bar\iota_\vv}}(\mu_{{\bar\iota_\vv},i_{\bar\iota_\vv}}-i_{\bar\iota_\vv})\right)\right),
& \forall\vv\in S_\C.
\end{cases}}
\end{equation}\normalsize
If $\mu$ is the highest weight of an algebraic representation of $G_\infty$, it follows immediately from this description, that $s_w\in\Q$ for all $w\in W^{P_k}$, $1\leq k\leq r$. Moreover, if one also assumes that $\mu_w$ is the highest weight of a representation of $L_{P_k,\infty}$, with respect to which a twisted representative $(\pi_{\lambda_0}\otimes e^{\langle \rho_{P_k},H_{P_k}(\cdot)\rangle})_\infty$ of an associate class $\varphi_{P_k}$ of cuspidal automorphic representations of $L_{P_k}(\A_F)$ has non-trivial cohomology, then the conjugate self-duality condition \eqref{eq:complselfcont} implies the aforementioned much stronger property that 
$$s_w\in\tfrac12\Z.$$
In view of Thm.\ \ref{thm:Poles}, we shall be particularly interested in Kostant representatives $w\in W^{P_k}$, $1\leq k\leq r$, which satisfy \eqref{eq:complselfcont} and for which $s_w\in\{\tfrac12,1\}$

\subsection{A non-trivial example - Part I}
Before we continue with the construction of non-trivial residual Eisenstein cohomology classes, let us consider a non-trivial example, in order to illustrate the phenomena, which one may encounter when computing the points of evaluation $s_w\in\Q$, $w\in W^{P_k}$ as above. As just indicated, we shall particularly be interested in determining the Kostant representatives, $w\in W^{P_k}$, $1\leq k\leq r$, which satisfy \eqref{eq:complselfcont} and which give rise to $s_w\in\{\tfrac12,1\}$.

\subsubsection{}\label{sect:extex}
Let $F=\Q(\sqrt[3]{2})$ and let $E=F(\sqrt{-1})$. Then \(F\) has one real place, denoted $\vv_\R$, which is inert, and one complex place, denoted $\vv_\C$. Let \(V_6:=E^6\) with a hyperbolic hermitian form $h$, e.g., $h(x,y)=x^t\cdot J\cdot c(y)$, with $J$ being the matrix having $1$'s on the skew diagonal and $0$'s elswehere, and consider in this subsection only the attached unitary group $G=U(V_6,h)$. Then, $G$ is quasisplit over $F$ of $rk_F(G)=r=3$ and
$$
G(F_{\vv_{\R}})\cong U(3,3),
\qquad
G(F_{\vv_{\C}})\cong \GL_6(\C)
$$
as real Lie groups. Accordingly, we get $3$ maximal standard parabolic $F$-subgroups \(P_k\), \(k=1,2,3\) and we have, $\rho
=
\left(\frac52,\frac32,\frac12,-\frac12,-\frac32,-\frac52\right).$
Throughout this example, we use the trivial coefficient system, i.e., $\mu=0$. At the real archimedean place, a Kostant representative $w_{\vv_\R}\in W^{P_k}_{\vv_\R}$ is given by a partition
$$
\{1,2,3,4,5,6\}=I\sqcup R\sqcup J
$$
with
$$
|I|=|J|=k,\qquad |R|=6-2k.
$$
Define
$$
C(w_{\vv_\R}):=\sum_{i\in I} i-\sum_{j\in J} j.
$$
With respect to the basis \(\gamma_k\), the corresponding point of evaluation is $s_{w_{\vv_\R}}=\frac{1}{2k} C(w_{\vv_\R}).$ Therefore,
$$
s_{w_{\vv_\R}}=\tfrac12 \Longleftrightarrow C(w_{\vv_\R})=k,
\qquad
s_{w_{\vv_\R}}=1 \Longleftrightarrow C(w_{\vv_\R})=2k.
$$
At the complex archimedean place $\vv_\C$, a Kostant representative $w_{\vv_\C}\in W^{P_k}_{\vv_\C}$ is a pair
$$
w_{\vv_\C}=(w_+,w_-),
$$
with $w_\pm$ being represented by $(I_\pm,R_\pm,J_\pm)$. Putting
$$
C(w_\pm):=\sum_{i\in I_\pm}i-\sum_{j\in J_\pm}j, 
$$
we got to consider the point of evaluation $s_{w_{\vv_\C}}=\frac{1}{4k} (C(w_+)+C(w_-))$. Thus,
$$
s_{w_{\vv_\C}}=\tfrac12 \Longleftrightarrow C(w_+)+C(w_-)=2k,
\qquad
s_{w_{\vv_\C}}=1 \Longleftrightarrow C(w_+)+C(w_-)=4k.
$$

\subsubsection{Tables of representatives}\label{sect:tablesw}

The following three tables show the complete lists of all Kostant representatives $w\in W^{P_k}$, $k=1,2,3$, respectively, giving rise to evaluation points $s_w\in\{\tfrac12,1\}$ and for which the conjugate self-duality condition \eqref{eq:complselfcont} holds. Recall that in our concrete example, a Kostant representative splits as a tuple $w=(w_{\vv_{\mathbb R}},w_{\vv_{\mathbb C}})$ of length $\ell(w)=\ell(w_{\vv_{\mathbb R}})+\ell(w_{\vv_{\mathbb C}})$.\\\\
\small
\underline{$k=1$:} $L_1\cong {\rm Res}_{E/F}(\GL_1) \times U(V_4,h)$. 
\begin{center}
\tiny
\setlength{\tabcolsep}{2pt}\renewcommand{\arraystretch}{1.06}
\begin{longtable}{r p{1.78cm} c p{4cm} c | p{1.78cm} c p{4.15cm} c}
\toprule
& \multicolumn{4}{c|}{$s_{w_\bullet}=\frac12$} & \multicolumn{4}{c}{$s_{w_\bullet}=1$}\\
\midrule
& \(w_{\vv_{\mathbb R}}\)  & $\ell(w_{\vv_{\mathbb R}} )$ & \quad $w_{\vv_{\mathbb C}}=(w_+, w_-)$ & $\ell(w_{\vv_{\mathbb C}})$ & \ \(w_{\vv_{\mathbb R}}\)  & $\ell(w_{\vv_{\mathbb R}} )$ & \quad $w_{\vv_{\mathbb C}}=(w_+, w_-)$ & \(\ell(w_{v_{\mathbb C}})\)\\
\midrule
\endhead
& $(2,3,4,5,6,1)$ & 5 & $(2,3,4,5,6,1)$, $(2,3,4,5,6,1)$ & 10 &  \ \  $\emptyset$ &  \  $\emptyset$& $(2,3,4,6,5,1)$; $(6,2,1,3,4,5)$ & 12\\
& $(2,3,6,1,4,5)$ & 5 & $(2,3,4,5,6,1)$, $(6,1,2,3,4,5)$ & 10 &  &  & $(2,3,6,4,1,5)$; $(2,6,3,1,4,5)$ & 12\\
 & $(6,1,2,3,4,5)$ & 5 & $(2,3,4,6,1,5)$, $(2,6,1,3,4,5)$ & 10 &  &  & $(2,6,3,1,4,5)$; $(2,3,6,4,1,5)$ & 12\\
 &  &  & $(2,3,6,1,4,5)$, $(2,3,6,1,4,5)$ & 10 &  &  & $(6,2,1,3,4,5)$; $(2,3,4,6,5,1)$ & 12\\
 &  &  & $(2,6,1,3,4,5)$, $(2,3,4,6,1,5)$ & 10 &  &  & $(1,6,2,3,4,5)$; $(6,2,3,4,5,1)$ & 13\\
 &  &  & $(6,1,2,3,4,5)$, $(2,3,4,5,6,1)$ & 10 &  &  & $(2,3,4,5,1,6)$; $(6,2,3,4,5,1)$ & 13\\
 &  &  & $(6,1,2,3,4,5)$, $(6,1,2,3,4,5)$ & 10 &  &  & $(6,2,3,4,5,1)$; $(1,6,2,3,4,5)$ & 13\\
 &  &  & $(1,2,6,3,4,5)$, $(6,2,3,4,1,5)$ & 11 &  &  & $(6,2,3,4,5,1)$; $(2,3,4,5,1,6)$ & 13\\
 &  &  & $(2,3,4,1,5,6)$, $(2,6,3,4,5,1)$ & 11 &  &  &  & \\
 &  &  & $(2,6,3,4,5,1)$, $(2,3,4,1,5,6)$ & 11 &  &  &  & \\
 &  &  & $(6,2,3,4,1,5)$, $(1,2,6,3,4,5)$ & 11 &  &  &  & \\
\bottomrule
\end{longtable}
\normalsize
\end{center}
\newpage\small\enlargethispage{2cm}
\noindent\underline{$k=2$:} $L_2\cong {\rm Res}_{E/F}(\GL_2) \times U(V_2,h)$. 
\begin{center}
\tiny
\setlength{\tabcolsep}{2pt}\renewcommand{\arraystretch}{1.06}
\begin{longtable}{r p{1.78cm} c p{4cm} c | p{1.78cm} c p{4.15cm} c}
\toprule
& \multicolumn{4}{c|}{$s_{w_\bullet}=\frac12$} & \multicolumn{4}{c}{$s_{w_\bullet}=1$}\\
\midrule
& \(w_{\vv_{\mathbb R}}\)  & $\ell(w_{\vv_{\mathbb R}} )$ & \quad $w_{\vv_{\mathbb C}}=(w_+, w_-)$ & $\ell(w_{\vv_{\mathbb C}})$ & \ \(w_{\vv_{\mathbb R}}\)  & $\ell(w_{\vv_{\mathbb R}} )$ & \quad $w_{\vv_{\mathbb C}}=(w_+, w_-)$ & \(\ell(w_{v_{\mathbb C}})\)\\
\midrule
\endhead
1 & $(3,4,5,1,6,2)$ & 7 & $(3,4,5,1,2,6)$ ; $(3,4,5,6,1,2)$ & 14 & $(3,4,5,6,1,2)$ & 8 & $(3,4,5,6,1,2)$ ; $(3,4,5,6,1,2)$ & 16 \\
2 & $(3,5,1,4,6,2)$ & 7 & $(3,4,5,1,2,6)$ ; $(5,6,1,2,3,4)$ & 14 & $(3,5,6,1,2,4)$ & 8 & $(3,4,5,6,1,2)$ ; $(5,6,1,2,3,4)$ & 16 \\
3 & $(3,5,1,6,2,4)$ & 7 & $(3,4,5,1,6,2)$ ; $(3,4,5,1,6,2)$ & 14 & $(5,6,1,2,3,4)$ & 8 & $(3,5,6,1,2,4)$ ; $(3,5,6,1,2,4)$ & 16 \\
4 & $(5,1,3,4,6,2)$ & 7 & $(3,4,5,1,6,2)$ ; $(5,1,6,2,3,4)$ & 14 & $(5,1,6,3,4,2)$ & 9 & $(5,6,1,2,3,4)$ ; $(3,4,5,6,1,2)$ & 16 \\
5 & $(5,1,3,6,2,4)$ & 7 & $(3,4,5,6,1,2)$ ; $(3,4,5,1,2,6)$ & 14 & $(5,3,1,6,4,2)$ & 9 & $(5,6,1,2,3,4)$ ; $(5,6,1,2,3,4)$ & 16 \\
6 & $(5,1,6,2,3,4)$ & 7 & $(3,4,5,6,1,2)$ ; $(5,1,2,6,3,4)$ & 14 & $(5,3,4,1,6,2)$ & 9 & $(3,5,4,1,6,2)$ ; $(5,6,1,3,2,4)$ & 17 \\
7 & $(1,5,6,3,4,2)$ & 8 & $(3,5,1,2,4,6)$ ; $(5,3,6,1,2,4)$ & 14 &  &  & $(3,5,6,1,4,2)$ ; $(5,3,1,6,2,4)$ & 17 \\
8 & $(5,3,4,1,2,6)$ & 8 & $(3,5,1,2,6,4)$ ; $(3,5,6,1,2,4)$ & 14 &  &  & $(5,3,1,6,2,4)$ ; $(3,5,6,1,4,2)$ & 17 \\
9 &  &  & $(3,5,1,4,6,2)$ ; $(3,5,1,4,6,2)$ & 14 &  &  & $(5,6,1,3,2,4)$ ; $(3,5,4,1,6,2)$ & 17 \\
10 &  &  & $(3,5,1,4,6,2)$ ; $(5,1,3,6,2,4)$ & 14 &  &  & $(3,4,5,1,2,6)$ ; $(5,6,3,4,1,2)$ & 18 \\
11 &  &  & $(3,5,1,6,2,4)$ ; $(3,5,1,6,2,4)$ & 14 &  &  & $(3,5,4,1,2,6)$ ; $(5,3,6,4,1,2)$ & 18 \\
12 &  &  & $(3,5,4,6,1,2)$ ; $(5,1,2,3,6,4)$ & 14 &  &  & $(5,1,2,6,3,4)$ ; $(5,6,3,4,1,2)$ & 18 \\
13 &  &  & $(3,5,6,1,2,4)$ ; $(3,5,1,2,6,4)$ & 14 &  &  & $(5,1,6,3,4,2)$ ; $(5,1,6,3,4,2)$ & 18 \\
14 &  &  & $(5,1,2,3,4,6)$ ; $(5,3,4,6,1,2)$ & 14 &  &  & $(5,3,1,4,2,6)$ ; $(5,6,3,1,4,2)$ & 18 \\
15 &  &  & $(5,1,2,3,6,4)$ ; $(3,5,4,6,1,2)$ & 14 &  &  & $(5,3,1,6,4,2)$ ; $(5,3,1,6,4,2)$ & 18 \\
16 &  &  & $(5,1,2,6,3,4)$ ; $(3,4,5,6,1,2)$ & 14 &  &  & $(5,3,4,1,2,6)$ ; $(5,3,4,6,1,2)$ & 18 \\
17 &  &  & $(5,1,2,6,3,4)$ ; $(5,6,1,2,3,4)$ & 14 &  &  & $(5,3,4,1,6,2)$ ; $(5,3,4,1,6,2)$ & 18 \\
18 &  &  & $(5,1,3,2,4,6)$ ; $(5,6,1,3,2,4)$ & 14 &  &  & $(5,3,4,6,1,2)$ ; $(5,3,4,1,2,6)$ & 18 \\
19 &  &  & $(5,1,3,4,6,2)$ ; $(5,1,3,4,6,2)$ & 14 &  &  & $(5,3,6,4,1,2)$ ; $(3,5,4,1,2,6)$ & 18 \\
20 &  &  & $(5,1,3,6,2,4)$ ; $(3,5,1,4,6,2)$ & 14 &  &  & $(5,6,3,1,4,2)$ ; $(5,3,1,4,2,6)$ & 18 \\
21 &  &  & $(5,1,3,6,2,4)$ ; $(5,1,3,6,2,4)$ & 14 &  &  & $(5,6,3,4,1,2)$ ; $(3,4,5,1,2,6)$ & 18 \\
22 &  &  & $(5,1,6,2,3,4)$ ; $(3,4,5,1,6,2)$ & 14 &  &  & $(5,6,3,4,1,2)$ ; $(5,1,2,6,3,4)$ & 18 \\
23 &  &  & $(5,1,6,2,3,4)$ ; $(5,1,6,2,3,4)$ & 14 &  &  &  &  \\
24 &  &  & $(5,3,4,6,1,2)$ ; $(5,1,2,3,4,6)$ & 14 &  &  &  &  \\
25 &  &  & $(5,3,6,1,2,4)$ ; $(3,5,1,2,4,6)$ & 14 &  &  &  &  \\
26 &  &  & $(5,6,1,2,3,4)$ ; $(3,4,5,1,2,6)$ & 14 &  &  &  &  \\
27 &  &  & $(5,6,1,2,3,4)$ ; $(5,1,2,6,3,4)$ & 14 &  &  &  &  \\
28 &  &  & $(5,6,1,3,2,4)$ ; $(5,1,3,2,4,6)$ & 14 &  &  &  &  \\
29 &  &  & $(1,5,2,3,6,4)$ ; $(5,6,3,1,4,2)$ & 15 &  &  &  &  \\
30 &  &  & $(1,5,6,3,2,4)$ ; $(5,1,3,6,4,2)$ & 15 &  &  &  &  \\
31 &  &  & $(3,5,4,1,6,2)$ ; $(5,3,1,4,2,6)$ & 15 &  &  &  &  \\
32 &  &  & $(5,1,3,6,4,2)$ ; $(1,5,6,3,2,4)$ & 15 &  &  &  &  \\
33 &  &  & $(5,3,1,4,2,6)$ ; $(3,5,4,1,6,2)$ & 15 &  &  &  &  \\
34 &  &  & $(5,6,3,1,4,2)$ ; $(1,5,2,3,6,4)$ & 15 &  &  &  &  \\
35 &  &  & $(1,2,5,6,3,4)$ ; $(5,6,3,4,1,2)$ & 16 &  &  &  &  \\
36 &  &  & $(1,5,6,3,4,2)$ ; $(1,5,6,3,4,2)$ & 16 &  &  &  &  \\
37 &  &  & $(3,4,1,2,5,6)$ ; $(5,6,3,4,1,2)$ & 16 &  &  &  &  \\
38 &  &  & $(5,3,4,1,2,6)$ ; $(5,3,4,1,2,6)$ & 16 &  &  &  &  \\
39 &  &  & $(5,6,3,4,1,2)$ ; $(1,2,5,6,3,4)$ & 16 &  &  &  &  \\
40 &  &  & $(5,6,3,4,1,2)$ ; $(3,4,1,2,5,6)$ & 16 &  &  &  &  \\
\bottomrule
\end{longtable}
\end{center}\small
\underline{$k=3$:} $L_2\cong {\rm Res}_{E/F}(\GL_3)$. 
\begin{center}
\tiny
\setlength{\tabcolsep}{2pt}\renewcommand{\arraystretch}{1.06}
\begin{longtable}{r p{1.78cm} c p{4cm} c | p{1.78cm} c p{4.15cm} c}
\toprule
& \multicolumn{4}{c|}{$s_{w_\bullet}=\frac12$} & \multicolumn{4}{c}{$s_{w_\bullet}=1$}\\
\midrule
& \(w_{\vv_{\mathbb R}}\)  & $\ell(w_{\vv_{\mathbb R}} )$ & \quad $w_{\vv_{\mathbb C}}=(w_+, w_-)$ & $\ell(w_{\vv_{\mathbb C}})$ & \ \(w_{\vv_{\mathbb R}}\)  & $\ell(w_{\vv_{\mathbb R}} )$ & \quad $w_{\vv_{\mathbb C}}=(w_+, w_-)$ & \(\ell(w_{v_{\mathbb C}})\)\\
\midrule
\endhead
1 & $(4,1,5,2,6,3)$ & 6 & $(4,1,2,3,5,6)$ ; $(4,5,6,1,2,3)$ & 12 &  \ \  $\emptyset$ &  \  $\emptyset$& $(4,5,1,2,3,6)$ ; $(4,5,6,1,2,3)$ & 15 \\
2 & $(4,5,1,2,3,6)$ & 6 & $(4,1,2,5,3,6)$ ; $(4,5,1,6,2,3)$ & 12 &  &  & $(4,5,1,2,6,3)$ ; $(4,5,1,6,2,3)$ & 15 \\
3 &  &  & $(4,1,2,5,6,3)$ ; $(4,1,5,6,2,3)$ & 12 &  &  & $(4,5,1,6,2,3)$ ; $(4,5,1,2,6,3)$ & 15 \\
4 &  &  & $(4,1,5,2,3,6)$ ; $(4,5,1,2,6,3)$ & 12 &  &  & $(4,5,6,1,2,3)$ ; $(4,5,1,2,3,6)$ & 15 \\
5 &  &  & $(4,1,5,2,6,3)$ ; $(4,1,5,2,6,3)$ & 12 &  &  &  &  \\
6 &  &  & $(4,1,5,6,2,3)$ ; $(4,1,2,5,6,3)$ & 12 &  &  &  &  \\
7 &  &  & $(4,5,1,2,3,6)$ ; $(4,5,1,2,3,6)$ & 12 &  &  &  &  \\
8 &  &  & $(4,5,1,2,6,3)$ ; $(4,1,5,2,3,6)$ & 12 &  &  &  &  \\
9 &  &  & $(4,5,1,6,2,3)$ ; $(4,1,2,5,3,6)$ & 12 &  &  &  &  \\
10 &  &  & $(4,5,6,1,2,3)$ ; $(4,1,2,3,5,6)$ & 12 &  &  &  &  \\
\bottomrule
\end{longtable}
\end{center} 
\noindent Using \eqref{sw}, we summarize the above in the following table:
\small
\begin{table}[htbp]
\raggedright
\begin{tabular}{c c c c}
\toprule
max.\ $F$-parab. & $s_w$ & \#$w$'s & $\ell(w)$ $(\#w\text{'s of length }\ell(w))$\\
\midrule
$P_1$ & $\tfrac12$ & 33 & $15$ $(\#21)$, $16$ $(\#12)$ \\
& $1$ & 0 & $\varnothing$ \\ 
\midrule
$P_2$ & $\tfrac12$ & 320 & $21$ $(\#168)$, $22$ $(\#92)$, $23$ $(\#48)$, $24$ $(\#12)$ \\
& $1$ & 132 & $24$ $(\#15)$, $25$ $(\#27)$, $26$ $(\#51)$, $27$ $(\#39)$ \\ 
\midrule
$P_3$ & $\tfrac12$ & 20 & $18$ $(\#20)$ \\
& $1$ & 0 & $\varnothing$ \\
\bottomrule
\end{tabular}
\caption{\small Kostant representatives giving rise to the evaluation points \(s_w=\tfrac12\) and \(s_w=1\).}
\label{tab:kostant-representatives-evaluation-points}
\end{table}

\FloatBarrier
\normalsize

\section{Automorphic Cohomology}

\subsection{A few generalities about residual classes}\label{sect:residualclasses}

Applying the functor of $(\g,K_G)$-cohomology (or, equivalently, $(\g,\k_\infty)$-cohomology) to \eqref{eq:parabsuppdec} and \eqref{eq:cuspsuppdec}, induces an algebraic direct sum decomposition as $G(\A_f)$-module
\begin{equation}\label{eq:cohdec}
	H^q(\g,K_G, \mathcal{A}^\infty_{\J}(G)\otimes \EE_\mu) \cong \bigoplus_{\{P\}}\bigoplus_{\varphi_P} H^q(\g,K_G, \mathcal A^\infty_{\mathcal J,\{P\},\varphi_P}(G)\otimes\EE_\mu)
\end{equation}
which just amounts to a ``smooth-automorphic'' rephrasing of \cite{schwfr}, Thm.\ 2.3, which is the original source of this result. Decompose the discrete spectrum of $G(\A_F)$ as an orthogonal direct sum
$$L^2_{dis}(G(F)\backslash G(\A_F))=L^2_{cusp}(G(F)\backslash G(\A_F))\oplus_{\perp} L^2_{res}(G(F)\backslash G(\A_F))$$
and put $\mathcal{A}^\infty_{\J,res}(G):=\mathcal A^\infty_{\mathcal J}(G)\cap L^2_{res}(G(F)\backslash G(\A_F))$. A {\it residual automorphic cohomology class} is by definition a class in the image of the natural $G(\A_f)$-homomorphism in cohomology 
\begin{equation}\label{eq:rescoh}
	r^q: H^q(\g,K_G, \mathcal{A}^\infty_{\J,res}(G)\otimes \EE_\mu) \longrightarrow H^q(\g,K_G, \mathcal{A}^\infty_{\J}(G)\otimes \EE_\mu)
\end{equation}
given by the inclusion $\mathcal{A}^\infty_{\J,res}(G)\hookrightarrow\mathcal{A}^\infty_{\J}(G)$. Since $\mathcal A^\infty_{\mathcal J,\{G\}}(G)=\mathcal A^\infty_{\mathcal J}(G)\cap L^2_{cusp}(G(F)\backslash G(\A_F))$, cf.\, e.g., \cite{grob_book}, Lem.\ 15.1, residual automorphic cohomology is contained in 
\begin{equation}\label{eq:nonp}
\bigoplus_{\{P\}\neq \{G\}}\bigoplus_{\varphi_P} H^q(\g,K_G, \mathcal A^\infty_{\mathcal J,\{P\},\varphi_P}(G)\otimes\EE_\mu).
\end{equation}
In fact, setting 
$$\mathcal A^\infty_{res,\varphi_P}(G):=\mathcal{A}^\infty_{\J,res}(G)\cap\mathcal A^\infty_{\mathcal J,\{P\},\varphi_P}(G),$$ 
then the decompositions \eqref{eq:parabsuppdec} and \eqref{eq:cuspsuppdec} induce a decomposition
\begin{equation}\label{eq:nonp}
\mathcal{A}^\infty_{\J,res}(G)\cong \bigtoplus_{\{P\}\neq \{G\}}\bigtoplus_{\varphi_P} \mathcal A^\infty_{res,\varphi_P}(G).
\end{equation}
Each summand on the right hand side is a direct sum
\begin{equation}\label{eq:irrdecA} 
\mathcal A^\infty_{res,\varphi_P}(G)\cong \bigtoplus_{i\in I_{\varphi_P}} \mathcal H^{\infty_\A}_i
\end{equation}
stemming from partitioning the countable indexing set $I_{res}$ in the decomposition
$$L^2_{res}(G(F)\backslash G(\A_F))\cong \widehat{\bigoplus_{i\in I_{res}}}\ \mathcal H_i$$
into a direct Hilbert sum over a (suitable) countable family of irreducible unitary subrepresentations of $L^2_{dis}(G(F)\backslash G(\A_F))$, accordingly: $I_{res}=\bigsqcup_{\{P\}\neq \{G\}, \varphi_P} I_{\varphi_P}.$ Then, each space $\mathcal A^\infty_{res,\varphi_P}(G)$ consists of irreducible smooth $G(\A_F)$-representations on $LF$-spaces, which are all nearly equivalent, i.e., are all contained in the same near equivalence class of irreducible smooth-automorphic representations of $G(\A_F)$. 
If Arthur's Multiplicity Formula holds true for the discrete automorphic spectrum of $G$, then none of these irreducible $G(\A_F)$-representations $\mathcal H^{\infty_\A}_i$ will appear more than once in $\mathcal{A}^\infty_{\J,res}(G)$ (whence in each $\mathcal A^\infty_{res,\varphi_P}(G)$), i.e., already the equivalence class of each $G(\A_F)$-representation $\mathcal H^{\infty_\A}_i$ would determine a unique cuspidal support $\varphi_P=\varphi_P([\mathcal H^{\infty_\A}_i])$. See also \cite{grob_book}, Lem.\ 15.10. (We do not assume AMF, however.)\\\\
In summary, we obtain algebraic direct sum decompositions as $G(\A_f)$-modules
$$H^q(\g,K_G, \mathcal{A}^\infty_{\J,res}(G)\otimes \EE_\mu) \cong \bigoplus_{\{P\}\neq \{G\}} \bigoplus_{\varphi_P}H^q(\g,K_G, \mathcal A^\infty_{res,\varphi_P}(G)\otimes \EE_\mu) \cong  \bigoplus_{\{P\}\neq \{G\}} \bigoplus_{\varphi_P}\bigoplus_{i\in I_{\varphi_P}} H^q(\g,K_G, \mathcal H^{\infty_\A}_i\otimes \EE_\mu),$$
which allows one to write $r^q$ as a sum of restrictions $r^q=\oplus_{\{P\}\neq \{G\}, \varphi_P}r^q_{\varphi_P}=\oplus_{\{P\}\neq \{G\},\varphi_P, i\in I_{\varphi_P}} r^q_i$, where for every $\varphi_P$ and $i\in I_{\varphi_P}$ the respective $G(\A_f)$-morphisms are defined / fit into the following commuting diagram
$$\xymatrix{
H^{q}(\g, K_G, \mathcal A^\infty_{\J,res}(G)\otimes\EE_\mu) \ar[rrr]^{r^q}  & & &
H^q(\g,K_G, \mathcal A^\infty_{\mathcal J}(G)\otimes\EE_\mu)\\ &&&\\
H^{q}(\g, K_G, \mathcal A^\infty_{res,\varphi_P}(G)\otimes\EE_\mu) \ar@{^{(}->}[uu]^{}\ar[rrr]^{r^q_{\varphi_P}}  & & &
H^q(\g,K_G, \mathcal A^\infty_{\mathcal J,\{P\},\varphi_P}(G)\otimes\EE_\mu)\ar@{^{(}->}[uu]^{}\\ &&&\\
H^q(\g,K_G, \mathcal H^{\infty_\A}_i\otimes \EE_\mu)\ar@{^{(}->}[uu]^{}
\ar@{->}[uurrr]^{r^q_i}
 & & &
}$$
As we just observed above, the targeted cuspidal datum $\varphi_P$ is furthermore uniquely determined by the isomorphism class $[\mathcal H^{\infty_\A}_i]$ of $\mathcal H^{\infty_\A}_i$, if AMF holds.

\subsection{The main result on residual Eisenstein cohomology of $G_n(\A_F)$}
The maximal parabolic $F$-subgroups $P_k$, $1\leq k\leq r$, of $G$ and their associate classes of cohomological cuspidal automorphic representations $\varphi_{P_k}$ obviously constitute direct admissible $G(\A_f)$-submodules $H^q(\g,K_G, \mathcal A^\infty_{\mathcal J,\{P_k\},\varphi_{P_k}}(G)\otimes\EE_\mu)$ of automorphic cohomology as well as direct $G(\A_F)$-summands $\mathcal A^\infty_{res,\varphi_{P_k}}(G)$ of \eqref{eq:nonp}.\\\\ The following is our main theorem, which treats the existence of families of non-trivial residual automorphic cohomology classes in the images of $r^q_{\varphi_{P_k}}$. For sake of readability we recall the various assumptions made so far in one place:

\begin{thm}\label{thm:main}
Let $E/F$ be a quadratic extension of arbitrary number fields and let $G=G_n$ be the unitary group over $F$, attached to a non-dengenerate hermitian form on an $n$-dimensional $E$-vector space of $F$-rank $r\geq 1$. Let $P_k$, $1\leq k\leq r$, be a standard maximal parabolic $F$-subgroup of $G$ and let $\varphi_{P_k} $ be an associate class of cuspidal automorphic representations of $L_k(\A_F)\cong \GL_k(\A_E)\times G_{n-2k}(\A_F)$, cf.\ Sect.\ \ref{sect:pc}, which is represented by a cuspidal automorphic representation $\pi_{\lambda_0}=\pi\otimes e^{\langle \lambda_0, H_P(\cdot)\rangle}$. We write, as we may, $\pi=\tau\hat\otimes\sigma$ for a unitary cuspidal automorphic representation $\tau$ of $\GL_k(\A_E)$ and a unitary cohomological cuspidal automorphic representation $\sigma$ of $G_{n-2k}(\A_F)$ and we let  $w\in W^{P_k}$ be the corresponding unique Kostant representative such that $\pi_\infty$ is $(\m_{k,\infty},K_{M^\circ_{k,\infty}})$-cohomological with respect to $\EE_{\mu_w}|_{M^\circ_{k,\infty}}$ and such that $\lambda_0=s_w\cdot \gamma_k$ with $s_w$ being given by \eqref{sw}. Furthermore, we abbreviate
$$q(\pi_{\lambda_0}):=q_{min}(\pi_\infty) + [F:\Q] k(2n-3k) -\ell(w),$$
where $q_{min}(\pi_\infty)$ is given by Prop.\ \ref{prop:qminLk}.
We assume that $\Pi=BC(\sigma)$ satisfies the equivalent conditions of Thm.\ \ref{thm:qtcusp}, i.e., $\Pi=\Pi_1\boxplus...\boxplus\Pi_l$ for cuspidal automorphic representations $\Pi_i$, $1\leq i\leq l$ and that either
\begin{enumerate}
\item $s_w=\tfrac12$ and $\tau$ is conjugate self-dual, $\epsilon$-distinguished and $L(\tfrac12,\tau\times \Pi^\vee)\neq 0$; or
\item $s_w=1$ and $\tau=\Pi_i$ for some $1\leq i\leq l$.
\end{enumerate}
Then, $r^{q(\pi_{\lambda_0})}_{\varphi_{P_k}}$ is injective and its image, which consists of residual Eisenstein cohomology classes, is non-zero.
\end{thm}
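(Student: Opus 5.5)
The plan is to follow the by now standard strategy of \cite{grob22, grobner-EisRes, rosp}: identify the residual representations in $\mathcal A^\infty_{res,\varphi_{P_k}}(G)$ as the images of residues of Eisenstein series, compute their $(\g,K_G)$-cohomology in the relevant degree, and then show that the resulting classes survive in $H^q(\g,K_G,\mathcal A^\infty_{\mathcal J}(G)\otimes\EE_\mu)$ by testing their restriction to the Borel--Serre boundary stratum of $P_k$, i.e., by taking constant terms along $P_k$.

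\emph{Step 1 (the residual space).} By Thm.\ \ref{thm:Poles}, under (1) or (2) the Eisenstein series $E_{P_k}(f,s\gamma_k)$ has a simple pole at $s_0=s_w$. Since $\{P_k\}=\{P_k\}$ is a singleton and the pole is simple, $\mathcal A^\infty_{res,\varphi_{P_k}}(G)$ is spanned by the residues ${\rm Res}_{s=s_0}E_{P_k}(f,s\gamma_k)$, $f\in I^G_{P_k}(\pi)$. Its constant term along $P_k$ is ${\rm Res}_{s=s_0}M(s,\pi)f\cdot e^{\langle -s_0\gamma_k+\rho_{P_k},H_{P_k}\rangle}$. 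Hence the residue map identifies $\mathcal A^\infty_{res,\varphi_{P_k}}(G)$ with the image of the residual intertwining operator ${\rm Res}_{s_0}M(s,\pi)$, a quotient of ${\rm Ind}_{P_k(\A_F)}^{G(\A_F)}[\pi\otimes e^{\langle s_0\gamma_k,H_{P_k}\rangle}]$. By Prop.\ \ref{prop:Msp}, the normalized operators $N(s,\pi_\vv)$ are holomorphic and nonzero at $s_0$. So this image is, up to the scalar residue of $r(s,\pi)$, the restricted tensor product of the local images $N(s_0,\pi_\vv)$. At archimedean places these are the Langlands quotients $J(P_k(F_\vv),\pi_\vv,s_0\gamma_k)$.

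\emph{Step 2 (archimedean cohomology).} Compute $H^q(\g,K_G,J_\infty\otimes\EE_\mu)$, where $J_\infty$ is the archimedean Langlands quotient, and show it is non-zero in degree $q(\pi_{\lambda_0})$. I would do this via the long exact sequence attached to $0\to\ker\to{\rm Ind}\to J_\infty\to 0$. Combine it with the standard computation of the cohomology of the induced module,
$$H^q(\g,K_G,{\rm Ind}_{P_k}^{G}[\pi_{\infty}\otimes e^{\langle \lambda_0+\rho_{P_k},H\rangle}]\otimes\EE_\mu)\cong\bigoplus H^{q-\ell(w)}(\m_k,K_{M_k},\pi_\infty\otimes\EE_{\mu_w})\otimes\wedge^\bullet\check\a,$$
which follows from Frobenius reciprocity and Kostant's theorem, see \cite{bowa} III. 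In the same way, the kernel is a (quotient of) the induced module from $w_k(\pi)$ at $-\lambda_0$, which corresponds to the Kostant representative $w'=w_kw$-type, of length $\dim\n_{k,\infty}-\ell(w)$. Note that $\dim_\R\n_{k,\infty}/\text{(appropriate)}$ produces the term $[F:\Q]k(2n-3k)$. The claim is that the lowest degree contributed by the ``$-\lambda_0$'' side, namely $q_{min}(\pi_\infty)+[F:\Q]k(2n-3k)-\ell(w)$, survives in $J_\infty$. Equivalently, by Poincar\'e duality/the Borel--Wallach description, this is the degree in which the constant term along $P_k$ of the residual form lands.

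\emph{Step 3 (non-vanishing in automorphic cohomology; the main obstacle).} Using Franke's filtration or, more concretely, the Schwermer--Franke description of $H^*(\g,K_G,\mathcal A^\infty_{\mathcal J,\{P_k\},\varphi_{P_k}}\otimes\EE_\mu)$, it suffices to show the composite of $r^q_{\varphi_{P_k}}$ with the constant-term map to $H^q(\g,K_G,{\rm Ind}[w_k(\pi)\otimes e^{-s_0\gamma_k}]\otimes\EE_\mu)$ is injective in degree $q=q(\pi_{\lambda_0})$. The constant term of the residue is exactly ${\rm Res}\,M(s,\pi)$, which by Step 1 is an injective map from $J$ into this induced representation. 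In cohomology, the target in degree $q$ is $H^{q_{min}}(\m_k,K,\pi_\infty\otimes\EE_{\mu_{w'}})$, placed in the bottom degree $\ell(w')$ of the induced module. I expect the delicate point to be showing that the induced map $H^q(J\otimes\EE_\mu)\to H^q({\rm Ind}(-\lambda_0)\otimes\EE_\mu)$ is nonzero, in fact injective. My plan is to argue that in this lowest degree the long exact sequence forces $H^q(\ker)\to H^q({\rm Ind}(\lambda_0))$ to miss the relevant piece, because the $\lambda_0$-side has no cohomology below degree $q_{min}+\ell(w)+\dots$. One can avoid this for $q<$ the degrees of the kernel by a degree count using temperedness of $\pi_\infty$ (Lem.\ \ref{lem:varphis}). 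Finally, one must show no other summand of \eqref{eq:cohdec} (with the same $\varphi_{P_k}$, e.g.\ from derivatives $\partial$ of Eisenstein series) kills the class. This holds because, the pole being simple, the full space $\mathcal A^\infty_{\mathcal J,\{P_k\},\varphi_{P_k}}$ has a two-step filtration with residual bottom piece and top piece given by the induced module at $\lambda_0$, whose cohomology starts strictly above $q(\pi_{\lambda_0})$. This yields injectivity of $r^{q(\pi_{\lambda_0})}_{\varphi_{P_k}}$, and non-vanishing of the image follows from Step 2.
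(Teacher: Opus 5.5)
Your outline has the same architecture as the paper's proof: residues span $\mathcal A^\infty_{res,\varphi_{P_k}}(G)$, the archimedean component of each residual summand is the Langlands quotient $J(\pi_\infty,s_w)$, and injectivity comes from Franke's filtration or from the constant term along $P_k$. However, the decisive archimedean input is missing. Write ${\rm Ind}(\pm\lambda_0)$ for ${\rm Ind}_{P_{k,\infty}}^{G_\infty}[\pi_\infty\otimes e^{\langle \pm s_w\gamma_k,H_{P_k}(\cdot)\rangle}]$. Step 2 only \emph{asserts} that $H^{q(\pi_{\lambda_0})}(\g,K_G,J(\pi_\infty,s_w)\otimes\EE_\mu)\neq 0$, and the mechanism you sketch cannot prove it.

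First, describing the kernel $K$ of ${\rm Ind}(\lambda_0)\twoheadrightarrow J(\pi_\infty,s_w)$ as a quotient of ${\rm Ind}(-\lambda_0)$ is unjustified. A priori $K$ only shares its composition factors with the cokernel $C$ of $J(\pi_\infty,s_w)\hookrightarrow{\rm Ind}(-\lambda_0)$. Second, since ${\rm Ind}(\lambda_0)$ has no cohomology below degree $q_{min}(\pi_\infty)+\ell(w)$, the long exact sequence merely gives $H^{q}(J)\cong H^{q+1}(K)$ for $q\le q_{min}(\pi_\infty)+\ell(w)-2$. That only shifts the problem to the unknown cohomology of $K$, and temperedness of $\pi_\infty$ says nothing about the other constituents of the standard module.

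The same defect affects your constant-term route in Step 3. The relevant map is induced by $J(\pi_\infty,s_w)\hookrightarrow{\rm Ind}(-\lambda_0)$. Its injectivity in degree $q$ amounts to the vanishing of the connecting map $H^{q-1}(\g,K_G,C\otimes\EE_\mu)\to H^{q}(\g,K_G,J(\pi_\infty,s_w)\otimes\EE_\mu)$, again a statement about the constituents of $C$. Your argument instead looks at the sequence for $K\hookrightarrow{\rm Ind}(\lambda_0)$, which is irrelevant here.

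This missing input is exactly what the paper imports. It uses \cite{CKT}, Thm.\ 9.2 (extending \cite{rosp}), together with \cite{bowa}, III.3.3 and V.1.4--V.1.5. These give $q_{min}(\mathcal H_\infty)=q_{min}(\pi_\infty)+\dim_\R N_{k,\infty}-\ell(w)=q(\pi_{\lambda_0})$ and the injectivity of $c_\infty^{q(\pi_{\lambda_0})}$. This comes after one knows, via unitarity, the infinitesimal character and \cite{salamribsu}, that $\mathcal H_\infty$ is cohomological at all. Alternatively, you could identify $J(\pi_\infty,s_w)$ with some $A_{\mathfrak q}(\lambda)$ and read off its bottom degree from \cite{vozu}. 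Without such an input, the ``non-zero image'' half of the theorem is unproved.

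Your Franke-filtration argument for injectivity is the one the paper uses via \cite{grobner-EisRes}, Cor.\ 17, but its key claim is misstated. The top graded piece, induced from $\pi\otimes e^{\langle\lambda_0,H_{P_k}(\cdot)\rangle}\otimes S(\check\a_{P_k,\C})$, has cohomology beginning in degree $q_{min}(\pi_\infty)+\ell(w)$. This lies strictly above $q(\pi_{\lambda_0})=q_{min}(\pi_\infty)+\dim_\R N_{k,\infty}-\ell(w)$ only if $\ell(w)>\tfrac12\dim_\R N_{k,\infty}$. In the case $\ell(w)=\sum_{\vv\in S_\infty}\lfloor\tfrac12\dim_\R N_k(F_\vv)\rfloor$ the two degrees coincide, so as written your argument does not cover it. The paper singles out exactly this case and treats it separately, by the constant term along $P_k$ and \cite{CKT}, Thm.\ 9.2.

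To repair your argument, use that the connecting map vanishes as soon as the top piece has no cohomology in degree $q(\pi_{\lambda_0})-1$. This rests on the inequality $\ell(w)\geq\sum_{\vv\in S_\infty}\lceil\tfrac12\dim_\R N_k(F_\vv)\rceil$ of \cite{grobner-EisRes}, Prop.\ 12, which you never state.
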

\begin{proof}
Let $\pi_{\lambda_0}=\pi\otimes e^{\langle \lambda_0, H_P(\cdot)\rangle}\in \varphi_{P_k}$ be as in the statement of the theorem. Then, there exist smooth $K_\infty$-finite sections $f\in I^G_{P_k}(\pi)$, such that the Eisenstein series $E_{P_k}(f,\lambda)$ have a simple pole at $\lambda_0=s_w\cdot \gamma_k$ by Thm.\ \ref{thm:Poles}. As $s_w>0$, it follows from Langlands's square-integrability criterion, \cite{moewal}, Lem.\ I.4.11 or \cite{langlandsLNM}, Cor.\ on p.\ 104, that the automorphic subrepresentations generated by the attached residues $\lim_{s\rightarrow s_w^+}(s-s_w)E_{P_k}(f,s\cdot\gamma_k)$ are indeed square-integrable and in fact their topological closure in $\mathcal A^\infty_\J(G)$ spans the non-zero space $\mathcal A^\infty_{res,\varphi_{P_k}}(G) = {\rm Cl}_{\mathcal A^\infty_{\mathcal J}(G)}({\rm Eis}_{\pi, s_w\cdot\gamma_k}(I^G_{P_k}(\pi)_{(K_\infty)}))$, see \eqref{eq:Eismap} and \eqref{eq:defcuspsup}. (Here, we used the identification $I^G_{P_k}(\pi)_{(K_\infty)}\hookrightarrow I^G_{P_k}(\pi)_{(K_\infty)}\otimes {S}(\check\a_{P_k,\C})$, $f\mapsto f\otimes 1$.)\\\\ 
Let now $\mathcal H^{\infty_\A}$ be an irreducible summand of $\mathcal A^\infty_{res,\varphi_{P_k}}(G)$, cf.\ \eqref{eq:irrdecA}, which we recall is derived from an irreducible summand $\mathcal H$ of $L^2_{res}(G(F)\backslash G(\A_F))$. The induced representation ${\rm Ind}_{P_{k,\infty}}^{G_\infty}[\pi_\infty\otimes e^{\langle s_w \gamma_k, H_{P_\infty}(\cdot)\rangle}]$ is in Langlands position, as $\pi_\infty$ is tempered by Lem.\ \ref{lem:varphis} and Thm.\ \ref{thm:qtcusp}, and $s_w\gamma_k\in \check\a_{P_k}^{+}$, as $s_w>0$, and therefore has a unique irreducible quotient $J(\pi_\infty,s_w)$. Via the constant term map along $P_k$ it is infinitesimally equivalent to the unitary representation $\mathcal H_\infty$, the archimedean component of $\mathcal H$. Consequently, the infinitesimal character of $\mathcal H_\infty$ is the one of ${\rm Ind}_{P_{k,\infty}}^{G_\infty}[\pi_\infty\otimes e^{\langle s_w \gamma_k, H_{P_\infty}(\cdot)\rangle}]$, whence equal to the one of $\EE^\vee_\mu$. It hence follows now along the same lines, as carried out in details in the proof of Lem.\ \ref{lem:varphis},  -- that is, by applying \cite{salamribsu}, Thm.\ 1.8, to the restriction of the irreducible unitary summands of $\mathcal H_\infty|_{G_\infty^\circ}$ to the connected semisimple derived group $\mathcal D(G^\circ_\infty)\cong SU(V_n)_\infty$ -- that $\mathcal H_\infty$ is cohomological with respect to $\EE_\mu$. Knowing that $\mathcal H_\infty$ is infinitesimally equivalent to $J(\pi_\infty,s_w)$, its minimal degree of non-vanishing $(\g,K_G)$-cohomomology may be computed by several general techniques, e.g., applying \cite{vozu}, Thm.\ 6.16 and Thm.\ 5.5, {\it ibidem}, or, which seems most convenient for our purposes, using \cite{CKT}, Thm.\ 9.2 and \cite{bowa}, Thm.\ III.3.3 and V.1.4 -- V.1.5, {\it ibidem}, to finally obtain 
\begin{equation}\label{eq:CKTinj}
q_{min}(\mathcal H_\infty)=q_{min}(\pi_\infty) + (\dim_{\R}(N_{k,\infty})-\ell(w))=q_{min}(\pi_\infty) + [F:\Q] k(2n-3k) -\ell(w)=q(\pi_{\lambda_0}).
\end{equation}
Therefore, $H^{q(\pi_{\lambda_0})}(\g,K_G, \mathcal H^{\infty_\A}\otimes \EE_\mu)$ is non-zero for each irreducible $G(\A_F)$-summand $ \mathcal H^{\infty_\A}$ of $\mathcal A^\infty_{res,\varphi_{P_k}}(G)$ and so, under the assumptions of the theorem,
$$H^{q(\pi_{\lambda_0})}(\g,K_G, \mathcal A^\infty_{res,\varphi_{P_k}}(G)\otimes\EE_\mu)\neq \{0\}.$$ 
We are hence left to show that $r^{q(\pi_{\lambda_0})}_{\varphi_{P_k}}$ is injective to complete the proof.\\\\ 
Since $r^{q(\pi_{\lambda_0})}_{\varphi_{P_k}}$ is a $G(\A_f)$-homomorphism, it will be shown to be injective, once it is known to be injective on the isotypic components of the semisimple $G(\A_f)$-module $H^{q(\pi_{\lambda_0})}(\g,K_G, \mathcal A^\infty_{res,\varphi_{P_k}}(G)\otimes\EE_\mu)$. However, if $\ell(w)>\sum_{\vv\in S_\infty}\lfloor\tfrac12 \dim_{\R}(N_{k}(F_\vv))\rfloor$, the latter injectivity on the $G(\A_f)$-sotypic components is, by \eqref{eq:CKTinj}, a direct consequence of \cite{grobner-EisRes}, Cor.\ 17, which was proved using Franke's filtration. We remark that the condition $\ell(w)>\sum_{\vv\in S_\infty}\lfloor\tfrac12 \dim_{\R}(N_{k}(F_\vv))\rfloor$ holds in the majority of cases of $w\in W^{P_k}$ -- see Table \ref{tab:kostant-representatives-evaluation-points} for concrete examples -- as one has $\ell(w)\geq \sum_{\vv\in S_\infty}\lceil\tfrac12 \dim_{\R}(N_{k}(F_\vv))\rceil$ in general, cf.\ \cite{grobner-EisRes}, Prop.\ 12.\\\\ 
In order to cover also the remaining case, when $\ell(w)=\sum_{\vv\in S_\infty}\lfloor\tfrac12 \dim_{\R}(N_{k}(F_\vv))\rfloor $ (which, as we also remark, may indeed happen for both cases, i.e., when $s_w=\tfrac12$ and when $s_w=1$), we adopt the ideas of the proof of Thm.\ 2.1 in \cite{grob22}, in order to show the injectivity of  $r^{q(\pi_{\lambda_0})}_{\varphi_{P_k}}$: Fix an irreducible summand $\mathcal H^{\infty_\A}$ occurring in $\mathcal A^\infty_{res,\varphi_{P_k}}(G)$ with (necessarily finite) multiplicity $m(\mathcal H)$ and let $\mathcal V_{\mathcal H^{\infty_\A}}$ denote the corresponding $\mathcal H^{\infty_\A}$-isotypic component in $\mathcal A^\infty_{res,\varphi_{P_k}}(G)$. We first describe the constant term along $P_k$ on $\mathcal V_{\mathcal H^{\infty_\A}}$. Recall from Sect.\ \ref{sect:intops} (our choice of a representative of) the unique non-trivial element $w_k$ of $W_k$, which conjugates $P_k$ to its opposite parabolic. Since the residual pole at \(\lambda_0\) occurs only for a conjugate self-dual datum, the assumptions of the theorem imply that ${w_k}(\pi)\cong\pi$ and hence ${\rm Ind}_{P_k(\A_F)}^{G(\A_F)}[\pi\otimes e^{\langle -s_w\gamma_k,H_{P_k}(\cdot)\rangle}]$ is nothing but the anti-standard induced module of ${\rm Ind}_{P_k(\A_F)}^{G(\A_F)}[\pi\otimes e^{\langle s_w\gamma_k,H_{P_k}(\cdot)\rangle}]$. We observe that the constant term along $P_k$ takes
values in ${\rm Ind}_{P_k(\A_F)}^{G(\A_F)}[\pi^{m(\pi)}\otimes e^{\langle -s_w\gamma_k,H_{P_k}(\cdot)\rangle}]$: Indeed, if
$R(f):=\lim_{s\rightarrow s_w^+}(s-s_w)E_{P_k}(f,s\gamma_k)$ is one of the residues generating $\mathcal A^\infty_{res,\varphi_{P_k}}(G)$, then the constant-term
formula along $P_k$, cf.\ \cite{moewal}, Prop.\ II.1.7, gives, after exchanging integration over $N_k(F)\backslash N_k(\A_F)$ with taking the limit $\lim_{s\rightarrow s_w^+}$, 
\begin{equation}\label{eq:constant-term-residue}
 (R(f))_{P_k}
 =
 {\rm Res}_{s=s_w}\,M(s,\pi)(f\cdot e^{\<s\gamma_k+\rho_{P_k},H_{P_k}(\cdot)\>}),
\end{equation}
as the summand $f\cdot e^{\<s\gamma_k+\rho_{P_k},H_{P_k}(\cdot)\>}$ attached to the identity-intertwiner of the constant term contributes trivially, since it
is holomorphic at $s=s_w$. Consequently, computing the constant term along $P_k$ induces a $G(\A_F)$-equivariant linear map
$$
 c_{\mathcal H^{\infty_\A}}:\mathcal V_{\mathcal H^{\infty_\A}}
 \longrightarrow
 {\rm Ind}_{P_k(\A_F)}^{G(\A_F)}[\pi^{m(\pi)}\otimes e^{\langle -s_w\gamma_k,H_{P_k}(\cdot)\rangle}].
$$
This map is injective. In fact, if
$\phi\in\mathcal V_{\mathcal H^{\infty_\A}}$ satisfies $c_{\mathcal H^{\infty_\A}}(\phi)=0$, then the preceding observation implies that its entire constant term $\phi_{P_k}$ vanishes. Since $P_k$ is self-associate, the vanishing of $\phi_{P_k}$ therefore implies that $\phi_{P_j}=0$ for all maximal parabolic subgroups, $1\leq j\leq r$, see \cite{moewal}, Prop.\ II.1.7 again. Hence, $\phi$ is also cuspidal, implying $\phi=0$.\\\\ 
By \cite{grob_book}, Thm.\ 12.16 (see also \cite{grob_zun}, Thm.\ 3.15) and using the identification $\mathcal H^{\infty_\A}_\infty\cong J(\pi_\infty,s_w)$ obtained above, the full smooth isotypic component admits a realization $\mathcal V_{\mathcal H^{\infty_\A}}\cong J(\pi_\infty,s_w)\,\overline\otimes_{\rm in}\,\mathcal (H^{\infty_f}_f)^{m(\mathcal H)}$.
Let
$$c_\infty:J(\pi_\infty,s_w)\hookrightarrow {\rm Ind}_{P_{k,\infty}}^{G_\infty}[\pi_\infty\otimes e^{\langle -s_w \gamma_k, H_{P_\infty}(\cdot)\rangle}]$$ 
denote the canonical embedding of $G_\infty$-representations, the $G(\A_F)$-equivariance of
$c_{\mathcal H^{\infty_\A}}$ therefore implies $$
 c_{\mathcal H^{\infty_\A}}
 =c_\infty\overline\otimes_{\rm in}c_f
$$
for a uniquely determined $G(\A_f)$-homomorphism
$$
 c_f:
 (\mathcal H^{\infty_f}_f)^{m(\mathcal H)} \longrightarrow
  {\rm Ind}_{P_k(\A_f)}^{G(\A_f)}[\pi_f^{m(\pi)}\otimes e^{\langle -s_w\gamma_k,H_{P_{k,f}}(\cdot)\rangle}].
 $$
Since $c_{\mathcal H^{\infty_\A}}$ and $c_\infty$ are injective, $c_f$ is injective as well.\\\\
We are finally ready to look at the map induced from $c_{\mathcal H^{\infty_\A}}$ in $(\g,K_G)$-cohomology in degree $q(\pi_{\lambda_0})$,
\begin{equation*}
c_{\mathcal H^{\infty_\A}}^{q(\pi_{\lambda_0})}: H^{q(\pi_{\lambda_0})}(\g,K_G, \mathcal V_{\mathcal H^{\infty_\A}}\otimes \EE_\mu) \longrightarrow H^{q(\pi_{\lambda_0})}(\g,K_G, {\rm Ind}_{P_k(\A_F)}^{G(\A_F)}[\pi^{m(\pi)}\otimes e^{\langle -s_w\gamma_k,H_{P_k}(\cdot)\rangle}]\otimes\EE_\mu).
\end{equation*}
However, this map is just the one, which one obtains from composing the restriction of $r^{q(\pi_{\lambda_0})}_{\varphi_{P_k}}$ to the $G(\A_f)$-isotypic summand $H^{q(\pi_{\lambda_0})}(\g,K_G, \mathcal V_{\mathcal H^{\infty_\A}}\otimes \EE_\mu)$ of $H^{q(\pi_{\lambda_0})}(\g,K_G, \mathcal A^\infty_{res,\varphi_{P_k}}(G)\otimes\EE_\mu)$ with the map in $(\g,K_G)$-cohomology, induced from computing the constant term on $\mathcal A^\infty_{\mathcal J,\{P_k\},\varphi_{P_k}}(G)$ along $P_k$ and projecting onto its $w_k$-summand. Hence, in order to show that $r^{q(\pi_{\lambda_0})}_{\varphi_{P_k}}$ is injective, it is enough to show that $c_{\mathcal H^{\infty_\A}}^{q(\pi_{\lambda_0})}$ is. Clearly, since $c_{\mathcal H^{\infty_\A}}^{q(\pi_{\lambda_0})}=c_\infty^{q(\pi_{\lambda_0})}\otimes c_f$, where
$$c_\infty^{q(\pi_{\lambda_0})}: H^{q(\pi_{\lambda_0})}(\g,K_G, J(\pi_\infty,s_w)\otimes \EE_\mu) \longrightarrow H^{q(\pi_{\lambda_0})}(\g,K_G, {\rm Ind}_{P_{k,\infty}}^{G_\infty}[\pi_\infty\otimes e^{\langle -s_w\gamma_k, H_{P_{k,\infty}}(\cdot)\rangle}]\otimes\EE_\mu)$$
is the map in $(\g,K_G)$-cohomology, induced from $c_\infty$, and since we have just seen that $c_f$ is injective, the global map $c_{\mathcal H^{\infty_\A}}^{q(\pi_{\lambda_0})}$ is injective, if and only if $c_\infty^{q(\pi_{\lambda_0})}$. However, the desired injectivity of $c_\infty^{q(\pi_{\lambda_0})}$ -- a purely archimedean assertion -- was achieved in the generality needed here in the aforementioned \cite{CKT}, Thm.\ 9.2, paraphrasing and extending results of \cite{rosp}. This completes the proof of the theorem.

\end{proof}

\begin{rem}[Non-zero restriction to boundary strata]
Variants of the part in the above argument for the injectivity of $r^{q(\pi_{\lambda_0})}_{\varphi_{P_k}}$, when $\ell(w)=\sum_{\vv\in S_\infty}\lfloor\tfrac12 \dim_{\R}(N_{k}(F_\vv))\rfloor $, taken from \cite{grob22}, may also be found in the proof of \cite{rosp}, Thm.\ III.1 and of \cite{nairrai}, Lem.\ B.1. As a byproduct, it shows that the restriction of the non-zero residual Eisenstein cohomology classes in the image of $r^{q(\pi_{\lambda_0})}_{\varphi_{P_k}}$ to the boundary stratum $\partial_{P_k}(\overline X_G)$ in the Borel-Serre compactification of $X_G:=G_n(F)\backslash G_n(\A_F)/K_G$ attached to $P_k$ (\cite{boserre}; \cite{rohlfs}, \S 6.4) are non-zero as well. Consequently, the image of $r^{q(\pi_{\lambda_0})}_{\varphi_{P_k}}$ has trivial intersection with interior cohomology, i.e., none of the non-zero residual Eisenstein cohomology classes constructed in Thm.\ \ref{thm:main} is compactly supported. (Consequently, none of them can be a {\it ghost class}.) See also \cite{grobner-EisRes}, \S8, for complementary results.
\end{rem}

\begin{rem}[Complementary vanishing results]
In the special case when $E/\Q$ is an imaginary quadratic number field, $\mathcal D(G(\R))\cong SU(p,q)$, $k=1$ and $s_w=\tfrac12$, the injectivity of $r^{q(\pi_{\lambda_0})}_{\varphi_{P_k}}=r^{pq-1}_{\varphi_{P_1}}$ was also shown recently in \cite{mundy}. His main result, however, also implies the {\it vanishing} of the map $r^{pq+1}_{\varphi_{P_1}}$ (hence slightly sharpening Thm.\ 4.3.B of \cite{grabschwun}: Their ``{\it quotient in degree $2n-r+1$}'' is indeed trivial), a complementary question which we left completely untouched here, as its underlying techniques would go beyond the scope of this paper, but which we plan to take on in joint work with Mundy in the case of semisimple groups over totally real fields. 
\end{rem}

\subsection{A non-trivial example - Part II: Concrete non-zero residual Eisenstein classes}

We resume the discussion of our extended example $G=U(V_6,h)/\Q(\sqrt[3]{2})$ from Sect.\ \ref{sect:extex} in the light of Thm.\ \ref{thm:main}. Our aim is to give concrete examples of non-zero residual Eisenstein cohomolgy classes for $G(\A_F)$, by making all ingredients of our general theorem Thm.\ \ref{thm:main} explicit (and showing their existence). Firstly, we note that
$$
\dim_\mathbb R(G_\infty/K_G)
=
\dim_\mathbb R U(3,3)/ (U(3)\times U(3))
+
\dim_\mathbb R \GL_6(\mathbb C)/U(6)
=
2\cdot 3\cdot 3+6^2
=
54.
$$
and we summarize the calculated values of several input-data in the following table:
\begin{table}[ht]
\centering
\small
\renewcommand{\arraystretch}{1.2}
\begin{tabular}{c|c|c|c|c|c|c}
\hline
\(P_k\)
&
\(s_w\)
&
\(\begin{array}{c} q_{min}(\pi_\infty) \end{array}\)
&
\(\begin{array}{c} q(\pi_{\lambda_0}) \end{array}\)
&
\(\begin{array}{c} \dim_\mathbb R X_{L_k} \end{array}\)
&
\(\begin{array}{c} \dim_\mathbb R N_{k,\infty} \end{array}\)
&
\(\begin{array}{c} \dim_\mathbb R \partial_{P_k}(\overline X_G) \end{array}\)
\\
\hline
\(P_1\) & \(\frac12\) & \(10\) & \(21,\ 22\)             & \(26\) & \(27\) & \(53\)\\
\(P_2\) & \(\frac12\) & \(5\)  & \(17,\ 18,\ 19,\ 20\)   & \(17\) & \(36\) & \(53\)\\
\(P_2\) & \(1\)       & \(5\)  & \(14,\ 15,\ 16,\ 17\)   & \(17\) & \(36\) & \(53\)\\
\(P_3\) & \(\frac12\) & \(9\)  & \(18\)                  & \(26\) & \(27\) & \(53\)\\
\hline
\end{tabular}
\caption{Cohomological degrees and dimensions of related Borel--Serre boundary data}
\label{tab:example-degrees-boundary}
\end{table}

The main result of this subsection is the following theorem, which provides actual examples of non-zero resiudal Eisenstein cohomology classes for our role model case of the unitary group $G=U(V_6,h)/\Q(\sqrt[3]{2})$ from Sect.\ \ref{sect:extex}:

\begin{thm}\label{prop:nonempty-P2-example}
Let $w\in W^{P_2}$ be the Kostant representative $w=
        \bigl(
        w_{\vv_\R},w_{\vv_\C}
        \bigr)
        =
        \bigl(
        w_0,(w_0,w_0)
        \bigr)$, with $w_0=(5,3,1,6,4,2)$
occurring in the column of \(s_w=1\) in the second table of Sect.\ \ref{sect:tablesw}. Then there
exists a cuspidal automorphic representation $\pi=\tau\widehat\otimes\sigma$ of $L_2(\mathbb A_F)
        \cong
        \GL_2(\mathbb A_E)\times U(V_2,h)(\mathbb A_F)$
satisfying the hypotheses of Theorem~\ref{thm:main} at $\lambda_0=\gamma_2.$
Consequently, degree $ q(\pi_{\lambda_0})
        = 14$ in the third row of Table \ref{tab:example-degrees-boundary} carries non-zero residual Eisenstein cohomology classes. None of them lies in inner cohomology.
\end{thm}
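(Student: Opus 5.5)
The plan is to construct $\tau$ and $\sigma$ with $\tau\cong BC(\sigma)$ and then to invoke Theorem~\ref{thm:main} in case (2). Here $n-2k=2$. The theorem requires $\tau=\Pi_i$ for some isobaric summand $\Pi_i$ of $\Pi=BC(\sigma)$, and $\tau$ is cuspidal on $\GL_2(\A_E)$. So we aim for $\Pi=BC(\sigma)$ to be itself a \emph{cuspidal} representation of $\GL_2(\A_E)$, with $l=1$, and we take $\tau:=\Pi$. Then the conditions of Thm.~\ref{thm:qtcusp} hold trivially. By Thm.~\ref{thm:Poles}(2), $E_{P_2}(f,\lambda)$ has a simple pole at $\lambda_0=\gamma_2$. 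By Thm.~\ref{thm:qtcusp}, $\Pi$ is conjugate self-dual and $L^T(s,\Pi,{\rm As}^{-})$ has a pole at $s=1$, since $(-1)^{n-1}=-1$ for $n=2$.

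We first pin down the archimedean data forced by $w=(w_0,(w_0,w_0))$ with $\mu=0$. The weight $\mu_w=w\cdot 0$ restricted to the $\GL_2$- and $U(V_2)$-blocks at $\vv_\R$, $\iota_{\vv_\C}$ and $\bar\iota_{\vv_\C}$ fixes several infinitesimal characters:
\begin{itemize}
\item at the real (inert) place, the discrete series $\sigma_{\vv_\R}$ of $U(1,1)$;
\item at the complex place, the tempered cohomological representation $\sigma_{\vv_\C}$ of $\GL_2(\C)$;
\item the parameters $a_1,a_2$ (in the notation of \S\ref{sect:tempGLcoh}) of $\tau_\ww=z^{a_1}\bar z^{-a_1}\times z^{a_2}\bar z^{-a_2}$ at the three complex places $\ww$ of $E$.
\end{itemize}
One checks that condition \eqref{eq:complselfcont} holds and that the $\GL_2$-parameters agree with the local base change of $\sigma_\infty$, as they must if $\tau=BC(\sigma)$. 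This is the consistency check that makes $w_0$ appear in the $s_w=1$ column.

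For the degree, Prop.~\ref{prop:qminLk} gives
$$q_{min}(\pi_\infty)=(3-2)(3-2)+1+\tfrac{36-48-6+24}{2}=5 .$$
Moreover $[F:\Q]k(2n-3k)=3\cdot 2\cdot 6=36$ and $\ell(w)=3\ell(w_0)=27$. Hence $q(\pi_{\lambda_0})=5+36-27=14$, in agreement with Table~\ref{tab:example-degrees-boundary}.

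The main step, and the one we expect to be the obstacle, is the unconditional existence of a cuspidal, conjugate self-dual $\Pi$ on $\GL_2(\A_E)$ with the required properties:
\begin{itemize}
\item sign ${\rm As}^-$, i.e.\ $\Pi$ is $\varepsilon_{E/F}$-distinguished, equivalently $\Pi\otimes\eta_{E/F}^{-1}$ is $\triv$-distinguished;
\item exactly the archimedean parameters above.
\end{itemize}
The difficulty is the complex place of $F$, where the components are not in the discrete series, so pseudo-coefficient/simple trace formula arguments do not apply directly. Following Clozel's ideas in \cite{clozelSLN}, we would first try automorphic induction. Choose a quadratic extension $K/E$ that is Galois over $F$ with $\mathrm{Gal}(K/F)$ biquadratic, for instance $K=E(\sqrt{d})$ with $d\in F$. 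Then take an algebraic Hecke character $\chi$ of $K$ with prescribed infinity type, so that $\Pi=AI_{K/E}(\chi)$ has the correct $a_i$. Cuspidality holds when $\chi\neq\chi\circ\mathrm{Gal}(K/E)$. Conjugate self-duality and the Asai sign can be arranged by imposing $\chi^{c}\chi=$ a suitable extension of $\eta_{E/F}$-type characters, using \S\ref{sect:nf}. If the Asai sign comes out wrong, we would fix it by twisting with $\eta_{E/F}$. Alternatively one could globalize via base change from $\GL_2(\A_F)$ twisted by $\eta_{E/F}$.

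We then descend $\Pi$ to $U(V_2,h)$ by \cite{mok}, which suffices since $U(V_2,h)$ is quasi-split. We need the member of the global packet with the prescribed discrete series at $\vv_\R$ to be automorphic. For a generic elliptic parameter of rank $2$ this follows from the multiplicity formula: choose the generic member at finite places and adjust the character at $\vv_\R$, possibly by adding a finite place where the packet has two elements. That $\sigma_\infty$ is cohomological then follows from Thm.~\ref{thm:lift} and the matching above.

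With $\tau=\Pi$ and $\sigma$ in hand, all hypotheses of Thm.~\ref{thm:main}(2) hold at $\lambda_0=\gamma_2$. This gives non-zero residual classes in degree $14$. Finally, the remark on non-zero restriction to boundary strata following Thm.~\ref{thm:main} shows that these classes are not in inner cohomology.
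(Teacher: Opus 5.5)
Your architecture is the paper's, and your degree count $q(\pi_{\lambda_0})=5+36-27=14$ is correct. You take $\tau=\Pi=BC(\sigma)$ cuspidal on $\GL_2(\A_E)$, realize $\Pi$ by automorphic induction from a quadratic $K/E$ with $K/F$ biquadratic, descend to $U(V_2,h)$ via \cite{mok}, and apply Thm.~\ref{thm:main}(2).

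The gap is the step you treat as routine, ``take an algebraic Hecke character $\chi$ of $K$ with prescribed infinity type''. You need $\chi_u=z^{3/2}\bar z^{-3/2}$ and $\chi_{u'}=z^{-3/2}\bar z^{3/2}$ at the two places $u,u'$ of $K$ above each of the three complex places $\ww$ of $E$. These have no $|z|^{it}$-part, so $\chi\|\cdot\|_K^{1/2}$ would be of type $A_0$. By Weil's description of such characters, its infinity type depends only on the restriction of embeddings to the maximal CM subfield $K_{cm}$ of $K$.

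Here $E=\Q(\sqrt[3]{2},\sqrt{-1})$ is not CM and $F$ is not totally real. One checks that $K=E(\sqrt d)$ has a CM subfield not contained in $E$ if and only if $d\in\Q^\times F^{\times 2}$, i.e.\ iff $K=EL_1$ for an imaginary quadratic $L_1\neq\Q(\sqrt{-1})$. For every other $d$ one has $K_{cm}=\Q(\sqrt{-1})\subset E$. Then the embeddings at $u$ and $u'$ restrict to the same embedding of $K_{cm}$, so $\chi_u=\chi_{u'}$ and $\Pi_\ww\cong\chi_u\times\chi_u$. This is never $z^{3/2}\bar z^{-3/2}\times z^{-3/2}\bar z^{3/2}$, and finite-order twists cannot help since they are trivial at complex places.

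So the existence of $\chi$ is not automatic; it is the actual content of the paper's proof. The missing idea is to take $K=EL_1$ with $L_1$ imaginary quadratic, on which the generator $\theta$ of $\mathrm{Gal}(K/E)$ acts as complex conjugation, and to pull a character back from $L_1$. The paper uses $L_1=\Q(\sqrt{-2})$ and a conjugate self-dual Hecke character $\chi_1$ of $L_1$ with $\chi_{1,\infty}(z)=z^{3/2}\bar z^{-3/2}$; this is the cube of a normalized CM-elliptic-curve character, modified at one prime. It then sets $\chi=\chi_1\circ N_{K/L_1}$. This single choice gives the right archimedean components and the symmetries ${}^\theta\chi=\chi^{-1}\neq\chi$ and ${}^{\widetilde c}\chi=\chi$. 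These yield cuspidality, $\Pi^\vee\cong\Pi$ and ${}^c\Pi\cong\Pi$ at once.

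Several fallbacks in your sketch also fail.
\begin{itemize}
\item The condition ``$\chi^c\chi=$ a suitable extension of $\eta_{E/F}$-type characters'' is not the right criterion. Conjugate self-duality of $\operatorname{AI}_{K/E}(\chi)$ means ${}^{\widetilde c}\chi\in\{\chi^{-1},({}^\theta\chi)^{-1}\}$ for a lift $\widetilde c$ of $c$.
\item Twisting by $\eta_{E/F}$ to correct the Asai sign is unavailable. By \S\ref{sect:nf} it shifts every archimedean exponent by some $u\in\tfrac12+\Z$, destroying the parameters $\pm\tfrac32$ forced by $w$.
\item Your alternative $BC_{E/F}(\pi_0)\otimes\eta_{E/F}$ fails for the same kind of reason. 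The two places over $\vv_\C$ receive the twists $\eta_{E/F,\ww_1}^{\pm1}$, and matching both with $z^{3/2}\bar z^{-3/2}\times z^{-3/2}\bar z^{3/2}$ forces $\eta_{E/F,\ww_1}=1$.
\end{itemize}
The sign correction is in any case unnecessary. At the inert real place, $z^{3/2}\bar z^{-3/2}\oplus z^{-3/2}\bar z^{3/2}$ is a sum of two distinct conjugate-symplectic characters and admits no conjugate-orthogonal form. So the archimedean data already force the sign for which $L^T(s,\Pi,{\rm As}^-)$ has the pole, as descent requires.

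Finally, your packet adjustment at $\vv_\R$ is superfluous. The global component group of a cuspidal (simple) parameter is trivial, and either discrete series of $U(1,1)$ with this infinitesimal character is cohomological for $\EE_{\mu_w}$. That is why the paper simply replaces $\sigma_\infty$.
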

\begin{proof}
By our recipe from Lem.\ \ref{lem:Kostant},  the pair $(I,J)$, given by
$$
        I=\{3,6\},
        \qquad
        J=\{1,4\}.
$$
determines $w_0$. We let $\sigma_\infty$ be a fixed cohomological tempered representation of
$$
        U(V_2,h)_\infty
        \cong
        U(1,1)\times \GL_2(\mathbb C)
$$
determined by the complement $\{2,5\}$ of this Kostant datum. More precisely, this means that, since $\mu=0$ in our example, we choose a discrete series representation of $U(1,1)$, which is cohomological with respect to the irreducible finite dimensional representation of highest weight $(1,-1)$ (for the very moment it is not relevant wether it is holomorphic or anti-holomorphic, which will become important only later) and form its tensor product with the (unique) tempered representation of $\GL_2(\C)$, which is cohomological with respect to the irreducible finite dimensional representation of highest weight $((1,-1),(1,-1))$. As above, we write $BC(\sigma_\infty)$ for its local archimedean base-change. Explicitly, it factors as the product 
\begin{equation}\label{BCok}
BC(\sigma_\infty)\cong \bigotimes_{\ww\in\Sigma_\infty}
        \left(z^{3/2}\bar z^{-3/2}\right)
        \times
        \left(z^{-3/2}\bar z^{3/2}\right)
\end{equation}
of three isomorphic fully induced representations of three copies of $\GL_2(\mathbb C)$. It follows from the particular choice of \(w\), that the representation 
$$
        \pi_\infty
        :=
        \tau_\infty\widehat\otimes\sigma_\infty
        \quad\text{with}\quad
        \tau_\infty:= BC(\sigma_\infty).
$$
is $(\m_{2,\infty}, K^\circ_{M_{2,\infty}})$-cohomological with respect to $\EE_{\mu_w}|_{M^\circ_{2,\infty}}$.\\\\ 
We now globalize $\tau_\infty$ and $\sigma_\infty$ to cuspidal automorphic representations $\tau$, respectively $\sigma$, which satisfy all the conditions of Thm.\ \ref{thm:main} for $\lambda_0=\gamma_2$. Once this is achieved, we obtain the desired conclusion of Thm.\ \ref{prop:nonempty-P2-example} by applying the main result Thm.\ \ref{thm:main} to the associate class $\varphi_{P_2}$ defined by $\pi_{\gamma_2}:=(\tau\hat\otimes\sigma)\otimes e^{\langle \gamma_2, H_{P_2}(\cdot)\rangle}$ and inspecting Table \ref{tab:example-degrees-boundary} for the explicit numerology concerning, e.g., $q(\pi_{\lambda_0})$.\\\\
To this end, let $L_1=\mathbb Q(\sqrt{-2})$. Denote by $c_1$ the only non-trivial automorphism of $L_1$ (i.e., complex conjugation). Let $\varepsilon_{L_1/\mathbb Q}: \mathbb Q^*\backslash\mathbb A_\mathbb Q^*\rightarrow\C^*$ be the quadratic Hecke character of attached to
\(L_1/\mathbb Q\) by class field theory and let $\eta_{L_1/\mathbb Q}: L_1^*\backslash \A_{L_1}^*\rightarrow\C^*$ by a ($c_1$-)conjugate self-dual extension of $\varepsilon_{L_1/\mathbb Q}$ as in Sect.\ \ref{sect:nf}. More concretely, one may pin down $\eta_{L_1/\mathbb Q}$ using an elliptic curve over
\(\mathbb Q\) with complex multiplication by \(\mathcal O_{L_1}\) and letting \(\psi\) be the algebraic Hecke character of \(L_1\) attached to such a CM elliptic curve by Deuring's construction, see, e.g., \cite[Chp.\ II, \S\S 9--10]{Sil94}. Then put $\eta_{L_1/\mathbb Q}
        :=
        \psi\,\|\cdot\|_{\mathbb A_{L_1}}^{-1/2}$. It follows that one has
$$
        \eta_{L_1/\mathbb Q,\infty}(z)
        =
        z^{1/2}\bar z^{-1/2}.
$$
Now define $\chi_{1,0}
        :=
        \eta_{L_1/\mathbb Q}^{\,3}.$ Then,
$$
        \chi_{1,0,\infty}(z)
        =
        z^{3/2}\bar z^{-3/2},
$$
while still
$$
        \chi_{1,0}
        \big|_{\mathbb A_\mathbb Q^\times}
        =
        \varepsilon_{L_1/\mathbb Q}^{\,3}
        =
        \varepsilon_{L_1/\mathbb Q},  \quad\quad {\rm and} \quad\quad   {}^{\,c_1} \chi_{1,0}=\chi_{1,0}^{-1}.
$$
We next modify \(\chi_{1,0}\) at one non-archimedean place, in order to obtain local regularity. Since \(L_1\) is linearly disjoint from the normal closure of \(E/\mathbb Q\), Chebotarev's theorem gives a prime number \(p\geq 3\), away from the finite conductor of \(\chi_{1,0}\), such that \(p\) splits completely in \(E\) and is inert in \(L_1\). Let $w_p$ be the unique place of \(L_1\) above \(p\) and choose a character
$$
        \beta:L_{1,w_p}^{*}\longrightarrow\mathbb C^{*}
$$
of odd order $b_\beta$ satisfying
$$
        \beta|_{\mathbb Q_p^*}=1 \quad\quad {\rm and}\quad\quad      \bigl(\chi_{1,0,w_p}\beta\bigr)^2\neq 1.
$$
Such a character exists because the unramified quotient $ L_{1,w_p}^{*}/\mathbb Q_p^*$ has arbitrarily ramified characters of \(p\)-power order. Since \(b_\beta\) is odd, choose
\(r\in\mathbb Z\) with $2r\equiv 1\pmod{b_\beta}$ and put
$$
        \alpha_{w_p}:=\beta^r.
$$
Then,
$$
        \alpha_{w_p}^2=\beta,
        \qquad
        \alpha_{w_p}|_{\mathbb Q_p^*}=1.
$$
By the Grunwald--Wang theorem (recall that $b_\beta$ and $p$ are odd), there exists hence a finite order Hecke character
$$
        \xi_0:L_1^*\backslash \mathbb A_{L_1}^{*}
        \longrightarrow \mathbb C^*
$$
such that
$$
        (\xi_0)_{w_p}=\alpha_{w_p} .
$$
See Artin--Tate, \cite{ArtinTate}, Chp.~X, Thm.~5, or the formulation in \cite{BCGNT}, Lem.\ 5.3.1.  Define
$$
        \xi:=\xi_0({}^{\,c_1}\xi_0)^{-1}.
$$
Then, $\xi$ is ($c_1$-)conjugate self-dual, i.e., ${}^{\,c_1}\xi=\xi^{-1}$, by construction and $\xi|_{\mathbb A_\mathbb Q^*}=1$, as \(c_1\) acts trivially on \(\mathbb A_\mathbb Q^*\). Moreover, at \(w_p\) we have $\xi_{w_p}=\alpha_{w_p}\,({}^{\,c_1}\alpha_{w_p})^{-1},$
which, since \(\alpha_{w_p}\) is trivial on \(\mathbb Q_p^*\) and therefore ${}^{\,c_1}\alpha_{w_p}=\alpha_{w_p}^{-1}$ (recall that $p$ is inert in $L_1$, whence $c_1$ is the non-trivial automorphism of $L_{1,w_p}/\Q_p$), simplifies to $\xi_{w_p} = \alpha_{w_p}^2 = \beta.$ Set
$$
        \chi_1:=\chi_{1,0}\cdot \xi.
$$
Then, $\chi_1$ is a ($c_1$-)conjugate self-dual Hecke character $L^*_1\backslash \A^*_{L_1}\rightarrow\C^*$ which satisfies
$$
        \chi_{1,\infty}(z)
        =
        z^{3/2}\bar z^{-3/2}, \quad\quad
        \chi_1|_{\mathbb A_\mathbb Q^*}
        =
        \varepsilon_{L_1/\mathbb Q},\quad\quad {\rm and} \quad\quad
        \chi_{1,w_p}^2\neq 1.
$$
Now put $L:=EL_1$ and let $\theta$ be the unique non-trivial element of \(\operatorname{Gal}(L/E)\). Thus, \(\theta\) acts trivially on \(E\) and as complex conjugation on \(L_1\). Let \(\widetilde c\) be the automorphism of \(L\) which acts as the non-trivial element \(c\in\operatorname{Gal}(E/F)\) on \(E\) and trivially on \(L_1\). Then \(\theta\) and \(\widetilde c\) commute. Define a Hecke character of \(L\) by
$$
        \chi:=\chi_1\circ N_{L/L_1}.
$$
Then, the following holds:
$$\boxed{{}^{\theta}\chi=\chi^{-1}\neq\chi \quad\quad {\rm and} \quad\quad {}^{\widetilde c}\chi=\chi}$$ 
Indeed, $N_{L/L_1}(\theta x)
        =
        c_1\bigl(N_{L/L_1}(x)\bigr)$ and since \({}^{c_1}\chi_1=\chi_1^{-1}\), it follows that ${}^\theta\chi=\chi^{-1}$, which shows the first claimed equation of the box. In order to see the inequality, mentioned there, we choose a place \(\ww_p\) of \(E\) above \(p\) and
a place \(u_p\) of \(L\) above \(\ww_p\). Since \(p\) splits completely in \(E\), one has
$$
        E_{\ww_p}\cong \mathbb Q_p,
        \qquad
        L_{u_p} \cong L_{1,w_p},
$$
whence under this identification the local component of
\(\chi=\chi_1\circ N_{L/L_1}\) is $\chi_{u_p}=\chi_{1,w_p}.$ As by construction, $\chi_{1,w_p}^2\neq 1$, 
$$
        {}^\theta\chi_{u_p}=\chi_{u_p}^{-1}\neq \chi_{u_p}.
$$
In particular, globally, $\chi^{-1}\neq \chi$. In order to prove the last claim of the above box, ${}^{\widetilde c}\chi=\chi$, just observe that \(\widetilde c\) acts trivially on \(L_1\), and that the norm \(N_{L/L_1}\) is invariant under automorphisms over \(L_1\).\\\\ With this Hecke character $\chi$ at hand, let
$$
        \Pi:=\operatorname{AI}_{L/E}(\chi)
$$
be the {\it automorphic induction} of $\chi$ to \(\GL_2(\mathbb A_E)\) as it has been established in the form needed here in Chp.\ 3, Thm.\ 6.2 of \cite{arthur-clozel} (see also \cite{henniart12}, Thm.\ 2). Since ${}^\theta\chi\neq\chi$, as we have just observed, \(\Pi\) is a cuspidal automorphic representation of \(\GL_2(\mathbb A_E)\) by  \cite{arthur-clozel}, Lem.\ 6.4 in Chp.\ 3. We claim that $\Pi$ is conjugate self-dual (with respect to $c$): Indeed, on the one hand, as ${}^\theta\chi=\chi^{-1}$,
$$ \Pi^\vee
        \cong
        \operatorname{AI}_{L/E}(\chi^{-1})
        =
        \operatorname{AI}_{L/E}({}^\theta\chi)
        \cong
        \Pi$$
but also, since  \( {}^{\widetilde c}\chi=\chi\), 
$$
        {}^c\Pi
        \cong
        \operatorname{AI}_{L/E}({}^{\widetilde c}\chi)
        =
        \operatorname{AI}_{L/E}(\chi)
      =
        \Pi
$$
whence in summary
$$
        {}^c\Pi\cong\Pi^\vee
$$
as claimed. At the archimedean places, the local extension \(L/E\) is split, and
automorphic induction gives the sum of the two characters obtained from
\(\chi_1\) and \({}^c\chi_1\). Therefore,
$$
        \Pi_\infty\cong \bigotimes_{\ww\in\Sigma_\infty}
        \left(z^{3/2}\bar z^{-3/2}\right)
        \times
        \left(z^{-3/2}\bar z^{3/2}\right),
$$
whence by \eqref{BCok} $\Pi_\infty\cong BC(\sigma_\infty)$ and we set as the first of our two desired globalizations
$$
\tau:=\Pi.       
$$
In order for this choice of a cuspidal automorphic representation $\tau$ of \(\GL_2(\mathbb A_E)\) to be suited to be the \(\GL_2(\mathbb A_E)\) -factor of a unitary cuspidal automorphic rerpesentation $\pi=\tau\hat\otimes\sigma$ of $L_2(\A_F)$, which matches all the conditions of Thm.\ \ref{thm:main} in the case of $\lambda_0=\gamma_2$, it is therefore now necessary and sufficient to check that $\tau$ descends to a cuspidal automorphic representation of $U(V_2,h)(\A_F)$ (and then to let $\sigma$ be this descent): In fact, if $\tau=BC(\sigma')$ with $\sigma'$ a cuspidal automorphic representation of $U(V_2,h)(\A_F)$, then the archimedean component $\sigma'_\infty$ is a tempered cohomological representation of $U(V_2,h)_\infty\cong U(1,1)\times \GL_2(\C)$ of the right highest weight determined by the fixed Kostant representative $w=(w_0,(w_0,w_0))$, as so was its local base change $\tau_\infty$. At the cost of changing the cohomological discrete series representation at the $U(1,1)$-factor of our previously fixed representation $\sigma_\infty$ from the beginning of this proof, we may assume that $\sigma_\infty=\sigma'_\infty$ and hence set $\sigma:=\sigma'$ globally. \\\\ 
We are hence left to show that such a cuspidal automorphic representation $\sigma'$ exists. For that, we first observe that by \cite{mok}, Cor.\ 2.5.9, $L^T(s,\Pi,\operatorname{As}^{-})$ has a pole at $s=1$, for a sufficiently big finite set of places $T$ of $F$. Therefore, $\Pi$ descends to $U(V_2,h)(\A_F)$ by \cite{mok}, Thm.\ 2.5.4, (see also his Rem.\ 2.5.5. Alternatively, the reader may have a look at \cite{zou}, Thm.\ 2.1 and Thm.\ 2.6 {\it ibidem}) i.e., there is an irreducible square-integrable automorphic representation $\sigma'$ of $U(V_2,h)(\A_F)$, whose stable base change is $\Pi=BC(\sigma')$. Such a representation $\sigma'$ is cuspidal as desired: Indeed, as the local Weil representation $\operatorname{Ind}_{W_{L_{u_p}}}^{W_{E_{\ww_p}}}\chi_{u_p}$ is irreducible, the local component $\Pi_{\ww_p}$, being the local automorphic induction, is supercuspidal. Consequently, since at a split non-archimedean place \(\vv_p\) of \(F\) below \(\ww_p\), we have $U(V_2,h)(F_{\vv_p})\cong \GL_2(F_{\vv_p})$, and hence local base change is the identity, the local component \(\sigma'_{\vv_p}\) is supercuspidal. However, a non-cuspidal square-integrable automorphic representation cannot have a tempered, and hence {\it a fortiori} supercuspidal, local component, cf.\ \cite{clozel2}, Prop.\ 4.10. Therefore \(\sigma'\) is cuspidal.  
\end{proof}


\begin{thebibliography}{99}

\bibitem[AGIKMS26]{AGIKMS} H. Atobe, W. T. Gan, A. Ichino, T. Kaletha, A. Mínguez, S. W. Shin, {\it Local Intertwining Relations and Co-tempered A-packets of Classical Groups}, arXiv:2410.13504v3 preprint (2026)


\bibitem[Art13]{arthur} J. Arthur, {\it The Endoscopic Classification of Representations - Orthogonal and Symplectic Groups}, Colloquium Publications {\bf 61} (AMS, 2013)

\bibitem[Art-Clo89]{arthur-clozel} J. Arthur, L. Clozel, {\it Simple algebras, base change, and the advanced theory of the trace formula}, Ann. Math. Studies, {\bf 120} (Princeton University Press, 1989)

\bibitem[AT09]{ArtinTate} E. Artin, J. Tate, \emph{Class Field Theory}, AMS Chelsea Publishing, (2009)

\bibitem[Baj-Cav24]{bajcav}, J. Bajpai, M. Cavicchi, {\it Relative Lie algebra cohomology of $SU(2,1)$ and Eisenstein classes on Picard surfaces}, arXiv:2402.00757v1 preprint (2024)

\bibitem[Bel-Che09]{bel-chen} J. Bella\"iche, G. Chenevier, Families of Galois representations and Selmer groups, {\it Ast\'erisque} {\bf 324} (2009)

\bibitem[Ber-VdG22]{Ber-vdG22} J. Bergström, G. van der Geer, Picard modular forms and the cohomology of local systems on a Picard modular surface, {\it Comment. Math. Helv.} {\bf 97} (2022) 305--381

\bibitem[Ber-Lap24]{BL} J. Bernstein, E. Lapid, On the meromorphic continuation of Eisenstein series, {\it J. Amer. Math. Soc.} {\bf 37} (2024) 187-234

\bibitem[Bor69]{borelbook} A. Borel, {\it Introduction aux groupes arithm\'etiques}, (Hermann, 1969)

\bibitem[Bor63]{bor1} A. Borel, Some finiteness properties of adele groups over number fields, {\it Publ. Math. IHES} {\bf 16} (1963) pp. 5--30

\bibitem[Bor-Cas83]{bocas} A. Borel, W. Casselman, $L^2$-Cohomology of locally symmetric manifolds of finite volume, {\it Duke Math. J.} \textbf{50} (1983) pp. 625-647

\bibitem[Bor-HCh62]{borelhch} A. Borel, Harish-Chandra, Arithmetic subgroups of algebraic groups, {\it Ann. Math.} {\bf 75} (1962) 485--535

\bibitem[Bor-Jac79]{bojac} A. Borel, H. Jacquet, {\it Automorphic forms and automorphic representations}, in: Proc. Sympos. Pure Math., Vol. XXXIII, part I, AMS, Providence, R.I., (1979), pp. 189-202

\bibitem[Bor-Ser73]{boserre} A. Borel, J. P. Serre, Corners and arithmetic groups, {\it Comm. Math. Helvet.} \textbf{48} (1973) 436--491

\bibitem[Bor-Wal00]{bowa} A. Borel, N. Wallach, {\it Continuous cohomology, discrete subgroups and representations of reductive groups}, Ann. of Math. Studies {\bf 94}, (Princeton Univ. Press, New Jersey, 2000)

\bibitem[BCGNT25]{BCGNT} G. Boxer, F. Calegari, T. Gee, J. Newton, J. Thorne, The Ramanujan and Sato--Tate conjectures for Bianchi modular forms, {\it Forum Math. Pi} \textbf{13} (2025)

\bibitem[Car12]{car} A. Caraiani, Local-global compatibility and the action of monodromy on nearby cycles, {\it Duke Math. J.} {\bf 161} (2012) 2311--2413

\bibitem[Che-Zou24]{zou} R. Chen, J. Zou, Arthur's multiplicity formula for even orthogonal and unitary groups, {\it JEMS} {\bf 27} (2024) 4769--4843

\bibitem[Clo93]{clozel2} L. Clozel, On the cohomology of Kottwitz's arithmetic varieties, {\it Duke Math. J.} {\bf 72} (1993) 757--795

\bibitem[Clo87]{clozelSLN} L. Clozel, On the cuspidal cohomology of arithmetic subgroups of \(\text{SL}(2n)\) and the first Betti number of arithmetic $3$-manifolds, {\it Duke Math. J.} {\bf 55} (1987) 475--486 

\bibitem[Clo91]{clozelihes} L. Clozel, Repr\'esentations galoisiennes associ\'ees aux repr\'esentations automorphes autoduales de $GL(n)$, {\it Publ. IHES} {\bf 73} (1991) 97--145

\bibitem[CKT26]{CKT} L. Clozel, A. Kret, O. Ta\"ibi, {\it Invariance Galoisienne des z\'eros centraux de fonctions L} with an appedix by O. Ta\"ibi and J.-L. Waldspurger,  arXiv:2602.09511v1 preprint (2026)

\bibitem[Cog07]{cogdel} J. W. Cogdell, {\it $L$-functions and Converse Theorems for ${\rm GL_n}$}, in: {\it Automorphic Forms and Applications}, IAS/Park City mathematics series, vol. 12, eds. P. Sarnak and F. Shahidi, AMS, Providence, R.I., (2007), pp. 95--177

\bibitem[Cog-PS-Sha11]{ckpssh} J. W. Cogdell, I. I. Piatetski-Shapiro, F. Shahidi, {\it Functoriality for the Quasisplit Classical Groups}, in: {\it On Certain L-functions}, Clay Mathematics Proceedings, vol. 13, eds. J. Arthur, J. W. Cogdell, S. Gelbart, D. Goldberg, D. Ramakrishnan, J.-K. Yu, (West Lafayette, IN, 2007) AMS, Providence, RI, 2011, pp. 117--140


\bibitem[Enr79]{enright} T. J. Enright, Relative Lie algebra cohomology and unitary representations of complex Lie groups, {\it Duke Math. J.} {\bf 46} (1979) 513--525

\bibitem[Fli-Zin95]{flickerzinoviev} Y. Z. Flicker, D. Zinoviev, On Poles of Twisted Tensor L-functions, {\it Proc. Japan Acad.} {\bf 71} (1995) 114--116

\bibitem[Fra08]{franke2} J. Franke, {\it A topological model for some summand of the Eisenstein cohomology of congruence subgroups}, in: {\it Eisenstein Series and Applications}, W.T. Gan, S.S. Kudla, Y. Tschinkel eds., {\it Progr. Math.} {\bf 258}, Birkh\"{a}user Boston, Boston, 2008, pp. 27--85


\bibitem[Fra-Sch98]{schwfr} J. Franke, J. Schwermer, A decomposition of spaces of automorphic forms, and the Eisenstein cohomology of arithmetic groups, {\it Math. Ann.}, \textbf{311} (1998), pp. 765-790


\bibitem[GGPW12]{ggp} W. T. Gan, B. H. Gross, D. Prasad, J.-L. Waldspurger, Sur les conjectures de Gross et Prasad, {\it Ast\'erisque} {\bf 346} (2012)


\bibitem[Got-Gro13]{gotsgro} G. Gotsbacher, H. Grobner, On the Eisenstein cohomology of odd orthogonal groups, {\it Forum Math.} {\bf 25} (2013) 283 --311

\bibitem[Grb-Sch21]{grabschwun} N. Grbac, J. Schwermer, Eisenstein series for rank one unitary groups and some cohomological applications, {\it Advances Math.} {\bf 376} (2021) Art. No. 107438

\bibitem[Grb-Sha15]{grbacshahidi} N. Grbac, F. Shahidi, Endoscopic transfer for unitary groups and holomorphy of Asai $L$-functions, {\it Pacific J. Math.} {\bf 276} (2015) 185--212 


\bibitem[Gro23]{grob_book} H. Grobner, {\it Smooth-automorphic forms and smooth-automorphic representations}, Series on Number Theory and Its Applications, {\bf 17} (World Scientific Pbl., 2023)


\bibitem[Gro10]{grob22} H. Grobner, Regular and Residual Eisenstein Series and the Automorphic Cohomology of $Sp(2,2)$, {\it Compos. Math.} {\bf 146} (2010) 21--57

\bibitem[Gro13]{grobner-EisRes} H. Grobner, Residues of Eisenstein series and the automorphic cohomology of reductive groups, {\it Compos. Math.} {\bf 149} (2013) 1061--1090


\bibitem[Gro-Har-Lin25]{GHL} H. Grobner, M. Harris, J. Lin, {\it Factorization of periods, construction of automorphic motives and Deligne's conjecture over CM-fields},  	arXiv:2509.02303 preprint (2025)


\bibitem[Gro-Rag14]{grob-ragh} H. Grobner, A. Raghuram, On some arithmetic properties of automorphic forms of $GL_m$ over a division algebra, {\it Int. J. Number Theory} {\bf 10} (2014) pp. 963--1013


\bibitem[Gro-\v Zun24]{grob_zun} H. Grobner, S. \v Zunar, On the notion of the parabolic and the cuspidal support of smooth-automorphic forms and smooth-automorphic representations, {\it Monatshefte Math.} {\bf 204} (2024) 455 -- 500 


\bibitem[Hrd87]{harderGU} G. Harder, Eisensteinkohomologie für Gruppen vom Typ $GU(2,1)$, {\it Math. Ann.} {\bf 278} (1987) 563--592


\bibitem[Har-Lab04]{harris-labesse} M. Harris, J.-P. Labesse, Conditional Base Change for Unitary Groups, {\it Asian J. Math.} {\bf 8} (2004) 653--684


\bibitem[Har-Tay01]{HT} M. Harris, R. Taylor, {\it The geometry and cohomology of some simple Shimura varieties}, Ann. of Math. Studies {\bf 151}, (Princeton Univ. Press, New Jersey, 2001)


\bibitem[Hay-Sch05]{hayataschwermer} T. Hayata, J. Schwermer, On arithmetic subgroups of a $Q$-rank 2 form of $SU(2,2)$ and their automorphic cohomology, {\it J. Math. Soc. Japan} {\bf 57} (2005) 357--385 


\bibitem[Hen10]{henniartAsai} G. Henniart, Correspondance de Langlands et fonctions L des carr\'es ext\'erieur et sym\'etrique, {\it Int. Math. Res. Not.}  {\bf 4} (2010) 633--673

\bibitem[Hen12]{henniart12} G. Henniart, Induction automorphe globale pour les corps des nombres, {\it Bull. Soc. math. France} {\bf 140} (2012) 1--17

\bibitem[Hen00]{henniart} G. Henniart, Une preuve simple des conjectures de Langlands pour $GL(n)$ sur un corps $p$-adique, {\it Invent. Math.} \textbf{139} (2000) 439--455

\bibitem[Joh90]{johnson} J. F. Johnson, Stable base change $\C/\R$ of certain derived functor modules, {\it Math. Ann.} {\bf 287} (1990) 467--493

\bibitem[KMSW14]{KMSW} T. Kaletha, A. Minguez, S. W. Shin, P.-J. White, {\it Endoscopic classification of representations: inner forms of unitary groups}, preprint (2014).

\bibitem[Kna86]{knappbased} A. W. Knapp, \emph{Representation Theory of Semisimple Groups - An Overview Based On Examples}, Princeton Univ. Press, (1986)


\bibitem[Kna-Vog95]{knappvogan} A. W. Knapp, D. A. Vogan Jr. \emph{Cohomological induction and unitary representations}, Princeton Univ. Press, (1995)

\bibitem[Kim-Kri04]{kim-krish04} H. Kim, M. Krishnamurthy, Base change lift for odd unitary groups. Functional analysis VIII, {\it Various Publ. Ser. (Aarhus)}, {\bf 47} (Aarhus Univ., Aarhus, 2004), 116--125

\bibitem[Kim-Kri05]{kim-krish05} H. Kim, M. Krishnamurthy, Stable base change lift from unitary groups to $\GL_n$, {\it Int. Math. Res. Pap.} {\bf 1} (2005) 1--52

\bibitem[Kos61]{kostant} B. Kostant, Lie algebra cohomology and the generalized Borel-Weil theorem, {\it Ann. Math.} \textbf{74} (1961) 329--387


\bibitem[Kud94]{kudla} S. S. Kudla, {\it Local Langlands correspondence: the Non-Archimedean Case}, in: {\it Motives}, Proc. Sympos. Pure Math., Vol. LV, Part II, Amer. Math. Soc. (1994) 365--392

\bibitem[Lab11]{lab}   J.-P. Labesse, {\it Changement de base CM et s\'eries discr\`etes}, in: {\it On the Stabilization of the Trace Formula}, Vol.\ I, eds.\ L.\ Clozel, M.\ Harris, J.-P.\ Labesse, B.-C.\ Ng\^ o, International Press, Boston, MA, 2011, pp. 429--470


\bibitem[Lan76]{langlandsLNM} R. P. Langlands, {\it On the Functional Equations Satisified by Eisenstein Series},  LNM {\bf 544} (1976)


\bibitem[Man-Ome-Yan26]{MOY} N. Matringe, O. Offen, C. Yang, {\it Intertwining periods, L-functions and local-global principles for distinction of automorphic representations}, arXiv:2509.00441v2 preprint (2026)


\bibitem[M\oe-Wal89]{MW}  C. M\oe glin, J.-L. Waldspurger, Le spectre r\'esiduel de GL(n),  {\it Ann. Sci. \'Ecole Norm. Sup.} {\bf 22} (1989) 605--674.

\bibitem[M\oe-Wal95]{moewal} C. M\oe glin, J.-L. Waldspurger, {\it Spectral decomposition and Eisenstein series}, Cambridge Univ. Press (1995)

\bibitem[Mok15]{mok}  C. P. Mok, {\it Endoscopic classification of representations of quasi-split unitary groups}, Memoirs of the AMS {\bf 235} (2015).

\bibitem[Mor10]{morel} S. Morel, {\it On the cohomology of certain non-compact Shimura varieties}, Ann. Math. Studies {\bf 173} (Princeton Univ. Press, New Jersey, 2010)

\bibitem[Mun26]{mundy} S. Mundy, {\it A vanishing theorem for residual Eisenstein cohomology}, arXiv:2603.12472v1 preprint (2026)

\bibitem[Nai-Pra21]{nairprasad} A. N. Nair, D. Prasad, Cohomological representations for real reductive groups, {\it J. London Math. Soc.} {\bf 104} (2021) 1515--1571

\bibitem[Nai-Rai23]{nairrai} A. N. Nair, A. Rai, Automorphic Lefschetz properties for noncompact arithmetic manifolds, {\it J. Inst. Math. Jussieu} {\bf 22} (2023) 1655--1702


\bibitem[Roh96]{rohlfs} J. Rohlfs, Projective limits of locally symmetric spaces and cohomology, {\it J. Reine Angew. Math.} \textbf{479} (1996) pp. 149--182

\bibitem[Roh-Spe11]{rosp} J. Rohlfs, B. Speh, {\it Pseudo Eisenstein Forms and the Cohomology of Arithmetic Groups III: Residual Cohomology Classes}, in: {\it On certain $L$-functions: Conference in Honor of Freydoon Shahidi}, eds.\ J.\ Arthur, J.\ W.\ Codgell, S.\ Gelbart, D.\ Goldberg, D.\ Ramakrishnan, J.-K.\ Yu, AMS, Providence, R.I., (2011), pp. 501--524


\bibitem[Sal98]{salamribsu} S. Salamanca-Riba, On the unitary dual of real reductive Lie groups and the $A_\q(\lambda)$ modules: The strongly regular case, {\it Duke Math. J.} {\bf 96} (1998) 521--546


\bibitem[Ser79]{serrearith} J.-P. Serre, {\it Arithmetic Groups}, in: {\it Homological Group Theory} London Math. Soc. Lecture Note Series {\bf 36} (Cambridge University Press, 1979)


\bibitem[Shi14]{shin} S. W. Shin, On the cohomological base change for unitary similitude groups, appendix to: W. Goldring, {\it Galois representations associated to holomorphic limits of discrete series I: Unitary Groups}, {\it Compos. Math.} {\bf 150} (2014) 191--228

\bibitem[Sha88]{shahidi2} F. Shahidi, On the Ramanujan conjecture and finiteness of poles for certain $L$-functions, {\it Ann. Math.} \textbf{127} (1988) 547--584

\bibitem[Shl74]{shal} J. A. Shalika, The multiplicity one theorem for $GL_n$, {\it Ann. Math.} \textbf{100} (1974) 171--193

\bibitem[Sil94]{Sil94} J. H. Silverman, \emph{Advanced Topics in the Arithmetic of Elliptic Curves}, Graduate Texts in Mathematics, {\bf 151} (1994)

\bibitem[Spe83]{speh} B. Speh, Unitary representations of $GL(n,\R)$ with nontrivial $(\g,K)$-cohomology, {\it Invent. Math.} {\bf 71} (1983) 443--465

\bibitem[Spe81]{speh81} B. Speh, {\it Unitary representations of SL$(\R)$ and the cohomology of congruence subgroups}, in: {\it Non Commutative Harmonic Analysis and Lie Groups}, Lecture Notes in Mathematics {\bf 880} (1981) 483--505


\bibitem[Vog97]{voganupq}, D. A. Vogan Jr., {\it Cohomology and group representations}, in: Proc. Sympos. Pure Math., Vol. LXI, part I, AMS, Providence, R.I., (1997), pp. 219--243

\bibitem[Vog84]{voganunit} D. A. Vogan Jr., Unitarizability of certain series of representations, {\it Ann. of Math.} {\bf 120} (1984) 51-90


\bibitem[Vog-Zuc84]{vozu} D. A. Vogan Jr., G. J. Zuckerman, Unitary representations with nonzero cohomology, {\it Comp. Math.} \textbf{53} (1984) 51--90


\bibitem[Wal84]{wallach} N. Wallach, {\it On the constant term of a square-integrable automorphic form}, in: {\it Operator algebras and group representations}, Vol. II (Neptun, 1980), {\it Monographs Stud. Math.} \textbf{18} (1984), pp. 227-237


\end{thebibliography}
\end{document}